\newtheorem{theo}{Theorem}[section]
\newtheorem{cor}[theo]{Corollary}
\newtheorem{lem}[theo]{Lemma}
\theoremstyle{definition}
\newtheorem{defin}[theo]{Definition}%[section]
\newtheorem*{lem*}{Lemma}
\newtheorem{rem}[theo]{Remark}
\newtheorem*{cor*}{Corollary}
\newtheorem*{theo*}{Theorem}
\newcommand{\norm}[1]{\lVert#1\rVert}
\def\Ex#1{[\![#1]\!]_h}
\def\Exx#1{[\![#1]\!]_{\bar h}}
\DeclareMathOperator*{\esssup}{ess\,sup}
\def\XXint#1#2#3{{\setbox0=\hbox{$#1{#2#3}{\int}$ }
\vcenter{\hbox{$#2#3$ }}\kern-.6\wd0}}
\def\XXiint#1#2#3{{\setbox0=\hbox{$#1{#2#3}{\iint}$ }
\vcenter{\hbox{$#2#3$ }}\kern-.55\wd0}}
\renewcommand{\d}{\:\:\!\!\mathrm{d}}
\newcommand{\N}{\ensuremath{\mathbb{N}}}
\newcommand{\R}{\ensuremath{\mathbb{R}}}
\newcommand{\A}{\mathcal{A}}
\newcommand{\wto}{\rightharpoondown}
\renewcommand{\b}{\mathfrak{b}}
\renewcommand{\epsilon}{\varepsilon}
\renewcommand{\rho}{\varrho}
\numberwithin{equation}{section}
\begin{document}
\renewcommand{\refname}{References} 
\renewcommand{\abstractname}{Abstract}

\title[Existence of weak solutions to a DSM equation]{Existence of solutions to a diffusive\\ shallow medium equation}

\date{\today}
\subjclass[2010]{35D30, 35K20, 35K65}
\keywords{existence, shallow medium equation, doubly nonlinear parabolic equations}
 
\makeatother

\author[V. B\"ogelein]{Verena B\"{o}gelein}
\address{Verena B\"ogelein\\
Fachbereich Mathematik, Universit\"at Salzburg\\
Hellbrunner Str. 34, 5020 Salzburg, Austria}
\email{verena.boegelein@sbg.ac.at}

\author[N. Dietrich]{Nicolas Dietrich}
\address{Nicolas Dietrich\\
Fachbereich Mathematik, Universit\"at Salzburg\\
Hellbrunner Str. 34, 5020 Salzburg, Austria}
\email{nicolas.dietrich@stud.sbg.ac.at}

\author[M. Vestberg]{Matias Vestberg}
\address{Matias Vestberg\\
Department of Mathematics, Aalto University\\
P.~O.~Box 11100, FI-00076 Aalto University, Finland}
\email{matias.vestberg@aalto.fi}

\begin{abstract}
In this article we establish the existence of weak solutions to the shallow medium equation. We proceed by an approximation argument. First we truncate the coefficients of the equation from above and below. Then we prove convergence of the solutions of the truncated problem to a solution to the original equation. 
\end{abstract}

\maketitle
\tableofcontents

\section{Introduction}

In this work we prove the existence of weak solutions to the Cauchy-Dirichlet problem associated with the doubly nonlinear equation
\begin{align}\label{DNPE}
\partial_t u - \nabla\cdot \big((u-z)^\alpha |\nabla u|^{p-2}\nabla u\big) = f  \quad \text{ in } \Omega_T:=\Omega\times (0,T),
\end{align}
where $\Omega\subset \R^n$ is an open bounded set, $z:\Omega\to \R$ and $f:\Omega_T\to \R$ are given sufficiently regular functions, and the parameters $\alpha$ and $p$ are restricted to the range
$$
p>1, \qquad \text{ and }\qquad  \alpha > 0. 
$$
The solution $u$ is required to satisfy $u\geq z$ so that the factor $(u-z)^\alpha$ is well-defined. The term doubly nonlinear refers to the fact that the diffusion part depends nonlinearly both on the gradient and the solution itself. In this article we consider all $n\geq 2$, although the equation is typically used in models with only two spatial dimensions.

In the case $z=0$, \eqref{DNPE} reduces to a standard doubly nonlinear equation which was studied already in \cite{Ve} and \cite{PoVe}, although using a different notion of weak solution. If we furthermore set $\alpha=0$, we recover the parabolic $p$-Laplace equation, whereas taking $p=2$ we obtain the porous medium equation, after a formal application of the chain rule. Thus, \eqref{DNPE} can be seen as a generalization of many previously studied equations.

Local boundedness of weak solutions to \eqref{DNPE} has been proved in \cite{SiVe} for sufficiently regular $f$ and $z$ in the slow diffusion case $\alpha +p >2$. The article focused on the case $p<2$, but the value of $p$ does not really play a role in the arguments. In \cite{SiVe2} local H\"older continuity was established for $p>2$. Existence of solutions to a Cauchy-Dirichlet problem corresponding to \eqref{DNPE} was proved in the case of trivial topography $z=0$ and slow diffusion $\alpha + p >2$ in \cite{AlSaDa}. See also \cite{AltLu, BoDuMaSc, IMJ, St} which study the existence of solutions to the Cauchy-Dirichlet problem for doubly nonlinear equations with general structure conditions.

In the range $p > 2$, equation \eqref{DNPE} is used to describe the dynamics of glaciers in the so-called shallow ice approximation, see for example Chapter 2 of \cite{Pa} and the classic work \cite{Ma}. Laboratory experiments, theoretical considerations and field measurments suggest that in these applications $p\approx 4$ provides the best description. This corresponds to the value $3$ for the exponent appearing in the flow law for polycrystalline ice which was established in \cite{Ho}. In the range $p<2$, equation \eqref{DNPE} is used to model shallow water dynamics in situations such as floods and dam breaks, see \cite{AlSaDa,FeMo,HrBeFr}. Due to the variety of applications we follow the terminology from \cite{SiVe2} and use the term diffusive shallow medium equation or more concisely, DSM equation, to describe equation \eqref{DNPE} for all $p>1$.

The given function $z$ describes the elevation of the land on top of which the water or ice is moving, measured with respect to some arbitrarily chosen ground level. The value of $u$ represents the height of the medium measured with respect to the same level. This provides a physical motivation for the condition $u\geq z$ which was imposed earlier. Even though $u$ and $z$ depend on the ground level, their difference $v:=u-z$ is invariant. Therefore, it is natural to reformulate the equation in terms of $v$, see \eqref{reformulated} and Definition~\ref{weakdef} of Section \ref{weaksolsect} below. As has been established in \cite{SiVe,SiVe2}, this formulation is also mathematically convenient. 

The right-hand side $f$ is a source term which can account for snowfall in the case of glaciers, and rainfall, evaporation or infiltration in the shallow water setting. Thus, its sign may vary in applications. However, in this paper we restrict ourselves to nonnegative source terms, which is necessary for the comparison principle argument utilized in Lemma \ref{lem:lower-bound-vk}. In physical applications it is reasonable to assume that $f$ is bounded, but our existence result requires only that the source term has sufficiently high integrability, see Section \ref{weaksolsect} for details.

The function $z$ is required to belong to a first order Sobolev space with sufficiently high exponent, see \eqref{assumption:data}. Infact, the exponent is higher than the dimension of $\Omega$, which means that $z$ is locally H\"older continuous. The space of admissible $z$ contains for example all Lipschitz functions, which include the topographies occurring in realistic diffusion models.

Although the shallow medium equation has been studied in the literature, to our knowledge existence of weak solutions has not been established except for the trivial topography $z=0$. Our aim in this article is to close this gap. Our main result, Theorem~\ref{thm:existence}, ensures the existence of weak solutions to the Cauchy-Dirichlet problem of the shallow medium equation \eqref{DNPE}. 

\medskip
 
\noindent
{\bf Acknowledgments.} V.~B\"ogelein has been supported by the FWF-Project P31956-N32
``Doubly nonlinear evolution equations". M.~Vestberg wants to express gratitude to the Academy of Finland.

\section{Setting and main result}\label{weaksolsect}

In the following we explain the precise setting and present the natural definition of weak solutions which was introduced in \cite{SiVe}. 
For the right-hand side $f \colon \Omega_T \to \R_{\ge 0}$ and the known function $z\colon\Omega\to \R$ and the initial datum $\psi\colon\Omega\to\R$ we assume
\begin{align}
\label{assumption:data}
 f\in L^{\sigma p'}(\Omega_T;\R_{\geq 0}), \hspace{7mm} 
 z\in W^{1,\sigma\beta p}(\Omega), \hspace{7mm} 
 \Psi :=\psi-z \in L^\infty(\Omega;\R_{\geq 0}), 
\end{align}
for some $\sigma>\frac{n+p}{p}$ and where $p'=\frac{p}{p-1}$ denotes the H\"older conjugate of $p$. We consider the Cauchy-Dirichlet problem 
\begin{align}\label{CD}
\left\{
\begin{array}{ll}
\partial_t u - \nabla\cdot \big((u-z)^\alpha |\nabla u|^{p-2}\nabla u\big) = f  &\quad \text{ in } \Omega_T,\\[5pt]
u=z & \quad \text{ on } \partial\Omega\times(0,T),\\[5pt]
u(\cdot,0)=\psi & \quad \text{ in } \overline\Omega,
\end{array}
\right.
\end{align}
for $u\colon\Omega_T\to\R$ with $u\ge z$. 
In order to motivate the natural definition of weak solutions, we reformulate \eqref{CD}$_1$ in terms of $v:=u-z$. Formally applying the chain rule as in \cite{SiVe}, we can write equation \eqref{CD}$_1$ in the form
\begin{align}\label{reformulated}
\partial_t v-\nabla \cdot \big(\beta^{1-p}|\nabla v^\beta+\beta v^{\beta-1}\nabla z|^{p-2}(\nabla v^\beta +\beta v^{\beta-1}\nabla z)\big)= f,
\end{align}
where 
\begin{align}\label{def:beta}
\beta:=1 + \frac{\alpha}{p-1}>1.
\end{align}
Throughout the article, we will focus on this form of the equation. In order to simplify the notation, we denote the vector field appearing in the diffusion part of \eqref{reformulated} as
\begin{align}\label{def:A}
A(v,\xi):= \beta^{1-p}|\xi+\beta v^{\beta-1}\nabla z|^{p-2}(\xi +\beta v^{\beta-1}\nabla z),
\end{align}
for $v\ge 0$ and $\xi\in\R^n$, so that the Cauchy-Dirichlet problem \eqref{CD} can be re-written as
\begin{align}\label{CD-v}
\left\{
\begin{array}{ll}
\partial_t v-\nabla \cdot A(v,\nabla v^\beta)= f  &\quad \text{ in } \Omega_T,\\[5pt]
v=0 & \quad \text{ on } \partial\Omega\times(0,T),\\[5pt]
v(\cdot,0)=\psi-z & \quad \text{ in } \overline\Omega,
\end{array}
\right.
\end{align}
We arrive at the following definition of weak solutions by multiplying \eqref{reformulated} by a smooth test function and integrating formally by parts.

\begin{defin}\label{weakdef}
Suppose that $f$, $z$ and $\psi$ satisfy \eqref{assumption:data}. A function $u\colon\Omega_T\to \R $ is a weak solution to the equation \eqref{DNPE} if and only if $v:=u-z$ is non-negative, 
$v^\beta \in L^p(0,T;W^{1,p}(\Omega))$ and 
\begin{align}\label{weakform}
&\iint_{\Omega_T} \big[A(v,\nabla v^\beta)\cdot \nabla \varphi - v\partial_t \varphi\big]\d x\d t=\iint_{\Omega_T} f\varphi \d x\d t,
\end{align}
for all $\varphi \in C^\infty_0(\Omega_T)$. Moreover, in this case we say that $v$ is a solution to \eqref{reformulated}. If in adddition $v\in C([0,T];L^{\beta+1}(\Omega))$, $v^\beta \in L^p(0,T;W^{1,p}_0(\Omega))$ and if $v(0)=\psi -z$, we say that $v$ is a solution to the Cauchy-Dirichlet problem \eqref{CD-v} and $u$ is a solution to the Cauchy-Dirichlet problem \eqref{CD}.
\end{defin}
Now we can state our main existence theorem.

\begin{theo}\label{thm:existence}
Suppose that $f$, $z$ and $\psi$ satisfy \eqref{assumption:data}. Then there exists a weak solution to the Cauchy-Dirichlet problem \eqref{CD} in the sense of Definition~\ref{weakdef}. 
\end{theo}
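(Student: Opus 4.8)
The plan is to construct the solution as a limit of solutions to approximating problems in which the troublesome coefficients are truncated so that the equation becomes uniformly parabolic and non-degenerate. Concretely, for $k\in\N$ I would replace the factor $v^{\beta-1}$ appearing in $A(v,\nabla v^\beta)$ by a bounded, bounded-away-from-zero truncation, e.g. work with $v_k^{\beta-1}$ where $v_k$ is cut off between levels $\tfrac1k$ and $k$, and likewise regularize $\nabla z$ and $f$; one also mollifies the initial datum $\Psi$. For each fixed $k$ the resulting Cauchy--Dirichlet problem is a quasilinear parabolic equation with bounded measurable coefficients satisfying standard $p$-growth and ellipticity, for which existence of a weak solution $v_k$ (with $v_k^\beta\in L^p(0,T;W^{1,p}_0(\Omega))$ and $v_k\in C([0,T];L^{\beta+1}(\Omega))$) follows from the known theory cited in the introduction (\cite{AltLu,BoDuMaSc,IMJ,St}). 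This takes care of the construction step.

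\textbf{A priori bounds and the lower bound.} The next step is to derive $k$-independent estimates. Testing the equation for $v_k$ with $v_k^\beta$ (or rather a Steklov-averaged/time-localized version thereof) and using the structure of $A$ together with Young's inequality and the integrability of $f$ from \eqref{assumption:data} yields uniform bounds for $v_k$ in $L^\infty(0,T;L^{\beta+1}(\Omega))$ and for $\nabla v_k^\beta$ in $L^p(\Omega_T)$; the hypothesis $\sigma>\frac{n+p}{p}$ is exactly what is needed to absorb the $f$- and $\nabla z$-terms via Sobolev embedding, and presumably also to obtain an $L^\infty$-bound on $v_k$ by De Giorgi/Moser iteration as in \cite{SiVe}. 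Crucially one needs a uniform \emph{lower} bound $v_k\ge c>0$ on compact subsets (or a suitable nonnegativity with controlled vanishing) so that the truncation at level $\tfrac1k$ is eventually inactive; this is where the sign condition $f\ge 0$ enters, via the comparison-principle argument announced for Lemma~\ref{lem:lower-bound-vk}. One also needs a bound on $\partial_t v_k$ in some negative Sobolev space to get compactness in time.

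\textbf{Passage to the limit.} With these bounds in hand, I would extract a subsequence such that $v_k\to v$ strongly in $L^{\beta+1}(\Omega_T)$ (and a.e.) using Aubin--Lions type compactness, $v_k^\beta\rightharpoonup v^\beta$ weakly in $L^p(0,T;W^{1,p}_0(\Omega))$, and $A(v_k,\nabla v_k^\beta)\rightharpoonup \chi$ weakly in $L^{p'}(\Omega_T;\R^n)$. The a.e.\ convergence of $v_k$ (hence of $v_k^{\beta-1}\nabla z$ using the higher integrability of $\nabla z$) plus a Minty--Browder monotonicity argument for the $p$-Laplacian-type vector field identifies $\chi=A(v,\nabla v^\beta)$; here the monotonicity of $\xi\mapsto A(v,\xi)$ for fixed $v$ and the strong convergence of the $v$-dependence are the key ingredients. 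Passing to the limit in the weak formulation \eqref{weakform} and checking the initial condition $v(0)=\Psi$ in $C([0,T];L^{\beta+1}(\Omega))$ (using the time-continuity estimate uniform in $k$) then finishes the proof, with $u:=v+z$ the desired solution of \eqref{CD}.

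\textbf{Main obstacle.} I expect the delicate point to be the interplay between the degeneracy as $v\to 0$ and the coupling through $\nabla z$: the vector field $A(v,\xi)$ is not monotone jointly in $(v,\xi)$, so the identification of the weak limit requires genuinely combining the Minty argument in $\xi$ with \emph{strong} (a.e.) convergence of $v_k$ and of $v_k^{\beta-1}$, and controlling the term $\beta v_k^{\beta-1}\nabla z$ uniformly — which is precisely why $z$ is assumed in $W^{1,\sigma\beta p}(\Omega)$ rather than merely $W^{1,p}$. Establishing the uniform lower bound away from zero on the relevant region (Lemma~\ref{lem:lower-bound-vk}) via comparison, and making sure the $L^\infty$ and energy estimates are genuinely independent of $k$, is the technical heart of the argument.
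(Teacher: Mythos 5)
Your overall plan — truncate the $v$-dependence of the vector field, derive $k$-independent estimates, extract a convergent subsequence, and identify the limit as a solution — matches the structure of the paper's proof. There are, however, two points at which the proposal as written would not go through.

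The first concerns the lower bound. You state that one needs a uniform lower bound $v_k\ge c>0$ on compact subsets so that the truncation at level $\tfrac1k$ becomes inactive. This is both unavailable and unnecessary. The equation degenerates as $v\to 0$, and in general the limit $v$ will vanish on large portions of $\Omega_T$ (for $z=0$ and $\Psi$ compactly supported this is the familiar porous-medium/$p$-Laplace picture with finite speed of propagation), so no uniform positive lower bound can hold. What is in fact proved (Lemma~\ref{lem:lower-bound-vk}) is the exact, $k$-dependent bound $v_k\ge\tfrac1k$ a.e.\ in $\Omega_T$ — and this is precisely the threshold at which the truncation $T_k$ acts. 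Together with the $k$-independent upper bound $v_k\le L$ of Lemma~\ref{lem:v_k-unifly-bdd}, one concludes $T_k(v_k)=v_k$ for $k>L$, which is all that is needed. The boundary and initial data $\tfrac1k$ and $\tfrac1k+\Psi$ are chosen expressly so that this $k$-dependent lower bound becomes provable by comparison.

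The second, more serious gap is the compactness step. Standard Aubin--Lions compactness does not apply here because the two available estimates live on different powers of the same function: the spatial estimate is that $v_k^\beta$ is bounded in $L^p(0,T;W^{1,p}(\Omega))$, whereas the time-translate estimate that the equation furnishes is for $v_k$ itself in $\big(W^{1,p}_0(\Omega)\big)'$ (see the H\"older-in-time estimate $\|v_k(\tau)-v_k(\tau+h)\|_{(W^{1,p}_0(\Omega))'}\le c\,h^{1/p}$ in the proof of Lemma~\ref{lem:strong}). Combining these into relative compactness of $(v_k^\beta)$ in $L^p(\Omega_T)$, and hence pointwise a.e.\ convergence of $v_k$, is precisely the content of Section~\ref{sec:compactness}: Theorem~\ref{thm:JS-th5m} and Corollary~\ref{thm:JS-th6m} provide a Simon-type compactness criterion adapted to the power $m=\beta$, bridging the mismatch. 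Without such a lemma the a.e.\ convergence of $v_k$ — essential for passing to the limit in the $v$-dependence of $A$ — is not justified. Aside from these two points, your regularization of $\nabla z$, $f$ and $\Psi$ is unnecessary under \eqref{assumption:data}, and the paper passes to the limit by first proving \emph{strong} $L^p$-convergence of $\nabla v_{k_j}^\beta$ (Lemma~\ref{nabla-v_k-convg}) rather than a Minty argument for a weakly convergent flux, which is cleaner given that $A$ is not jointly monotone in $(v,\xi)$; that difference is a matter of technique rather than a gap.
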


The main difficulty in the proof of Theorem~\ref{thm:existence} stems from the degeneracy of the shallow medium equation with respect to the solution and from the topography $z$ which in a certain sense plays the role of an obstacle. For the construction of our solution we proceed by an approximation argument. More precisely, we truncate the vector field $A$ with respect to $v$ from above and from below. For $k>1$ we let
\begin{align*}
A_k(v,\xi):=
A\big(T_k(v),\beta T_k(v)^{\beta-1}\xi\big)
\end{align*}
for $v\in \R$ and $\xi \in \R^n$, where the truncation $T_k:\R\to \R$ is given by
\begin{align*}
T_k(s):= \min\big\{ k, \max\{ s, \tfrac1k\}\big\}.
\end{align*}
Then, we consider approximating solutions $v_k$ to a Cauchy-Dirichlet problem associated with the vector fields $A_k$ and with initial and lateral boundary data $\frac1k$, respectively $\frac1k+\Psi$. The next crucial step is to investigate the properties of the approximating solutions. We prove the lower bound $v_k\ge\frac1k$ and an upper bound for $v_k$ independent of $k$. For large $k$ this allows us to conclude that 
\begin{align*}
	A_k(v_k,\nabla v_k)=A\big(v_k,\nabla v_k^\beta\big).
\end{align*}
Next, we prove a uniform bound for the $L^p(\Omega_T)$-norms of the gradients $\nabla v_k^\beta$. This shows that there exists a weakly convergent subsequence of $v_k^\beta$ in $L^p(0,T;W^{1,p}(\Omega))$ to some function $w$. By a delicate compactness argument (see Section~\ref{sec:compactness}) we identify the pointwise limit of $v_k$ as $v:=w^{\frac1\beta}$. In the next step we use the differential equations for $v_k$, the ellipticity of the vector field $A$ and the previously proved convergences to conclude pointwise convergence of the gradients $\nabla v_{k}^\beta\to \nabla v^\beta$ for a subsequence. This finally allows us to pass to the limit in the differential equations for $v_k$ and in turn to conclude that $u=v+z$ is the desired weak solution to the shallow medium equation.

\section{Preliminaries}
\label{Preliminaries}

Here we introduce some notation and present auxiliary tools that will be useful in the course of the paper.

\subsection{Notation}
With $B_\rho(y_o)$ we denote the open ball in $\R^m$, $m\in\N$ with radius $\rho$ at center $y_o\in\R^m$. 
%Furthermore, by $Q_{\rho,\theta}(z_o):=B_\rho(x_o)\times (t_o-\theta,t_o)$ we define a space-time cylinder, where $z_o:=(x_o,t_o)\in \Omega_T$. To increase readability in cases where the cylinder appears as a subscript we will instead use the notation $Q(\rho,\theta)(z_o)$ or simply $Q(\rho,\theta)$ if the base point $z_o$ is clear from the context. \textcolor{red}{To avoid confusion we should decide for one notation, the same for balls and cylinders.} 
%
%For a function $w\in L^1(\Omega_T)$ and for  we introduce the abbreviation for superlevel sets
%$$
%Q_{\rho,\theta,k}(z_o)^+:=Q_{\rho,\theta,k}^+:=\left\{y\in Q_{\rho,\theta}(z_o): (w(y)-k)_+>0  \right\}
%$$
%where $k\geq 0$. 
For $v,w \geq 0$ we define 
\begin{align}
\label{definition:F}
\b[v,w]&:= \tfrac{1}{\beta+1} (v^{\beta+1}-w^{\beta+1})- w^\beta (v-w)
\\
\notag &\phantom{:} =  \tfrac{\beta}{\beta+1} (w^{\beta+1}-v^{\beta+1})- v (w^\beta-v^\beta),
\end{align}
where $\beta$ is introduced in \eqref{def:beta}.  For convenience we will sometimes use the short hand notation $w(t)=w(\cdot,t)$ for $t\in [0,T]$. 
By $c$ we denote a generic constant which may change from line to line. 

\subsection{Mollifications in time}

Since weak solutions are not necessarily weakly differentiable with respect to time, we will use some mollification techniques. We first recall the definition of Steklov averages. Given a function $v\in L^1(\Omega_T)$ and $h\in(0,T)$, we define its Steklov-mean by 
\begin{equation}\label{def:steklov}
	[v]_h(x,t)	:= 	\frac{1}{h} \int_t^{t+h} v(x,s)\d s, \quad (x,t)\in \Omega \times (0,T-h),
\end{equation}
and the reversed Steklov-mean by
\begin{equation}\label{def:steklov-rev}
	[v]_{\bar h}(x,t) := \frac{1}{h} \int_{t-h}^{t} v(x,s)\d s \quad (x,t) \in \Omega \times (h,T).
\end{equation}
For some basic properties of Steklov averages we refer to Chapter~1.3 of \cite{DiBene} and Section~9 of \cite{DuMi}. For porous medium type equations and more generally doubly nonlinear equations it is often necessary to work with the following so-called exponential time mollification, cf.~\cite{KiLi}. For a function $v\in L^1(\Omega_T)$ and $h\in (0,T)$ we set
\begin{align}
\label{def:moll}
\Ex{v}(x,t):=\frac{1}{h}\int^t_0 e^\frac{s-t}{h}v(x,s)\d s.
\end{align}
for $x\in\Omega$ and $t\in [0,T]$. Moreover, we define the reversed analogue by
\begin{align*}
\Exx{v}(x,t) :=\frac{1}{h}\int^T_t e^\frac{t-s}{h}v(x,s)\d s.
\end{align*}
For details regarding the properties of the exponential mollification we refer to \cite[Lemma 2.2]{KiLi},  \cite[Lemma 2.2]{BoeDuMa}, and \cite[Lemma 2.9]{St}. The properties of the mollification that we will use have been collected for convenience into the following lemma:

\begin{lem}\label{expmolproperties} 
Suppose that $v \in L^1(\Omega_T)$, and let $p\in[1,\infty)$. Then the mollification $\Ex{v}$ defined in \eqref{def:moll} has the following properties:
\begin{enumerate}
\item[(i)] 
If $v\in L^p(\Omega_T)$ then $\Ex{v}\in L^p(\Omega_T)$, 
$$
\norm{\Ex{v}}_{L^p(\Omega_T)}\leq \norm{v}_{L^p(\Omega_T)},
$$
 and $\Ex{v}\to v$ in $L^p(\Omega_T)$. A similar statement holds for $\Exx{v}$.
\item[(ii)]
If $v\in L^p(\Omega_T)$, then $\Ex{v}$ and $\Exx{v}$ have weak time derivatives belonging to $L^p(\Omega_T)$ given by
\begin{align*}
\partial_t \Ex{v}=\tfrac{1}{h}(v-\Ex{v}),\hspace{5mm} \partial_t \Exx{v}=\tfrac{1}{h}(\Exx{v}-v).
\end{align*}
%whereas for $\Exx{v}$ we have
%\begin{align*}
%\partial_t \Exx{v}=\tfrac{1}{h}(\Exx{v}-v).
%\end{align*}
\item[(iii)] 
If $v\in L^p(0,T;W^{1,p}(\Omega))$ then $\Ex{v}\in L^p(0,T;W^{1,p}(\Omega))$ and $\Ex{v}\to v$ in 
$L^p(0,T;W^{1,p}(\Omega))$ as $h\to 0$. Moreover, if $v\in L^p(0,T;W_0^{1,p}(\Omega))$ then $\Ex{v}\in L^p(0,T;W_0^{1,p}(\Omega))$. Similar statements hold for $\Exx{v}$.
\item[(iv)] If $v\in L^p(0,T;L^{p}(\Omega))$ then $\Ex{v},\,\Exx{v} \in C([0,T];L^{p}(\Omega))$.
\end{enumerate}
\end{lem}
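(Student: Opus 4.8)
\textbf{Proof plan for Lemma~\ref{expmolproperties}.}

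The plan is to verify each property essentially by direct computation from the defining integral \eqref{def:moll}, exploiting the fact that the exponential kernel $\frac1h e^{(s-t)/h}\1_{\{s\le t\}}$ has total mass at most $1$ over $s\in[0,t]$. For (i) I would first note that $\Ex{v}(x,t)$ is a weighted average of the values $v(x,s)$ for $s\in(0,t)$ with a sub-probability weight, so Jensen's inequality (or Minkowski's integral inequality, after writing $\Ex{v} = \frac1h\int_0^\infty e^{-r/h}v(x,t-r)\1_{\{r<t\}}\d r$) gives the pointwise bound $\abs{\Ex{v}(x,t)}^p \le \frac1h\int_0^t e^{(s-t)/h}\abs{v(x,s)}^p\d s$; integrating over $\Omega_T$ and swapping the order of integration (Fubini) produces the factor $\int_s^T \frac1h e^{(s-t)/h}\d t \le 1$, which yields the claimed contraction $\norm{\Ex{v}}_{L^p(\Omega_T)}\le \norm{v}_{L^p(\Omega_T)}$. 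Convergence $\Ex{v}\to v$ in $L^p$ I would obtain in the standard way: first for $v$ continuous with compact support in time, where it follows from uniform continuity and the kernel concentrating at $s=t$ as $h\to0$, and then for general $v\in L^p(\Omega_T)$ by density together with the uniform operator bound just proved. The statement for $\Exx{v}$ is identical after the change of variables $t\mapsto T-t$.

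For (ii), the key observation is that $t\mapsto \int_0^t e^{s/h}v(x,s)\d s$ is an absolutely continuous function of $t$ for a.e.\ $x$ (it is the integral of an $L^1$ function in $s$), so the product rule applies to $\Ex{v}(x,t) = \frac1h e^{-t/h}\int_0^t e^{s/h}v(x,s)\d s$: differentiating the exponential prefactor gives $-\frac1h\Ex{v}$, and differentiating the integral gives $\frac1h v$, whence $\partial_t\Ex{v} = \frac1h(v-\Ex{v})$ in the weak sense; that this weak derivative lies in $L^p(\Omega_T)$ is immediate from (i). The formula for $\partial_t\Exx{v}$ is obtained the same way, with the sign flip coming from differentiating the upper versus lower limit. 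Property (iii) follows by applying (i) and its convergence statement to $v$ and to each component of $\nabla v$ separately, after checking that $\nabla$ and the time mollification commute --- which holds because the mollification acts only in the $t$ variable and one may differentiate under the integral sign (justified by a difference-quotient argument using that $\nabla v\in L^p$). The fact that $\Ex{v}$ inherits zero boundary values when $v\in L^p(0,T;W^{1,p}_0(\Omega))$ is because $\Ex{v}(\cdot,t)$ is, for each fixed $t$, a norm-convergent Bochner integral of elements of the closed subspace $W^{1,p}_0(\Omega)$, hence stays in it.

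For (iv) I would combine (i) and (ii): since $\Ex{v}\in L^p(\Omega_T)$ has a weak time derivative $\partial_t\Ex{v} = \frac1h(v-\Ex{v})\in L^p(\Omega_T)$, the function $t\mapsto\Ex{v}(\cdot,t)$ belongs to $W^{1,p}(0,T;L^p(\Omega))$, which embeds continuously into $C([0,T];L^p(\Omega))$; alternatively one can see continuity directly from the explicit formula, estimating $\norm{\Ex{v}(t_1)-\Ex{v}(t_2)}_{L^p(\Omega)}$ by splitting the difference of the two integrals into a boundary term and an exponential-factor term and using absolute continuity of the integral. I do not expect any genuine obstacle here; the only point requiring a little care is the Fubini/order-of-integration step in (i) and the justification of differentiation under the integral sign in (ii)–(iii), both of which are routine once one observes the uniform $L^1$-in-$s$ control provided by $v\in L^p(\Omega_T)\subset L^1(\Omega_T)$. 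In any case, as the lemma is stated purely for convenience and assembled from \cite[Lemma 2.2]{KiLi}, \cite[Lemma 2.2]{BoeDuMa} and \cite[Lemma 2.9]{St}, one may simply cite those references for the detailed verifications.
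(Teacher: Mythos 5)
Your proposal is correct. Note that the paper does not actually supply a proof of Lemma~\ref{expmolproperties}: it merely refers the reader to \cite[Lemma 2.2]{KiLi}, \cite[Lemma 2.2]{BoeDuMa} and \cite[Lemma 2.9]{St}, which you yourself acknowledge at the end. Your sketch therefore goes beyond the paper by actually filling in the details, and it does so correctly: the sub-probability weight of the kernel $\frac1h e^{(s-t)/h}\1_{\{0<s<t\}}$ plus Jensen (valid for the non-normalized measure since $p\geq1$) and Fubini gives the $L^p$-contraction; the approximate-identity argument on $C_c$ plus density gives the convergence; the product rule for the absolutely continuous factor $t\mapsto\int_0^t e^{s/h}v(x,s)\d s$ gives (ii); commuting $\nabla$ with the Bochner integral in $W^{1,p}(\Omega)$ (and closedness of $W^{1,p}_0(\Omega)$) gives (iii); and $W^{1,p}(0,T;L^p(\Omega))\hookrightarrow C([0,T];L^p(\Omega))$, a consequence of (i) and (ii), gives (iv). The only thing to phrase a bit more carefully in (i) is that one also needs the "boundary" term $-e^{-t/h}v(x,t)$ arising from the kernel having mass $1-e^{-t/h}<1$ to vanish in $L^p$ as $h\downarrow0$, which follows from dominated convergence since $e^{-t/h}\to0$ for every $t>0$; this is a routine point and does not affect the soundness of your argument.
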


%With the help of the exponential mollification defined above we obtain the following modified version of the weak formulation \eqref{weakform}; see \cite[Lemma~3.4]{SiVe} for the proof.  
%
%\begin{lem}\label{lem:weakform_2}
%Suppose that $v\colon\Omega_T\to\R_{\ge 0}$ with $v^\beta \in L^p(0,T;W_0^{1,p}(\Omega))$ satisfies \eqref{weakform}. Then, there holds 
%\begin{align}\label{weakform_2}
%	\iint_{\Omega_T} & \zeta' \b[v,w] \d x\d t \nonumber\\
%	&=
%	\iint_{\Omega_T} 
%	\zeta\Big[\partial_t w^\beta(v-w) +
%	A(v,\nabla v^\beta)\cdot \big(\nabla v^\beta-\nabla w^\beta\big) - 
%	f(v^\beta-w^\beta)\Big] \d x\d t,
%\end{align}
%for all $w \in C(0,T;L^2(\Omega))$ with $w^\beta\in L^p(0,T;W_0^{1,p}(\Omega))$ and $\partial_t w^\beta\in L^{\frac{\beta+1}{\beta}}(\Omega_T)$ and any $\zeta\in W^{1,\infty}([0,T],\R_{\ge 0})$ with $\zeta(0)=0=\zeta(T)$.
%\end{lem}

\subsection{Elementary inequalities}

We now recall some elementary inequalities that will be used later, and start by some useful estimates for the quantity $\b[v,w]$ that was defined in \eqref{definition:F}. The proof can be found in \cite[Lemma 2.2 and Lemma 2.3]{BoDuKoSc}.

\begin{lem}\label{estimates:boundary_terms}
Let $v,w \geq 0$ and $\beta> 1$. Then there exists a constant $c\ge 1$ depending only on $\beta$ such that:
\begin{enumerate}
\item[(i)] $\tfrac 1 c\big| w^{\frac{\beta+1}{2}}-v^{\frac{\beta+1}{2}} \big|^2 \leq \b[v,w] \leq c  \big| w^{\frac{\beta+1}{2}}-v^{\frac{\beta+1}{2}} \big|^2$, \vspace{2mm}
\item[(ii)] $\b[v,w] \geq \tfrac1c |v-w|^{\beta+1}$,
\item[(iii)]$\b[v,w] \leq c |v^{\beta}-w^{\beta}|^{\frac{\beta+1}{\beta}}$.
\end{enumerate}
\end{lem}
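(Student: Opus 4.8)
\emph{Approach.}
I would begin from the integral identity
\[
  \b[v,w]=\int_w^v\big(s^\beta-w^\beta\big)\d s,
\]
which exhibits $\b[v,w]$ as the Bregman divergence of the strictly convex function $\Phi(s)=\tfrac{1}{\beta+1}s^{\beta+1}$; this already yields $\b[v,w]\ge 0$, with equality precisely for $v=w$. All four quantities occurring in (i)--(iii) are positively homogeneous of degree $\beta+1$ in the pair $(v,w)$, so after discarding the trivial case $v=w=0$ and treating $w=0$ directly (there $\b[v,0]=\tfrac{1}{\beta+1}v^{\beta+1}$ while each right-hand side equals $v^{\beta+1}$, so all three bounds hold with $c=\beta+1$) I may rescale to the normalisation $w=1$. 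With $t:=v\ge 0$ and
\[
  g(t):=\b[t,1]=\tfrac{1}{\beta+1}t^{\beta+1}-t+\tfrac{\beta}{\beta+1},
\]
the three claims reduce to one-variable comparisons of $g$ with $p_1(t):=\abs{t^{(\beta+1)/2}-1}^2$, $p_2(t):=\abs{t-1}^{\beta+1}$ and $p_3(t):=\abs{t^\beta-1}^{(\beta+1)/\beta}$. Here $g$ is continuous on $[0,\infty)$, convex and smooth on $(0,\infty)$, with $g(1)=g'(1)=0$ and $g''(1)=\beta>0$, so in particular $g>0$ for $t\ne 1$.

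\emph{Compactness argument.}
Each $p_i$ is continuous and strictly positive on $(0,\infty)\setminus\{1\}$, hence so is $g/p_i$, and the whole question is the behaviour at the three critical points $t=0$, $t\to\infty$ and $t\to 1$. At $t=0$ all values are finite and positive. As $t\to\infty$ we have $g(t)\sim\tfrac{1}{\beta+1}t^{\beta+1}$ and, since $2\cdot\tfrac{\beta+1}{2}=\beta\cdot\tfrac{\beta+1}{\beta}=\beta+1$, also $p_i(t)\sim t^{\beta+1}$ for $i=1,2,3$, so each ratio converges to $\tfrac{1}{\beta+1}$. As $t\to 1$, Taylor expansion gives $g(t)=\tfrac{\beta}{2}(t-1)^2+o\big((t-1)^2\big)$ and $p_1(t)=\tfrac{(\beta+1)^2}{4}(t-1)^2+o\big((t-1)^2\big)$, so $g/p_1\to\tfrac{2\beta}{(\beta+1)^2}\in(0,\infty)$; meanwhile $p_2$ vanishes at $t=1$ to order $\beta+1>2$, so $g/p_2\to+\infty$, and $p_3$ to order $\tfrac{\beta+1}{\beta}<2$, so $g/p_3\to 0$. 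Therefore $g/p_1$ extends to a continuous strictly positive function on the compactified half-line and is bounded above and below away from zero (this is (i)), $g/p_2$ is bounded below by a positive constant (this is (ii)), and $g/p_3$ is bounded above (this is (iii)); undoing the normalisation gives the inequalities with $c$ depending only on $\beta$.

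\emph{Explicit constants for (ii) and (iii).}
For these one-sided bounds I would also record quick direct proofs with explicit constants. For (ii), superadditivity $(a+b)^\beta\ge a^\beta+b^\beta$ (valid for $\beta\ge 1$, $a,b\ge 0$) gives $s^\beta-w^\beta\ge(s-w)^\beta$ for $s\ge w$ and $w^\beta-s^\beta\ge(w-s)^\beta$ for $s\le w$; plugging this into the integral identity for $\b$ yields $\b[v,w]\ge\tfrac{1}{\beta+1}\abs{v-w}^{\beta+1}$. For (iii), the second line of \eqref{definition:F} rearranges, with $a=v^\beta$ and $b=w^\beta$, into $\b[v,w]=\tfrac{\beta}{\beta+1}D_\Psi(w^\beta,v^\beta)$ where $\Psi(s)=s^{(\beta+1)/\beta}$ and $D_\Psi$ is its Bregman divergence; since $q:=\tfrac{\beta+1}{\beta}\in(1,2)$, the subadditivity of $s\mapsto s^{q-1}$ gives $D_\Psi(a,b)\le q\abs{a-b}^q$ and hence $\b[v,w]\le\abs{v^\beta-w^\beta}^{(\beta+1)/\beta}$.

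\emph{Main obstacle.}
The genuinely delicate point is the two-sided estimate (i): matching upper and lower bounds forces one to check that $\b[v,w]$ and $\abs{v^{(\beta+1)/2}-w^{(\beta+1)/2}}^2$ vanish to exactly the same (second) order as $v\to w$ with comparable leading coefficients — precisely the content of the Taylor computation above after the homogeneity reduction. Away from the diagonal, and as $v\to\infty$, the comparisons are soft, as are the one-sided bounds (ii) and (iii).
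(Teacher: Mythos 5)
Your proof is correct. The paper itself does not prove Lemma~\ref{estimates:boundary_terms}; it simply refers to \cite[Lemma~2.2 and Lemma~2.3]{BoDuKoSc}, so there is no in-paper argument to compare against. Your reduction by $(\beta+1)$-homogeneity to the one-variable function $g(t)=\b[t,1]$, the Taylor expansions at $t=1$ (second-order vanishing of $g$ and $p_1$, order $\beta+1>2$ for $p_2$, order $\tfrac{\beta+1}{\beta}<2$ for $p_3$), and the asymptotics at $0$ and $\infty$ are all accurate, and the compactification argument correctly yields the two-sided bound in (i) and the one-sided bounds in (ii) and (iii). The integral identity $\b[v,w]=\int_w^v(s^\beta-w^\beta)\,\d s$ and the Bregman-divergence identity $\b[v,w]=\tfrac{\beta}{\beta+1}D_\Psi(w^\beta,v^\beta)$ check out, and the superadditivity/subadditivity arguments give the explicit constants $c=\beta+1$ for (ii) and $c=1$ for (iii) as claimed. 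One tiny inefficiency: $\int_b^a q(s-b)^{q-1}\,\d s=(a-b)^q$, so in fact $D_\Psi(a,b)\le|a-b|^q$, which is sharper than your stated $D_\Psi(a,b)\le q|a-b|^q$; this has no bearing on the validity of your conclusion. Since the genuinely two-sided estimate (i) is the only one whose behaviour near the diagonal must be matched on both sides, your remark identifying it as the delicate case is apt.
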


The following algebraic lemma is standard and can be obtained by reasoning as in the proof of Lemma 5.1 in Chapter IX of \cite{DiBene}.
\begin{lem}\label{lem:monotone}
Let $p>1$. Then, for any $\xi,\eta \in \R^n$ which in the case $p\in(1,2)$ are not both zero there holds 
\begin{equation*}
	\big(|\xi|^{p-2}\xi - |\eta|^{p-2}\eta\big)\cdot(\xi-\eta)
	\ge
	c\big(|\xi|^2 + |\eta|^2\big)^{\frac{p-2}{2}}|\xi-\eta|^2,
\end{equation*}
where $c$ is a constant depending only on $p$. 
\end{lem}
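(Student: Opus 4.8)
\textbf{Proof plan for Lemma~\ref{lem:monotone}.}
The plan is to reduce the inequality to the classical strong monotonicity estimate for the $p$-Laplacian vector field $\xi\mapsto|\xi|^{p-2}\xi$, which is exactly the content of Lemma~5.1 in Chapter~IX of \cite{DiBene}. First I would dispose of the trivial cases: if $\xi=\eta$ both sides vanish, so we may assume $\xi\neq\eta$ (and in the range $p\in(1,2)$ the hypothesis guarantees $|\xi|^2+|\eta|^2>0$, so the right-hand side is well defined). The two parameter regimes $p\ge 2$ and $1<p<2$ then have to be treated separately, since the convexity behaviour of $t\mapsto t^{p/2}$ differs.

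For the degenerate case $p\ge 2$ the strategy is to write the left-hand side as a line integral along the segment $[\eta,\xi]$: setting $\gamma(t)=\eta+t(\xi-\eta)$ one has
\begin{align*}
\big(|\xi|^{p-2}\xi-|\eta|^{p-2}\eta\big)\cdot(\xi-\eta)
=\int_0^1 \frac{\mathrm{d}}{\mathrm{d}t}\Big(|\gamma(t)|^{p-2}\gamma(t)\Big)\cdot(\xi-\eta)\d t,
\end{align*}
and the derivative inside equals $|\gamma(t)|^{p-2}|\xi-\eta|^2+(p-2)|\gamma(t)|^{p-4}(\gamma(t)\cdot(\xi-\eta))^2\ge |\gamma(t)|^{p-2}|\xi-\eta|^2$ because $p-2\ge 0$. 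It then remains to bound $\int_0^1|\gamma(t)|^{p-2}\d t$ from below by $c(|\xi|^2+|\eta|^2)^{\frac{p-2}{2}}$, which is an elementary estimate: on at least half of the unit interval $|\gamma(t)|$ is comparable to $\max\{|\xi|,|\eta|\}$, which is comparable to $(|\xi|^2+|\eta|^2)^{1/2}$.

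For the singular case $1<p<2$ the roles are reversed and one argues by a duality/Young-type trick rather than along the segment. Here I would use the known inequality $\big(|\xi|^{p-2}\xi-|\eta|^{p-2}\eta\big)\cdot(\xi-\eta)\ge (p-1)|\xi-\eta|^2\big(|\xi|+|\eta|\big)^{p-2}$, and then note $\big(|\xi|+|\eta|\big)^{p-2}\ge c\big(|\xi|^2+|\eta|^2\big)^{\frac{p-2}{2}}$ since $p-2<0$ and $|\xi|+|\eta|\le \sqrt2\,(|\xi|^2+|\eta|^2)^{1/2}$. Alternatively, and perhaps cleaner, one invokes directly the cited Lemma~5.1 of \cite{DiBene}, whose proof already produces precisely this form of the bound; the only thing to check is that the constant depends on $p$ alone. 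The main obstacle, such as it is, is purely bookkeeping: making sure the case distinction is exhaustive, that the constant is tracked to depend only on $p$, and that in the singular range the degenerate point $\xi=\eta=0$ (where $(|\xi|^2+|\eta|^2)^{(p-2)/2}$ blows up) is correctly excluded by the hypothesis. There is no serious analytical difficulty here — this is a standard lemma and the reference essentially does the work.
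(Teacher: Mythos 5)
Your proposal is correct and is essentially the same approach as the paper, which gives no proof at all and merely refers the reader to Lemma~5.1 in Chapter~IX of \cite{DiBene}. Your line-integral argument for $p\ge 2$ and the reduction to the standard $(|\xi|+|\eta|)^{p-2}$ form for $1<p<2$ are both sound and simply make explicit what the citation leaves implicit; the only minor caveat is that the exact constant $(p-1)$ in the singular-case inequality need not be verified, since any $c=c(p)>0$ suffices.
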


\begin{rem}\label{rem:monotone}
Let $p>1$ and $\xi,\eta,\zeta \in \R^n$ with either $\xi\not=\zeta$ or $\eta\not=\zeta$ in the case $p\in(1,2)$. Then, by Lemma~\ref{lem:monotone} there holds
\begin{align*}
	&\big(|\xi+\zeta|^{p-2}(\xi+\zeta) - |\eta+\zeta|^{p-2}(\eta+\zeta)\big)
	\cdot(\xi-\eta) \\
	&\qquad=
	\big(|\xi+\zeta|^{p-2}(\xi+\zeta) - |\eta+\zeta|^{p-2}(\eta+\zeta)\big)
	\cdot\big((\xi+\zeta)-(\eta+\zeta)\big) \\
	&\qquad\ge
	\tfrac1c
	\big(|\xi+\zeta|^2 + |\eta+\zeta|^2\big)^{\frac{p-2}{2}}|\xi-\eta|^2 \\
	&\qquad\ge
	\tfrac1c
	\big(|\xi+\zeta| + |\eta+\zeta|\big)^{p-2}|\xi-\eta|^2.
\end{align*}
If $p\ge 2$, we use $|\xi-\eta|^2\le 2(|\xi+\zeta|^2+|\eta+\zeta|^2)$ to conclude 
\begin{align*}
	\big(|\xi+\zeta|^{p-2}(\xi+\zeta) - |\eta+\zeta|^{p-2}(\eta+\zeta)\big)
	\cdot(\xi-\eta) 
	\ge
	c\,|\xi-\eta|^p.
\end{align*}
\end{rem}

\begin{lem}\label{lem:V}
Let $p>1$. Then, for any $\xi,\eta \in \R^n$ there holds 
\begin{equation*}
	|\xi+\eta|^{p-2}(\xi+\eta)\cdot\xi
	\ge
	2^{-p}|\xi|^p - 2^p|\eta|^p
\end{equation*}
\end{lem}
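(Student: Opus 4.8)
The plan is to rewrite the left-hand side in terms of the single vector $\zeta := \xi+\eta$, and then reduce everything to Young's inequality combined with an elementary convexity estimate. Since $\xi = \zeta - \eta$, I would first observe that
\[
|\xi+\eta|^{p-2}(\xi+\eta)\cdot\xi
= |\zeta|^{p-2}\zeta\cdot\zeta - |\zeta|^{p-2}\zeta\cdot\eta
= |\zeta|^p - |\zeta|^{p-2}\zeta\cdot\eta
\ge |\zeta|^p - |\zeta|^{p-1}|\eta|,
\]
using the Cauchy--Schwarz inequality in the last step. Thus it suffices to bound $|\zeta|^p - |\zeta|^{p-1}|\eta|$ from below by $2^{-p}|\xi|^p - 2^p|\eta|^p$.

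Next I would estimate the mixed term by Young's inequality with a parameter $\delta>0$ and conjugate exponents $p' = p/(p-1)$ and $p$, namely $|\zeta|^{p-1}|\eta| \le \tfrac{\delta^{p'}}{p'}|\zeta|^p + \tfrac{\delta^{-p}}{p}|\eta|^p$, and then choose $\delta$ so that $\delta^{p'}/p' = \tfrac12$. Using $p/p' = p-1$ this gives
\[
|\zeta|^p - |\zeta|^{p-1}|\eta| \ge \tfrac12 |\zeta|^p - c_p\,|\eta|^p,
\qquad c_p := \tfrac1p\Big(\tfrac{2(p-1)}{p}\Big)^{p-1}.
\]
To replace $|\zeta|^p$ by $|\xi|^p$ I would invoke the convexity of $t\mapsto t^p$: from $|\xi|^p = |\zeta-\eta|^p \le 2^{p-1}(|\zeta|^p+|\eta|^p)$ one gets $|\zeta|^p \ge 2^{1-p}|\xi|^p - |\eta|^p$, and substituting this yields
\[
|\xi+\eta|^{p-2}(\xi+\eta)\cdot\xi \ge 2^{-p}|\xi|^p - \big(\tfrac12 + c_p\big)|\eta|^p .
\]

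The last step is to verify $\tfrac12 + c_p \le 2^p$. Since $\big(2(p-1)/p\big)^{p-1} = 2^{p-1}(1-1/p)^{p-1} \le 2^{p-1}$ for $p>1$, we have $c_p \le 2^{p-1}/p \le 2^{p-1}$, hence $\tfrac12 + c_p \le \tfrac12 + 2^{p-1} \le 2^p$, the final inequality amounting to $1 \le 2^p$. This closes the argument. There is no genuine obstacle here; the only point requiring a little care is tuning the Young parameter so that exactly half of $|\zeta|^p$ is retained — this is precisely what makes the prefactor $2^{-p}$ survive the passage from $|\zeta|^p$ to $|\xi|^p$ — and then checking by the elementary bound $(1-1/p)^{p-1}\le 1$ that the coefficient of $|\eta|^p$ stays below $2^p$.
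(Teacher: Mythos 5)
Your proof is correct and follows essentially the same route as the paper: rewrite the left-hand side as $|\xi+\eta|^p$ minus a cross term, bound the cross term by Young's inequality keeping half of $|\xi+\eta|^p$, and then pass from $|\xi+\eta|^p$ to $|\xi|^p$ via the convexity estimate $|\xi|^p\le 2^{p-1}(|\xi+\eta|^p+|\eta|^p)$. The only difference is that you track the Young constant $c_p$ explicitly and verify $\tfrac12+c_p\le 2^p$ at the end, whereas the paper simply absorbs it into the (generous) bound $2^{p-1}$ directly.
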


\begin{proof}[Proof]
By Young's inequality we obtain
\begin{align*}
	|\xi+\eta|^{p-2}(\xi+\eta)\cdot\xi 
	&= 
	|\xi+\eta|^{p} - |\xi+\eta|^{p-2}(\xi+\eta)\cdot\eta \\
	&\geq 
	\tfrac12|\xi+\eta|^{p} - 2^{p-1}|\eta|^p \\
	&\geq 
	2^{-p}|\xi|^p - 2^p|\eta|^p.
\end{align*}
This is the claimed inequality.
\end{proof}

\begin{lem}\label{p-laplace-estim}
Let $p>1$ and $\xi,\eta \in \R^n$. In the case $p<2$ we also assume that either $\xi$ or $\eta$ is nonzero. Then there exists a constant $c$ depending only on $p$ such that:
\begin{align*}
\big||\xi|^{p-2}\xi-|\eta|^{p-2}\eta\big|\leq c (|\eta|+ |\xi-\eta|)^{p-2}|\xi-\eta|.
\end{align*}
\end{lem}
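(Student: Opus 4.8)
The plan is to write $V(\xi):=|\xi|^{p-2}\xi$ and to control $V(\xi)-V(\eta)$ by integrating the derivative of $V$ along the segment joining $\eta$ to $\xi$, which reduces the claim to a one-dimensional integral inequality. First I would record that $V\in C^1(\R^n\setminus\{0\})$ with $DV(w)=|w|^{p-2}\bigl(\mathrm{Id}+(p-2)|w|^{-2}\,w\otimes w\bigr)$, hence $|DV(w)|\le c(p)\,|w|^{p-2}$, and that $V$ extends continuously to the origin because $p>1$. Writing $\zeta:=\xi-\eta$ and noting that the segment $t\mapsto\eta+t\zeta$, $t\in[0,1]$, meets the origin in at most one point (and for $p\ge2$ the map $DV$ is continuous everywhere anyway), the pointwise bound $|DV(\eta+t\zeta)\zeta|\le c(p)\,|\eta+t\zeta|^{p-2}|\zeta|$ together with $p-2>-1$ shows the integrand is summable on $[0,1]$; a routine limiting argument then gives the identity $V(\xi)-V(\eta)=\int_0^1 DV(\eta+t\zeta)\zeta\,\d t$ and therefore
\[
\bigl|V(\xi)-V(\eta)\bigr|\le c(p)\,|\zeta|\int_0^1|\eta+t\zeta|^{p-2}\,\d t .
\]

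It then remains to prove $\int_0^1|\eta+t\zeta|^{p-2}\,\d t\le c(p)\,(|\eta|+|\zeta|)^{p-2}$. For $p\ge2$ this is immediate, since $|\eta+t\zeta|\le|\eta|+|\zeta|$ and $s\mapsto s^{p-2}$ is nondecreasing, so the integrand is bounded by $(|\eta|+|\zeta|)^{p-2}$. The delicate case is $1<p<2$, where $s\mapsto s^{p-2}$ is decreasing and the segment may pass close to the origin; here I would distinguish two subcases. If $|\zeta|<\tfrac12|\eta|$, then $|\eta+t\zeta|\ge|\eta|-|\zeta|>\tfrac12|\eta|$ for all $t\in[0,1]$, so the integrand is at most $(\tfrac12|\eta|)^{p-2}$, which is comparable to $(|\eta|+|\zeta|)^{p-2}$ because $|\eta|+|\zeta|<\tfrac32|\eta|$. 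If instead $|\zeta|\ge\tfrac12|\eta|$ (in particular $\zeta\neq0$, using that $\xi,\eta$ are not both zero), I would use that the quadratic $t\mapsto|\eta+t\zeta|^2=|\eta|^2+2t\,\eta\cdot\zeta+t^2|\zeta|^2$ attains a nonnegative minimum at some $t_\ast\in\R$, so that $|\eta+t\zeta|^2\ge|\zeta|^2(t-t_\ast)^2$ and hence
\[
\int_0^1|\eta+t\zeta|^{p-2}\,\d t\le|\zeta|^{p-2}\int_0^1|t-t_\ast|^{p-2}\,\d t\le\tfrac{2}{p-1}\,|\zeta|^{p-2};
\]
since $|\eta|+|\zeta|\le3|\zeta|$ and $p-2<0$, the right-hand side is at most $\tfrac{2}{p-1}\,3^{2-p}(|\eta|+|\zeta|)^{p-2}$. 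Combining the two displays yields the lemma with a constant depending only on $p$.

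The only genuinely delicate point is this case analysis for $1<p<2$: because $p-2<0$, the integral $\int_0^1|\eta+t\zeta|^{p-2}\,\d t$ is large exactly when $\eta+t\zeta$ is small on a nonnegligible portion of $[0,1]$, and this must be ruled out. The quadratic lower bound $|\eta+t\zeta|\ge|\zeta|\,|t-t_\ast|$ is precisely what absorbs the contribution of the (unavoidable) near-origin part of the segment, while the regime $|\zeta|<\tfrac12|\eta|$ is trivial since the segment then stays uniformly away from the origin. A secondary, purely technical matter is the justification of the fundamental theorem of calculus when the segment passes through the origin for $1<p<2$; this follows from the local Lipschitz continuity of $V$ away from $0$, its continuity at $0$, and the summability of $t\mapsto|\eta+t\zeta|^{p-2}$ on $[0,1]$.
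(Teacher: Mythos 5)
Your proof is correct, but it takes a different route from the paper. The paper simply invokes \cite[Lemma~2.2]{AcFu-1989} (with $\mu=0$), which directly furnishes the bound $\big||\xi|^{p-2}\xi-|\eta|^{p-2}\eta\big|\le c\,(|\xi|^2+|\eta|^2)^{(p-2)/2}|\xi-\eta|$, and then converts $(|\xi|^2+|\eta|^2)^{(p-2)/2}$ into $(|\eta|+|\xi-\eta|)^{p-2}$ via two short elementary steps (equivalence of the quantities $(|\xi|^2+|\eta|^2)^{1/2}$ and $|\xi|+|\eta|$, then the two-sided comparison $\tfrac12(|\eta|+|\xi-\eta|)\le|\eta|+|\xi|\le2(|\eta|+|\xi-\eta|)$, which controls both signs of $p-2$). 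You instead prove the underlying estimate from scratch by integrating $DV$ along the chord from $\eta$ to $\xi$, bounding $|DV(w)|\le c(p)|w|^{p-2}$, and then controlling $\int_0^1|\eta+t\zeta|^{p-2}\,\d t$ by a case analysis (the quadratic lower bound $|\eta+t\zeta|\ge|\zeta|\,|t-t_\ast|$ plus summability $p-2>-1$ being the essential point when $1<p<2$). This is in fact essentially how the Acerbi--Fusco lemma itself is proved, so your argument is self-contained and more illuminating about where the constant comes from and why $p>1$ is needed, at the cost of being longer than a two-line citation-plus-algebra proof. One minor tidying remark: in the regime $|\zeta|\ge\tfrac12|\eta|$ you should note that if $\zeta=0$ then $\xi=\eta$ and both sides of the claimed inequality vanish, so one may assume $\zeta\neq0$ before dividing by $|\zeta|^2$; your parenthetical appeal to the hypothesis that not both $\xi,\eta$ vanish is not quite the right reason.
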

\begin{proof}[Proof]
Applying \cite[Lemma~2.2]{AcFu-1989} with $\mu=0$ we obtain
\begin{align*}
\big||\xi|^{p-2}\xi-|\eta|^{p-2}\eta\big|
&\leq c \big(|\eta|^2+ |\xi|^2\big)^{\frac{p-2}{2}}|\xi-\eta|\\
&\leq c (|\eta|+ |\xi|)^{p-2}|\xi-\eta|\\
&\leq 2^{|p-2|}c (|\eta|+ |\xi-\eta|)^{p-2}|\xi-\eta|,
\end{align*}
which is the desired estimate.
\end{proof}

\subsection{Auxiliary tools}

The following lemma can be proven using an inductive argument, see for example \cite[\S\,I,~Lemma~4.1]{DiBene}. 
%\cite[Lemma 7.1]{Gi}.
\begin{lem}
\label{fastconvg} Let $(Y_j)^\infty_{j=0}$ be a sequence of positive real numbers such that 
\begin{equation*}
Y_{j+1}\leq C b^j Y^{1+\delta}_j,
\end{equation*}
where $C, b >1$ and $\delta>0$. If
\begin{equation*}
Y_0\leq C^{-\frac{1}{\delta}}b^{-\frac{1}{\delta^2}},
\end{equation*}
then $(Y_j)$ converges to zero as $j\to\infty$.
\end{lem}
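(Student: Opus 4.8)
The statement to prove is the fast-convergence lemma (Lemma~\ref{fastconvg}): if $Y_{j+1}\le C b^j Y_j^{1+\delta}$ with $C,b>1$, $\delta>0$, and $Y_0 \le C^{-1/\delta}b^{-1/\delta^2}$, then $Y_j\to 0$.

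The plan is to prove by induction the quantitative bound $Y_j \le Y_0\, b^{-j/\delta}$ for all $j\ge 0$, which immediately forces $Y_j\to 0$ since $b>1$. The base case $j=0$ is trivial. For the inductive step, I would assume $Y_j\le Y_0 b^{-j/\delta}$ and plug into the recursion:
\begin{align*}
Y_{j+1}
&\le C b^j Y_j^{1+\delta}
\le C b^j \big(Y_0 b^{-j/\delta}\big)^{1+\delta}
= C b^j Y_0^{1+\delta} b^{-j(1+\delta)/\delta}
= C Y_0^{1+\delta}\, b^{-j/\delta}.
\end{align*}
So it remains to check that $C Y_0^{1+\delta} b^{-j/\delta} \le Y_0 b^{-(j+1)/\delta}$, i.e. that $C Y_0^{\delta} \le b^{-1/\delta}$, which is exactly the hypothesis $Y_0 \le C^{-1/\delta}b^{-1/\delta^2}$ raised to the $\delta$-th power. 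This closes the induction.

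The key observation driving the choice of the ansatz is that one should guess a decaying geometric bound $Y_j\le K\mu^j$ and determine $K,\mu$ so that the recursion is self-reproducing: substituting gives the requirement $C b^j K^{1+\delta}\mu^{j(1+\delta)} \le K\mu^{j+1}$, and matching the $j$-dependent factors forces $b\,\mu^{1+\delta} = \mu$, hence $\mu = b^{-1/\delta}$; the remaining constant inequality $C K^{\delta}\le \mu$ then pins down the smallness condition on $K=Y_0$. There is essentially no obstacle here: the only mild subtlety is getting the exponents right so that the $j$-dependence cancels cleanly, but once the ansatz $Y_j \le Y_0 b^{-j/\delta}$ is written down the verification is a one-line computation as above. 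An alternative, slightly less slick route would be to iterate the inequality directly, writing $\log Y_j$ in terms of $\log Y_0$ and geometric-type sums in $\delta$ and $j\log b$, but the inductive argument is cleaner and is the one I would present.
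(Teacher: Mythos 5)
Your proof is correct and is essentially the standard inductive argument that the paper defers to (\cite[\S\,I,~Lemma~4.1]{DiBene}): one shows $Y_j\le Y_0\,b^{-j/\delta}$ by induction, with the exponent matching $b\mu^{1+\delta}=\mu$ giving $\mu=b^{-1/\delta}$ and the smallness hypothesis on $Y_0$ closing the constant comparison, exactly as you wrote.
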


Next, we recall a well-known parabolic Sobolev inequality, which can be found for example in \cite{DiBene}. 
\begin{lem}
\label{lemma:Gagliardo}
Let $\Omega\subset \R^{n}$, $T>0$ and $\theta>0$. Suppose that $q>0$, $p>1$. Then for every 
$$
u\in L^\infty \big(0,T;L^q(\Omega)\big) \cap L^p\big(0,T;W^{1,p}_0(\Omega)\big)
$$
we have
\begin{align*}
\iint_{\Omega_T} |u|^{p(1+\frac qn)} \d x\d t& \leq c\bigg[\esssup_{t\in (0,T)} \int_{\Omega\times \{t\}} |u|^q \d x\bigg]^{\frac pn} \iint_{\Omega_T}  |\nabla u|^p  \d x\d t
\end{align*}
for a constant $c=c(n,p,q,\Omega)$. 
\end{lem}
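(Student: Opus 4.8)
\textbf{Plan for the proof of Lemma~\ref{lemma:Gagliardo}.} The plan is to reduce the parabolic estimate to the classical Gagliardo--Nirenberg--Sobolev inequality applied slicewise in time, followed by an interpolation of the resulting exponents and an integration in time. First I would treat the model case and note that it suffices to prove the inequality for $u$ smooth with compact support in $\Omega$ for each fixed $t$, the general case following by the usual approximation (mollification in space, truncation) together with Fatou's lemma and the density of smooth compactly supported functions in $W^{1,p}_0(\Omega)$; here one uses that $u\in L^\infty(0,T;L^q(\Omega))\cap L^p(0,T;W^{1,p}_0(\Omega))$ so both sides of the asserted inequality make sense.

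For almost every fixed $t\in(0,T)$ I would apply the spatial Gagliardo--Nirenberg--Sobolev inequality on $\Omega\subset\R^n$ to the function $x\mapsto u(x,t)$. Writing $r$ for the target Lebesgue exponent, the Sobolev embedding combined with interpolation between $L^q$ and $L^{p^*}$ (where $p^*$ is the Sobolev conjugate of $p$ when $p<n$, and where one argues by an elementary modification when $p\ge n$) yields
\begin{align*}
	\int_{\Omega\times\{t\}} |u|^{r}\d x
	\le
	c\bigg(\int_{\Omega\times\{t\}} |\nabla u|^p\d x\bigg)^{\vartheta}
	\bigg(\int_{\Omega\times\{t\}} |u|^q\d x\bigg)^{\frac{r-\vartheta p}{q}},
\end{align*}
for a suitable exponent $\vartheta\in(0,1]$ determined by scaling. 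The scaling relation that fixes the admissible $r$ and $\vartheta$ is obtained by testing the inequality on dilations $u(\lambda x)$: matching powers of $\lambda$ forces the homogeneity condition, and choosing $\vartheta=1$ (so that the gradient term appears to the first power, as in the statement) then forces precisely $r = p\big(1+\tfrac qn\big)$ and the Lebesgue exponent on the $L^q$-factor to be $\tfrac pn$.

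With the slicewise inequality in hand and $\vartheta=1$, I would bound the $L^q$-factor by its essential supremum in time, pulling it out of the time integral:
\begin{align*}
	\iint_{\Omega_T} |u|^{p(1+\frac qn)}\d x\d t
	&\le
	c\int_0^T\bigg(\int_{\Omega\times\{t\}} |\nabla u|^p\d x\bigg)
	\bigg(\int_{\Omega\times\{t\}} |u|^q\d x\bigg)^{\frac pn}\d t\\
	&\le
	c\bigg(\esssup_{t\in(0,T)}\int_{\Omega\times\{t\}} |u|^q\d x\bigg)^{\frac pn}
	\iint_{\Omega_T} |\nabla u|^p\d x\d t,
\end{align*}
which is exactly the claimed estimate. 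The constant $c$ depends only on $n$, $p$, $q$ and (through the Sobolev/Poincaré step on a bounded domain, needed to handle the full range of $p$ and $q$ including $p\ge n$) on $\Omega$.

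The main obstacle I anticipate is the treatment of the spatial step when $p\ge n$, where there is no direct Sobolev conjugate exponent; there one must instead embed $W^{1,p}_0(\Omega)$ into $L^s(\Omega)$ for a sufficiently large finite $s$ (using boundedness of $\Omega$) and then interpolate, which is where the dependence of $c$ on $\Omega$ enters. A secondary technical point is justifying the reduction to smooth functions and the measurability in $t$ of the slicewise quantities, so that Fubini's theorem and the final time integration are legitimate; this is routine but should be mentioned. Everything else is a bookkeeping exercise in matching exponents via scaling.
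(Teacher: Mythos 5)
Your sketch is correct and coincides with the standard proof of this parabolic Sobolev inequality, which is the one given in the reference the paper cites (DiBenedetto, Degenerate Parabolic Equations, Chapter I, Proposition 3.1); the paper itself does not prove this lemma but simply invokes it. The slicewise Gagliardo--Nirenberg interpolation with the gradient term to the first power, the scaling computation pinning down the exponent $r=p(1+\tfrac qn)$, and the final step of pulling the $\esssup_t$ of the $L^q$-norm outside the time integral are exactly the intended argument; the only points you should make sure to spell out in a full writeup are the case $p\ge n$ (handled via the embedding $W^{1,p}_0(\Omega)\hookrightarrow L^s(\Omega)$ for large finite $s$ on the bounded domain, as you note) and the measurability/Fubini justification for the time integration, both of which you have flagged.
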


We now present a version of the Cauchy-Peano existence theorem which will be used in connection with Galerkin's method.
\begin{lem}\label{lem:Cauchy-Peano}
Let $A$ be an invertible $m\times m$ matrix, let $g:\R^m\to \R^m$ be continuous and let $\tilde{f}:[0,T]\to \R^m$ be integrable. Let $y_o\in \R^m$. Then there exists a $\delta >0$ and an absolutely continuous function $y:[0,\delta]\to \R^m$ such that
\begin{align*}
A y'(t) &=g(y(t))+\tilde{f}(t), \hspace{5mm} \textrm{a.e. in }[0,\delta],
\\
y(0)&=y_o.
\end{align*}
\end{lem}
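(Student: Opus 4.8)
The plan is to reduce the system to an equivalent integral equation and then invoke Schauder's fixed point theorem, i.e.\ to prove the classical Carathéodory version of Peano's theorem. Since $A$ is invertible, set $G:=A^{-1}g$, which is again continuous on $\R^m$, and $F:=A^{-1}\tilde f\in L^1([0,T];\R^m)$. Then $y\colon[0,\delta]\to\R^m$ is an absolutely continuous solution of $Ay'(t)=g(y(t))+\tilde f(t)$ a.e.\ with $y(0)=y_o$ if and only if
\begin{equation*}
y(t)=y_o+\int_0^t\big(G(y(s))+F(s)\big)\d s\qquad\text{for all }t\in[0,\delta].
\end{equation*}
Indeed, any solution of this integral equation is automatically absolutely continuous because the integrand lies in $L^1([0,\delta];\R^m)$, and differentiating a.e.\ and multiplying by $A$ recovers the ODE. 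So it suffices to produce a fixed point of the associated integral operator on a short time interval.

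To set this up, fix $R>0$ and let $M:=\max_{|y-y_o|\le R}|G(y)|<\infty$, which is finite by continuity of $G$ and compactness of the closed ball. Since $t\mapsto\int_0^t|F(s)|\d s$ is continuous with value $0$ at $t=0$, choose $\delta\in(0,T]$ so small that $M\delta+\int_0^\delta|F(s)|\d s\le R$. On the Banach space $C([0,\delta];\R^m)$ consider the closed convex set $X:=\{\,y\in C([0,\delta];\R^m):\norm{y-y_o}_\infty\le R\,\}$ and define $\mathcal T\colon X\to C([0,\delta];\R^m)$ by $(\mathcal T y)(t):=y_o+\int_0^t(G(y(s))+F(s))\d s$.

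It then remains to check the hypotheses of Schauder's fixed point theorem. First, $\mathcal T(X)\subseteq X$: by the choice of $\delta$ one has $\norm{\mathcal T y-y_o}_\infty\le M\delta+\int_0^\delta|F(s)|\d s\le R$. Second, $\mathcal T(X)$ is relatively compact in $C([0,\delta];\R^m)$: it is bounded, and for $0\le t_1<t_2\le\delta$ we have $|(\mathcal T y)(t_2)-(\mathcal T y)(t_1)|\le M(t_2-t_1)+\int_{t_1}^{t_2}|F(s)|\d s$, whose right-hand side tends to $0$ as $t_2-t_1\to0$ uniformly in $y\in X$ by absolute continuity of the integral of $|F|$; hence $\mathcal T(X)$ is equicontinuous and Arzelà–Ascoli applies. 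Third, $\mathcal T$ is continuous: if $y_j\to y$ uniformly on $[0,\delta]$ with $y_j\in X$, then $G(y_j(s))\to G(y(s))$ pointwise by continuity of $G$ while $|G(y_j(s))|\le M$, so dominated convergence yields $\int_0^t G(y_j(s))\d s\to\int_0^t G(y(s))\d s$ uniformly in $t$, i.e.\ $\mathcal T y_j\to\mathcal T y$ in $C([0,\delta];\R^m)$. Schauder's theorem then provides a fixed point $y\in X$, which by the first paragraph is the desired solution.

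I do not expect a genuine obstacle here: this is the standard Carathéodory generalization of Peano's existence theorem. The only point needing slightly more care than the case of continuous $\tilde f$ (covered by Peano's theorem proper) is that a merely integrable right-hand side forces one to work with the integral equation and to use the absolute continuity of $s\mapsto\int_0^s|F|$ twice — once to select $\delta$ and once for the equicontinuity of $\mathcal T(X)$. As an alternative to Schauder, one may instead mollify $F$ to get $F_\varepsilon\to F$ in $L^1$, solve the resulting smooth systems on a common interval $[0,\delta]$ by the classical Peano theorem (the a priori bounds being uniform in $\varepsilon$ because $\norm{F_\varepsilon}_{L^1}$ stays bounded), extract via Arzelà–Ascoli a subsequence converging uniformly to some $y$, and pass to the limit in the integral equation; this gives the same conclusion.
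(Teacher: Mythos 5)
Your proof is correct and follows essentially the same strategy as the paper: reduce to $A=I$, pass to the equivalent integral equation, and apply Schauder's fixed point theorem with compactness supplied by Arzel\`a--Ascoli. The only (cosmetic) difference is in how Schauder is invoked: the paper restricts attention to the set $K$ of functions in the ball $\bar B_r(c(y_o))$ whose increments are dominated by $\int_{t_1}^{t_2}(M+|\tilde f|)\d t$, so that $K$ itself is compact and convex and one uses the ``compact convex domain'' form of Schauder; you instead work on the full sup-norm ball $X$ (closed, convex, but not compact) and verify that $\mathcal T(X)$ is relatively compact, invoking the ``relatively compact image'' form of Schauder. Both forms are equivalent and standard, and both proofs use the same equicontinuity estimate — the paper encodes it in the definition of $K$, whereas you derive it directly for $\mathcal T(X)$. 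Likewise the continuity of the integral operator is established by you via dominated convergence and by the paper via uniform continuity of $g$ on the closed ball; either works.
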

\begin{proof}[Proof]
Since the differential equation is equivalent to 
\begin{align*}
y'(t)= A^{-1}\circ g(y(t))+ A^{-1}\circ \tilde{f}(t),
\end{align*}
and since $A^{-1}\circ g$ is continuous and $A^{-1}\circ \tilde{f}$ is integrable, we may assume $A=I_{m\times m}$.
Fix $r>0$ and set $M:=\max_{\bar{B}_r(y_o)}|g|$. Define $\delta_1:=\tfrac{r}{2M+1}$ and pick $\delta_2>0$ so small that 
\begin{align*}
\int^{\delta_2}_0 |\tilde{f}| \d t < \frac r2.
\end{align*}
We choose $\delta=\min\{\delta_1,\delta_2\}$. Denoting by $c(y_o)$ the constant function $c(y)\equiv y_o$ on $[0,\delta]$ and setting $\widetilde F(t):=M+|\tilde{f}(t)|$, we consider the subset
\begin{align*}
K:=\bigg\{u\in \bar{B}_r(c(y_o))\,:\, |u(t_1)-u(t_2)|\leq \int^{t_2}_{t_1} \widetilde F(t)\d t, \hspace{4mm}0\leq t_1\leq t_2 \leq \delta\bigg\}
\end{align*}
of the vector space $C([0,\delta];\R^m)$ equipped with the sup-norm. The set $K$ is convex and closed. Furthermore, the elements of $K$ are equicontinuous pointwise bounded functions, so the Arzel\`a-Ascoli theorem guarantees that $K$ is compact. We note that we have a well-defined map $T:K\to K$ defined by
\begin{align*}
(T u)(t)= y_o+\int^t_0 \big[g(u(s))+ \tilde{f}(s)\big]\d s.
\end{align*}
Since 
\begin{align*}
|Tu(t)-Tv(t)|\leq \int^t_0 |g(u(s))-g(v(s))|\d s \leq \delta \sup_{s\in [0,\delta]}|g(u(s))-g(v(s))|,
\end{align*}
the uniform continuity of $g$ on the compact set $\bar{B}_r(y_o)$ shows that $T$ is continuous $K\to K$. Thus, Schauder's fixed point theorem guarantees that there exists at least one $y\in K$ such that $Ty=y$. But then $y$ is a solution to the original problem.
\end{proof}

\subsection{Time continuity}
We have taken a certain time continuity as part of the definition of a solution to the Cauchy-Dirichlet problem. 
Our next goal is to show that all weak solutions vanishing on the lateral boundary and having sufficiently high integrability automatically have this type of time continuity. The extra integrability condition $v\in L^{\beta+1}(\Omega_T)$ occuring in the two following lemmas is redundant in the case $\alpha+p\geq 2$, since then $\beta p\geq \beta + 1$. Note that we will only apply the results to the bounded solution found in Section \ref{sec:existence}, which obviously satisfies the extra integrablity condition. 
\begin{lem}\label{lem:weakform_2}
For all $v\in L^{\beta+1}(\Omega_T;\R_{\geq 0})$ with $v^\beta \in L^p(0,T;W^{1,p}_0(\Omega))$ satisfying \eqref{weakform}, all $\zeta \in W^{1,\infty}([0,T];\R_{\geq 0})$ satisfying $\zeta(0)=\zeta(T)=0$, and all $w$ in
\begin{align*}
\mathcal{V}:=\big\{ w \in L^{\beta+1}(\Omega_T)\,|\, w^\beta\in L^p(0,T;W^{1,p}_0(\Omega)), \partial_t w^\beta \in L^{\frac{\beta+1}{\beta}}(\Omega_T) \big\},
\end{align*}
we have
\begin{align}\label{ske}
\iint_{\Omega_T} & \zeta' \b[v,w]\d x \d t\notag\\ &= 
\iint_{\Omega_T} \zeta\Big[\partial_t w^\beta(v-w) + A(v,\nabla v^\beta)\cdot (\nabla v^\beta-\nabla w^\beta) - f (v^\beta-w^\beta)\Big] \d x\d t.
\end{align}
\end{lem}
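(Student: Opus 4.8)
The identity \eqref{ske} is a testing identity that one obtains by formally using $v-w$ as a test function in the weak formulation for $v$, combined with integration by parts in time. The difficulty is that neither $v$ nor $w$ need be admissible test functions in \eqref{weakform}: they do not have the smoothness of $C_0^\infty(\Omega_T)$, and $v$ is not known to have a weak time derivative. The plan is therefore to regularize both in time and then pass to the limit.

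First I would mollify in time. For the $v$-part, the natural choice is the exponential mollification $\Ex{\cdot}$ from \eqref{def:moll}, since by Lemma~\ref{expmolproperties}(ii) it produces a genuine weak time derivative $\partial_t\Ex{v}=\tfrac1h(v-\Ex{v})$, while by (i) and (iii) it converges back in the relevant $L^p$ and $L^p(W^{1,p})$ topologies. Inserting a test function of the form $\varphi = \zeta(t)\,\Phi_\varepsilon$ where $\Phi_\varepsilon$ is a spatial mollification of $v^\beta-w^\beta$ into \eqref{weakform}, or more cleanly working from the Steklov-averaged form of \eqref{weakform} and then letting the spatial mollification tend to the identity, one gets a relation involving $\iint \Ex{v}\,\partial_t(\zeta(v^\beta-w^\beta))$. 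The term $\partial_t w^\beta$ is legitimately in $L^{(\beta+1)/\beta}(\Omega_T)$ because $w\in\mathcal V$, so the only time derivative that needs regularization is the one falling on $v$; I would integrate that term by parts against $\Ex{\cdot}$ and use property (ii) to turn $\iint \partial_t\Ex{v}\,(\zeta(v^\beta-w^\beta))$ into $\tfrac1h\iint (v-\Ex{v})\zeta(v^\beta-w^\beta)$.

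The heart of the matter is the limit $h\to0$ of the time terms. After rearranging, the $v$-time contribution should be of the shape
\begin{align*}
-\iint_{\Omega_T}\zeta'\,\big[\tfrac1{\beta+1}v^{\beta+1}\big] + \iint_{\Omega_T}\zeta\,\partial_t w^\beta\,(v-w) + (\text{error}_h),
\end{align*}
and one recognizes $\b[v,w]$ upon writing $\b[v,w]=\tfrac1{\beta+1}v^{\beta+1}-w^\beta v + \tfrac{\beta}{\beta+1}w^{\beta+1}$ and noting that the terms built purely out of $w$ integrate against $\zeta'$ to something that combines with $\zeta\partial_t w^\beta$ — here one uses that $w^\beta\in L^p(W^{1,p}_0)$ with $\partial_t w^\beta\in L^{(\beta+1)/\beta}$ so that $t\mapsto\int_\Omega\tfrac{\beta}{\beta+1}w^{\beta+1}$ is absolutely continuous with derivative $\int_\Omega w\,\partial_t w^\beta$, and $\zeta(0)=\zeta(T)=0$ kills the boundary contributions. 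The error terms, such as $\tfrac1h\iint(v-\Ex{v})\zeta w^\beta$ and the mismatch between $\Ex{v}$ and $v$ in the other factors, are controlled by property (i) ($L^p$-contraction and $L^p$-convergence of $\Ex{v}$) together with the integrability $v\in L^{\beta+1}(\Omega_T)$ and $v^\beta\in L^p(\Omega_T)$; these ensure $\Ex{v}\to v$ and the products converge. The elliptic term $A(v,\nabla v^\beta)\cdot(\nabla v^\beta-\nabla w^\beta)$ and the source term $f(v^\beta-w^\beta)$ pass to the limit directly once the spatial mollification is removed, using $A(v,\nabla v^\beta)\in L^{p'}(\Omega_T)$ (from the structure of $A$ in \eqref{def:A}, the assumption $z\in W^{1,\sigma\beta p}$, and $\nabla v^\beta\in L^p$) paired with $\nabla v^\beta-\nabla w^\beta\in L^p(\Omega_T)$, and $f\in L^{\sigma p'}$ paired with $v^\beta-w^\beta\in L^p$.

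The main obstacle I anticipate is the bookkeeping in the time integration by parts: producing the combination $\b[v,w]$ requires carefully pairing the mollified $v$ against the non-mollified $\partial_t w^\beta$ (and vice versa) in just the right way so that the symmetric-looking quadratic expression emerges, and checking that every cross term that is not part of $\b[v,w]$ either cancels by the chain rule for the $w$-only terms or vanishes as $h\to0$. A secondary technical point is justifying the spatial mollification step: one must know $v^\beta-w^\beta\in L^p(0,T;W^{1,p}_0(\Omega))$ so that its spatial mollifications are admissible in \eqref{weakform} and converge in $L^p(W^{1,p})$, which holds since both $v^\beta$ and $w^\beta$ lie in $L^p(0,T;W^{1,p}_0(\Omega))$ by hypothesis. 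Once these two points are handled, collecting terms gives exactly \eqref{ske}.
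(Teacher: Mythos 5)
Your general plan (mollify in time, test, pass to the limit) is on the right track, but there are two genuine gaps, and they are exactly where the paper's argument is nontrivial.

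First, the object you propose to mollify is wrong. You want to mollify $v$ and then move the time derivative to obtain $\iint \partial_t\Ex{v}\,\zeta(v^\beta-w^\beta)$. The integration by parts that leads to that term requires $\partial_t v^\beta$ (or $\partial_t v$) to exist as a distribution with enough integrability; it does not. The paper avoids this entirely by putting the mollification of $v^\beta$ \emph{inside the test function}: $\varphi_h=\zeta(w^\beta-\Ex{v^\beta})$. Then $\varphi_h\in L^p(0,T;W^{1,p}_0(\Omega))$ with $\partial_t\varphi_h\in L^{\frac{\beta+1}{\beta}}(\Omega_T)$ (using $\partial_t\Ex{v^\beta}=\tfrac1h(v^\beta-\Ex{v^\beta})$ and $\partial_t w^\beta\in L^{\frac{\beta+1}{\beta}}$), so it is admissible by approximation with $C^\infty_0$; the parabolic contribution is $\iint v\,\partial_t\varphi_h$, where $v$ is never differentiated. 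No spatial mollification is needed or used. Crucially, you must mollify $v^\beta$, not $v$: the term $\Ex{v^\beta}^{\frac1\beta}\partial_t\Ex{v^\beta}=\tfrac{\beta}{\beta+1}\partial_t\big(\Ex{v^\beta}\big)^{\frac{\beta+1}{\beta}}$ is an exact time derivative and can be integrated by parts, and the remainder $(\Ex{v^\beta}^{\frac1\beta}-v)\,\partial_t\Ex{v^\beta}=\tfrac1h(\Ex{v^\beta}^{\frac1\beta}-v)(v^\beta-\Ex{v^\beta})$ has a definite sign by monotonicity of $s\mapsto s^\beta$. Your replacement term $\tfrac1h(v-\Ex{v})\zeta(v^\beta-w^\beta)$ involves the arbitrary comparison function $w^\beta$ and has no sign and no obvious limit as $h\downarrow 0$; the ``error terms $\ldots$ are controlled by property~(i)'' claim does not hold here, because $1/h$ blows up exactly as fast as $v-\Ex{v}$ vanishes.

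Second, you present the argument as if a direct equality emerges in the limit, but it does not. With the forward mollification $\Ex{\cdot}$, the sign argument above yields only the inequality ``$\leq$'' in \eqref{ske}; the reverse inequality ``$\geq$'' comes from repeating the argument with the backward mollification $\Exx{\cdot}$, where the corresponding remainder has the opposite sign. Without this two-sided step the identity \eqref{ske} is not established. Your algebraic bookkeeping for $\b[v,w]$ (using $\b[v,w]=\tfrac1{\beta+1}v^{\beta+1}-w^\beta v+\tfrac{\beta}{\beta+1}w^{\beta+1}$ and the absolute continuity of $t\mapsto\int_\Omega\tfrac{\beta}{\beta+1}w^{\beta+1}$ via $\partial_t w^\beta$) is fine and matches the paper's, and your treatment of the elliptic and source terms is correct. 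But the two points above — mollifying $v^\beta$ inside the test function to avoid any derivative on $v$, and running the argument once with $\Ex{\cdot}$ and once with $\Exx{\cdot}$ to convert the one-sided estimate into an identity — are the essential ideas, and your proposal is missing both.
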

\begin{proof}[Proof]
We use \eqref{weakform} with the test function $\varphi_h=\zeta(w^\beta-\Ex{v^\beta})$. This is possible since we can find $\varphi_{h,j}\in C^\infty_0(\Omega_T)$ such that $\varphi_{h,j}\to \varphi_h$ in $L^p(0,T;W^{1,p}(\Omega))$ and $\partial_t \varphi_{h,j} \to \partial_t \varphi_h$ in $L^\frac{\beta+1}{\beta}(\Omega_T)$. We see immediately that
\begin{align}\label{simple_h_lims}
\lim_{h\downarrow 0} \iint_{\Omega_T}A(v,\nabla v^\beta) \cdot \nabla \varphi_h \d x \d t = 
\iint_{\Omega_T}A(v,\nabla v^\beta) \cdot (\nabla w^\beta -\nabla v^\beta)\zeta  \d x \d t,
\end{align}
and
\begin{align}\label{simple_h_lims_f}
\lim_{h\downarrow 0} \iint_{\Omega_T}f \varphi_h \d x \d t = 
\iint_{\Omega_T} f \zeta (w^\beta-v^\beta)\d x\d t.
\end{align}
The parabolic term is treated as follows.
\begin{align*}
\iint_{\Omega_T} & v\partial_t \varphi_h \d x\d t 
\\
&= \iint_{\Omega_T} \zeta' v (w^\beta-\Ex{v^\beta})  + \zeta v(\partial_t w^\beta-\partial_t \Ex{v^\beta})\d x\d t 
\\
&= \iint_{\Omega_T} \zeta v\partial_t w^\beta - \zeta\Ex{v^\beta}^\frac{1}{\beta}\partial_t \Ex{v^\beta} + \zeta'  v (w^\beta-\Ex{v^\beta}) + \zeta (\Ex{v^\beta}^\frac{1}{\beta}-v)\partial_t \Ex{v^\beta}\d x\d t 
\\
&\leq \iint_{\Omega_T} \zeta v \partial_t w^\beta + \zeta' \tfrac{\beta}{\beta+1}\Ex{v^\beta}^\frac{\beta+1}{\beta} + \zeta' v (w^\beta-\Ex{v^\beta}) \d x\d t 
\end{align*}
In the third step we have used Lemma \ref{expmolproperties} (ii) to conclude that the last term is nonpositive, and the second term has been integrated by parts. The limit of the parabolic term as $h\to 0$ exists due to \eqref{simple_h_lims}, \eqref{simple_h_lims_f} and \eqref{weakform}, and satisfies the estimate
\begin{align}\label{parab_term_h_lim}
\notag \lim_{h\downarrow 0} \iint_{\Omega_T}  v\partial_t & \varphi_h \d x\d t \leq  \iint_{\Omega_T} \zeta v \partial_t w^\beta + \zeta' \big(\tfrac{\beta}{\beta+1}v^{\beta+1} +  v (w^\beta-v^\beta) \big) \d x\d t 
\\
\notag &= \iint_{\Omega_T} \zeta (v-w) \partial_t w^\beta + \zeta \tfrac{\beta}{\beta+1}\partial_t (w^\beta)^\frac{\beta+1}{\beta} + \zeta' \big(\tfrac{\beta}{\beta+1}v^{\beta+1} +   v (w^\beta-v^\beta) \big) \d x\d t 
\\
&= \iint_{\Omega_T} \zeta (v-w) \partial_t w^\beta - \zeta' \b[v,w] \d x \d t.
\end{align}
In the last step we integrate the second term by parts. Combining \eqref{simple_h_lims}, \eqref{simple_h_lims_f} and \eqref{parab_term_h_lim} we have verified ``$\leq$" in \eqref{ske}. The reverse inequality follows in a similar way by taking $\varphi=\zeta(w^\beta-\Exx{v^\beta})$ as a test function.
\end{proof}
Using the previous lemma we can now conclude the time continuity. 
\begin{lem}\label{lem:time-cont}
Every $v\in L^{\beta+1}(\Omega_T;\R_{\geq 0})$ with $v^\beta \in L^p(0,T;W^{1,p}_0(\Omega))$ satisfying \eqref{weakform} has a representative in $C([0,T];L^{\beta+1}(\Omega))$. 
\end{lem}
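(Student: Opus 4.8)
The plan is to show that suitable time–mollifications of $v$ converge to $v$ \emph{uniformly in time} in $L^{\beta+1}(\Omega)$, and then to glue the resulting continuous representatives together. Write $g:=v^\beta$ and, for $h\in(0,T)$, set $v_h:=(\Ex g)^{\frac1\beta}$ and $\bar v_h:=(\Exx g)^{\frac1\beta}$. Since $v\in L^{\beta+1}(\Omega_T;\R_{\ge0})$ we have $g\in L^{\frac{\beta+1}{\beta}}(\Omega_T)$, so by Lemma~\ref{expmolproperties} the functions $v_h^\beta=\Ex g$ and $\bar v_h^\beta=\Exx g$ lie in $L^p(0,T;W_0^{1,p}(\Omega))$, converge there to $g$, and have weak time derivatives $\partial_t v_h^\beta=\tfrac1h(g-\Ex g)$ and $\partial_t\bar v_h^\beta=\tfrac1h(\Exx g-g)$ in $L^{\frac{\beta+1}{\beta}}(\Omega_T)$; moreover $v_h,\bar v_h\in C([0,T];L^{\beta+1}(\Omega))$ because $s\mapsto s^{1/\beta}$ maps $L^{\frac{\beta+1}{\beta}}(\Omega)$ continuously into $L^{\beta+1}(\Omega)$. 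Hence $v_h,\bar v_h\in\mathcal V$, so both are admissible as $w$ in Lemma~\ref{lem:weakform_2}.

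Next I would derive an energy identity. Taking $w=v_h$ in \eqref{ske} and $\zeta=\zeta_\epsilon$ a trapezoidal Lipschitz function vanishing on $[0,t_1]\cup[t_2,T]$, affine on $[t_1,t_1+\epsilon]$ and $[t_2-\epsilon,t_2]$ and equal to $1$ in between (with $0<t_1<t_2<T$), one checks that every integrand on the right of \eqref{ske} lies in $L^1(\Omega_T)$ — here one uses $\nabla v_h^\beta,v_h^\beta\in L^p(\Omega_T)$, $\partial_t v_h^\beta\in L^{\frac{\beta+1}{\beta}}(\Omega_T)$, $v-v_h\in L^{\beta+1}(\Omega_T)$, and, from \eqref{assumption:data} together with $v^\beta\in L^p(0,T;W^{1,p}(\Omega))$, that $A(v,\nabla v^\beta)\in L^{p'}(\Omega_T)$ and $f\in L^{p'}(\Omega_T)$. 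Letting $\epsilon\downarrow0$ and choosing $t_1,t_2$ among the Lebesgue points of the $L^1(0,T)$ function $t\mapsto\int_\Omega\b[v(t),v_h(t)]\,dx$ yields
\begin{align*}
\int_\Omega\b[v(t_2),v_h(t_2)]\,\d x-\int_\Omega\b[v(t_1),v_h(t_1)]\,\d x
&=-\int_{t_1}^{t_2}\!\!\int_\Omega\Big[\partial_t v_h^\beta\,(v-v_h)\\
&\qquad+A(v,\nabla v^\beta)\cdot(\nabla v^\beta-\nabla v_h^\beta)-f\,(v^\beta-v_h^\beta)\Big]\d x\d t ,
\end{align*}
so that $Y_h(t):=\int_\Omega\b[v(t),v_h(t)]\,dx$ has an absolutely continuous representative on $[0,T]$. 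The crucial structural facts are $\b[\cdot,\cdot]\ge0$ (Lemma~\ref{estimates:boundary_terms}) and
\[
\partial_t v_h^\beta\,(v-v_h)=\tfrac1h\big(g-\Ex g\big)\big(g^{1/\beta}-(\Ex g)^{1/\beta}\big)\ge0 ,
\]
which holds because $s\mapsto s^\beta$ and $s\mapsto s^{1/\beta}$ are nondecreasing on $[0,\infty)$; for $\bar v_h$ the analogous term $\partial_t\bar v_h^\beta(v-\bar v_h)$ is instead $\le0$.

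Then I would exploit these signs. Put $\rho_h:=\norm{A(v,\nabla v^\beta)}_{L^{p'}(\Omega_T)}\norm{\nabla v^\beta-\nabla v_h^\beta}_{L^p(\Omega_T)}+\norm{f}_{L^{p'}(\Omega_T)}\norm{v^\beta-v_h^\beta}_{L^p(\Omega_T)}$, which tends to $0$ by Lemma~\ref{expmolproperties}(iii). The identity and $\partial_t v_h^\beta(v-v_h)\ge0$ give $Y_h(t_2)\le Y_h(t_1)+\rho_h$ whenever $t_1\le t_2$, hence $\sup_{t\in[\tau,T]}Y_h(t)\le Y_h(\tau)+\rho_h$ for every $\tau\in(0,T)$. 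For a.e.\ $\tau$ we have $Y_h(\tau)=\int_\Omega\b[v(\tau),v_h(\tau)]\,dx$, which by Lemma~\ref{estimates:boundary_terms}(iii) is at most $c\,\norm{v^\beta(\tau)-\Ex g(\tau)}_{L^{(\beta+1)/\beta}(\Omega)}^{(\beta+1)/\beta}$, and this tends to $0$ as $h\to0$ if $\tau$ is also a Lebesgue point of $s\mapsto v^\beta(\cdot,s)\in L^{(\beta+1)/\beta}(\Omega)$, since then $\Ex g(\tau)\to v^\beta(\tau)$ in $L^{(\beta+1)/\beta}(\Omega)$. Thus $\sup_{[\tau,T]}Y_h\to0$ for a.e.\ $\tau$, hence (monotonicity in $\tau$) for every $\tau>0$. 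By Lemma~\ref{estimates:boundary_terms}(ii), $\sup_{t\in[\tau,T]}\norm{v(t)-v_h(t)}_{L^{\beta+1}(\Omega)}^{\beta+1}\le c\sup_{[\tau,T]}Y_h\to0$, so $(v_h)_h$ is Cauchy in $C([\tau,T];L^{\beta+1}(\Omega))$ and its limit is a representative of $v$ continuous on $(0,T]$. The symmetric argument with $\bar v_h$ — whose energy is \emph{almost nondecreasing} — gives a representative continuous on $[0,T)$. On $(0,T)$ both agree with $v$ a.e.\ and are continuous, hence coincide, and gluing yields a representative in $C([0,T];L^{\beta+1}(\Omega))$.

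The step I expect to be the main obstacle is the uniformity of the convergence \emph{up to the endpoints} $t=0$ and $t=T$: the forward mollification satisfies $v_h(0)=0$ irrespective of $v$, so it cannot converge uniformly near $t=0$, and dually $\bar v_h(T)=0$. This is exactly why the argument must be run with both mollifications — each behaves well at the endpoint where the other degenerates — and why the limits $h\to0$ are taken at generic interior Lebesgue points rather than at $0$ or $T$ themselves. A more routine but non-trivial point is verifying the integrability needed to legitimise the test function $w=v_h$ in Lemma~\ref{lem:weakform_2} and the passage $\epsilon\downarrow0$; this rests on \eqref{assumption:data} and on $v^\beta\in L^p(0,T;W^{1,p}(\Omega))$, as in the justification of Definition~\ref{weakdef}.
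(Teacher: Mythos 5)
Your proof is correct and follows essentially the same route as the paper's: both use the forward and reversed exponential mollifications together with Lemma~\ref{lem:weakform_2}, both exploit the favourable sign of $\partial_t w^\beta(v-w)$, both use Lemma~\ref{estimates:boundary_terms} together with Lemma~\ref{expmolproperties}(iii) to make the error go to zero, and both glue continuity from the two endpoints by switching mollifications. The only organisational difference is that you derive an almost-monotonicity estimate for $t\mapsto\int_\Omega\b[v(t),v_h(t)]\,\d x$ and anchor it at a Lebesgue point of the Bochner map $t\mapsto v^\beta(\cdot,t)$, whereas the paper tests \eqref{ske} with $\zeta=\chi^\tau_\varepsilon\psi$ for a fixed cutoff $\psi$ supported in $[0,\tfrac34T]$, which produces a bound on $\int_\Omega\b[v,w](\tau)\,\d x$ that is already uniform in $\tau\in(0,T/2)$ and thereby avoids the Lebesgue-differentiation step.
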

\begin{proof}[Proof]
Take $\psi\in C^\infty(\R;[0,1])$ such that $\psi(t)=1$ for $t\leq \tfrac12 T$, $\psi(t)=0$ for $t>\tfrac34 T$ and $|\psi'|\leq \tfrac8T$. For $\tau \in (0, \tfrac12 T)$  and $\varepsilon>0$ so small that $\tau+\varepsilon < \tfrac12 T$, define
\begin{align*}
\chi^\tau_\varepsilon(t)=
\begin{cases}
0, & t<\tau \\
\frac1\varepsilon(t-\tau), & t\in [\tau, \tau+\varepsilon] \\
1, & t> \tau+\varepsilon.
\end{cases}
\end{align*}
Using identity \eqref{ske} from Lemma~\ref{lem:weakform_2} with $\zeta=\chi^\tau_\varepsilon\psi$ and $w=(\Exx{v^\beta})^\frac{1}{\beta}$ we obtain
\begin{align*}
\varepsilon^{-1}\int^{\tau+\varepsilon}_\tau \int_\Omega \b[v,w]\d x \d t &=  \iint_{\Omega_T}\big[A(v,\nabla v^\beta)\cdot (\nabla v^\beta-\nabla w^\beta) + (v-w)\partial_t w^\beta \big]\zeta \d x \d t
\\
&\quad +\iint_{\Omega_T} f (w^\beta-v^\beta)\zeta \d x\d t - \iint_{\Omega_T}\b[v,w]\psi' \d x \d t
\\
&\leq \iint_{\Omega_T}\big[|A(v,\nabla v^\beta)||\nabla v^\beta-\nabla \Exx{v^\beta} | + |f||\Exx{v^\beta}-v^\beta|\big] \d x \d t
\\
&\quad+\frac{c}{T}\iint_{\Omega_T} |v^\beta - \Exx{v^\beta} |^\frac{\beta+1}{\beta}\d x \d t.
\end{align*}
Here we used Lemma \ref{expmolproperties} (ii) to conclude that the term involving $\partial_t w^\beta$ is nonpositive. Also, we have made use of Lemma \ref{estimates:boundary_terms} (iii) to estimate $\b[v,w]$. Passing to the limit $\varepsilon\to 0$ we see that 
\begin{align*}
\int_\Omega \b[v,w](\cdot,\tau) \d x &\leq \iint_{\Omega_T}\big[|A(v,\nabla v^\beta)||\nabla v^\beta-\nabla \Exx{v^\beta} | + |f||\Exx{v^\beta}-v^\beta|\big] \d x \d t
\\
&\quad+\frac{c}{T}\iint_{\Omega_T} |v^\beta - \Exx{v^\beta} |^\frac{\beta+1}{\beta}\d x \d t
\end{align*}
for all $\tau\in (0,\tfrac{T}{2})\setminus N_h$ where $N_h$ is a set of measure zero. Using Lemma \ref{estimates:boundary_terms} (ii) we find a lower bound for the integrand on the left-hand side:
\begin{align*}
|v-w|^{\beta+1} \leq c\, \b[v,w].
\end{align*}
Taking now a sequence $h_j\downarrow 0$ and setting $w_j:= ( [\![v^\beta]\!]_{\bar h_j})^\frac{1}{\beta}$ and $N:=\cup N_{h_j}$ we see that
\begin{align}\label{unilim}
\lim_{j\to \infty} \sup_{t\in [0,\frac{T}{2}]\setminus N} \int_{\Omega} |v-w_j|^{\beta+1}(\cdot,t) \d x = 0
\end{align}
Observe that every map of the form $w=(\Exx{v^\beta})^\frac{1}{\beta}$ is continuous $[0,T]\to L^{\beta+1}(\Omega)$ since
\begin{align*}
|w(x,s)-w(x,t)|^{\beta+1}\leq |w^\beta (x,s)-w^\beta (x,t)|^\frac{\beta+1}{\beta}=|\Exx{v^\beta}(x,s) - \Exx{v^\beta}(x,t)|^\frac{\beta+1}{\beta},
\end{align*}
and $\Exx{v^\beta}$ is continuous $[0,T]\to L^\frac{\beta+1}{\beta}(\Omega)$ by Lemma \ref{expmolproperties} (iv). Because of the uniform limit \eqref{unilim}, $v$ has a representative which is continuous on $[0,\tfrac{T}{2}]\setminus N$ and since $N$ has measure zero we find a representative which is continuous on $[0,\tfrac{T}{2}]$. The continuity on $[\tfrac{T}{2},T]$ follows from a similar argument, but taking $w=(\Ex{v^\beta})^\frac{1}{\beta}$ and reflecting $\chi^\tau_\varepsilon$ and $\psi$ on the interval $[0,T]$.
\end{proof}

\section{Compactness results}\label{sec:compactness}

In this section we provide some compactness results which could be of interest in their own. They are consequences of the well known compactness results in \cite{JacquesSimon}. Such results are necessary when dealing with porous medium type equations and doubly nonlinear equations. For the purpose of this paper we only need Corollary~\ref{thm:JS-th6m}. Since there is not much extra effort to obtain other versions we preferred to include them as well for possible future applications. Some results in this direction were previously obtained for uniformly bounded functions that are piecewise constant in time, see \cite[Lemma~8]{Juengel} and the references therein.

For a family of functions $F \subset L^{1}(0,T;L^1(\Omega;\R^N))$ with $N\ge 1$ we denote 
$$
	F^m:=\big\{|f|^{m-1}f:f\in F\big\} \quad\mbox{for $m>0$.} 
$$
We start with an Arz\'ela-Ascoli type theorem. It will be the basis for all the other compactness results.
For a map $f:[0,T]\to X$ we define the translated function $\tau_h f:[0,T-h]\to X$ by $\tau_h f(t)=f(t+h)$. We can now formulate the following theorem \cite[\S\,3,~Theorem~1]{JacquesSimon}: 

\begin{theo}\label{thm:JS-th1}
Let $1\le p\le\infty$, $T>0$ and $B$ be a Banach space. Let $F\subset L^p(0,T;B)$. Then $F$ is relatively compact in $L^p(0,T;B)$ for $1\le p<\infty$, or in $C([0,T];B)$ for $p=\infty$ if and only if 
\begin{equation}\label{JS_1}
	\bigg\{\int_{t_1}^{t_2} f(t) \,\d t: f\in F\bigg\}
	\quad\mbox{is relatively compact in $B$,} 
	\quad\forall\,0<t_1<t_2<T
\end{equation}
and 
\begin{equation}\label{JS_2}
	\|\tau_h f - f\|_{L^p(0,T-h;B)}\to 0
	\quad\mbox{as $h\downarrow 0$,}
	\quad\mbox{uniformly for $f\in F$.}
\end{equation}
\end{theo}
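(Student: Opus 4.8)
The plan is to prove the two implications separately. For \emph{necessity}, suppose $F$ is relatively compact in $L^p(0,T;B)$ (resp.\ in $C([0,T];B)$ when $p=\infty$). Condition \eqref{JS_1} is immediate: for fixed $0<t_1<t_2<T$ the map $f\mapsto \int_{t_1}^{t_2} f(t)\,\d t$ is a bounded linear operator from $L^p(0,T;B)$ into $B$ (by H\"older's inequality and the basic properties of the Bochner integral), and bounded linear operators send relatively compact sets to relatively compact sets. For \eqref{JS_2} I would run a standard $3\varepsilon$-argument: given $\varepsilon>0$, cover $F$ by finitely many balls of radius $\varepsilon$ centered at $f_1,\dots,f_N\in L^p(0,T;B)$; since translation is continuous on $L^p(0,T;B)$ and the translation operator $\tau_h$ does not increase the $L^p$-norm, one can pick $h$ so small that $\|\tau_h f_i-f_i\|_{L^p(0,T-h;B)}<\varepsilon$ for every $i$, and then the triangle inequality gives $\|\tau_h f-f\|_{L^p(0,T-h;B)}<3\varepsilon$ uniformly for $f\in F$.

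The substantial direction is \emph{sufficiency}. Assume \eqref{JS_1} and \eqref{JS_2}. The idea is to approximate $F$ uniformly by families that are relatively compact for elementary reasons, via time-averaging. For $h\in(0,T)$ set $(\mathcal M_h f)(t):=\tfrac1h\int_t^{t+h} f(s)\,\d s$. First I would show, using only \eqref{JS_2} and the identity $(\mathcal M_h f)(t)-f(t)=\tfrac1h\int_0^h\big(f(t+r)-f(t)\big)\,\d r$ together with Minkowski's integral inequality, that for every fixed $\eta>0$ one has $\mathcal M_h f\to f$ in $L^p(0,T-\eta;B)$ as $h\downarrow0$, \emph{uniformly} for $f\in F$. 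Second, for fixed $h$ I would show that $\{\mathcal M_h f:f\in F\}$ is relatively compact in $C([\eta,T-\eta];B)$: its pointwise values lie, after rescaling by $1/h$, in the relatively compact subsets of $B$ furnished by \eqref{JS_1}, while equicontinuity in $t$ follows by controlling $(\mathcal M_h f)(t+\delta)-(\mathcal M_h f)(t)$ through averages of differences of the form $\int_a^{a+\delta} f$, which are uniformly small once one knows $F$ is bounded (and, for $p>1$, uniformly integrable) in $L^p(0,T;B)$ — a boundedness that itself must be extracted from \eqref{JS_1} and \eqref{JS_2}. Arzel\`a--Ascoli then gives the relative compactness in $C([\eta,T-\eta];B)$, hence in $L^p([\eta,T-\eta];B)$, and combining this with the uniform approximation shows that the restrictions of the elements of $F$ to $[\eta,T-\eta]$ form a totally bounded set in $L^p$.

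Finally I would remove the truncation: repeating the argument with the backward average $\tfrac1h\int_{t-h}^{t} f(s)\,\d s$ handles a left neighborhood of $t=T$, and a covering together with a diagonal argument over $\eta=\eta_j\downarrow0$ upgrades this local relative compactness to relative compactness in $L^p(0,T;B)$; in the case $p=\infty$ the same scheme carried out with sup-norm estimates yields relative compactness in $C([0,T];B)$. The main obstacle, in my view, is precisely the second step of the sufficiency part: establishing the equicontinuity of the averaged family, which forces one first to squeeze an appropriate boundedness and equi-integrability of $F$ in $L^p(0,T;B)$ out of the two hypotheses, and then to handle cleanly the interval $[T-h,T]$ that the forward average inevitably discards.
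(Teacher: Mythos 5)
The paper does not prove this theorem; it is quoted verbatim (up to notation) from J.~Simon's article, cited in the text as \cite[\S\,3, Theorem~1]{JacquesSimon}, and used as a black box. There is therefore no in-paper proof against which to compare your sketch. That said, your outline does reproduce the shape of Simon's own argument. For necessity, your two observations are correct: $f\mapsto\int_{t_1}^{t_2}f\,\d t$ is a bounded linear map $L^p(0,T;B)\to B$ for each fixed interior pair $(t_1,t_2)$, which gives \eqref{JS_1}, and \eqref{JS_2} follows by a $3\varepsilon$-argument from an $\varepsilon$-net, the fact that $\tau_h$ does not increase the $L^p(0,T-h;B)$-norm, and strong continuity of translation on each individual element of the net (for $p=\infty$ one uses that the centers may be taken in $C([0,T];B)$). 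For sufficiency, the averaging scheme $\mathcal M_h f(t)=\tfrac1h\int_t^{t+h}f(s)\,\d s$ is exactly Simon's device: \eqref{JS_2} plus Minkowski's integral inequality give $\mathcal M_hf\to f$ uniformly over $F$ on interior intervals, while \eqref{JS_1} gives pointwise relative compactness of $\{\mathcal M_hf(t):f\in F\}$ and Arzel\`a--Ascoli gives relative compactness of the averaged family in $C([\eta,T-\eta];B)$.

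You correctly single out, as the main obstacles, (a) extracting from \eqref{JS_1}--\eqref{JS_2} a uniform $L^p$-bound on $F$ (needed for the equicontinuity of $\{\mathcal M_hf\}$) and (b) passing from relative compactness on $[\eta,T-\eta]$ to relative compactness on all of $(0,T)$. Be aware that neither is as routine as the sketch suggests: it is not immediate that \eqref{JS_1}--\eqref{JS_2} alone bound $F$ in $L^p(0,T;B)$, and the bound has to be extracted by first replacing $f$ by $\mathcal M_hf$ up to a small uniform error and then uniformizing the slicewise bounds coming from \eqref{JS_1}; likewise the endpoint behaviour near $t=0,T$ requires a separate estimate rather than a mere diagonal argument. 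Simon treats both points with dedicated lemmas, and a self-contained write-up would need to reproduce them. For the present paper, though, the intended ``proof'' is simply the citation to \cite{JacquesSimon}.
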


%From \cite[Theorem~2]{JacquesSimon}: 
%
%\begin{theo}\label{thm:JS-th2}
%Let $1< p\le\infty$, $T>0$ and $B$ be a Banach space. $F$ is relatively compact in $L^{\tilde p}(0,T;B)$ for any $\tilde p< p$, if and only if \eqref{JS_1} and 
%\begin{equation}\label{JS_3}
%	\|\tau_h f - f\|_{L^1(0,T-h;B)}\to 0
%	\quad\mbox{as $h\downarrow 0$,}
%	\quad\mbox{uniformly for $f\in F$}
%\end{equation}
%are satisfied. 
%\end{theo}

Using Theorem \ref{thm:JS-th1} we obtain the following result.
\begin{theo}\label{thm:JS-th3m}
Let $m \in(0,\infty)$, $1\le p\le\infty$, $1\le q\le\infty$. Define $\theta:=\max\{1, m p\}$ and $\mu:=\max\{1,mq\}$. Let $T>0$ and let $\Omega\subset\R^n$ be a bounded domain. Let $X$ be a Banach space with compact embedding $X\hookrightarrow L^q(\Omega;\R^N)$. Moreover, let $F \subset L^{\theta}(0,T;L^\mu(\Omega;\R^N))$ such that $F^m\subset L^1(0,T;X)$. We assume that 
\begin{equation}\label{JS_3m_0}
	\mbox{$F^m$ is bounded in $L^{p}(0,T;L^q(\Omega;\R^N))$ if $m\ge 1$}
\end{equation}
and
\begin{equation}\label{JS_3m_1}
	\mbox{$F^m$ is bounded in $L^1(0,T;X)$}
\end{equation}
and 
\begin{equation}\label{JS_3m_2}
	\|\tau_h f - f\|_{L^\theta(0,T-h;L^\mu(\Omega;\R^N))}\to 0
	\quad\mbox{as $h\downarrow 0$,}
	\quad\mbox{uniformly for $f\in F$.}
\end{equation}
Then $F^m$ is relatively compact in $L^{p}(0,T;L^q(\Omega;\R^N))$ for $1\le p<\infty$. For $p=\infty$ it is relatively compact in $C([0,T];L^q(\Omega;\R^N))$. 
\end{theo}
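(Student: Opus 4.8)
The plan is to apply Theorem~\ref{thm:JS-th1} to the family $F^m$ with the Banach space $B=L^q(\Omega;\R^N)$: once conditions \eqref{JS_1} and \eqref{JS_2} are verified for $F^m$, that theorem yields exactly the asserted relative compactness, in $L^p(0,T;L^q)$ if $p<\infty$ and in $C([0,T];L^q)$ if $p=\infty$. That $F^m$ actually lies in the relevant space is assumption \eqref{JS_3m_0} when $m\ge 1$, while for $m\in(0,1)$ it follows from $F\subset L^\theta(0,T;L^\mu)$ and the identity $|g|=|f|^m$ together with H\"older's inequality on the bounded sets $\Omega$ and $(0,T)$, which absorbs the subcases $mq<1$ or $mp<1$ where $\mu=1$ or $\theta=1$.

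To verify \eqref{JS_1}, fix $0<t_1<t_2<T$. For every $g\in F^m$ the Bochner integral $\int_{t_1}^{t_2}g(t)\d t$ lies in $X$ with $\norm{\int_{t_1}^{t_2}g(t)\d t}_X\le\norm{g}_{L^1(0,T;X)}$, which is uniformly bounded by \eqref{JS_3m_1}; since the embedding $X\hookrightarrow L^q(\Omega;\R^N)$ is compact and commutes with the integral, the set $\{\int_{t_1}^{t_2}g\d t:g\in F^m\}$ is relatively compact in $L^q(\Omega;\R^N)$, as required.

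The heart of the matter is \eqref{JS_2}, i.e.\ transferring the uniform translation continuity \eqref{JS_3m_2} of $F$ in $L^\theta(0,T;L^\mu)$ to $F^m$ in $L^p(0,T;L^q)$. Writing $g=|f|^{m-1}f$ with $f\in F$, I would use the elementary bound
\begin{align*}
\big||a|^{m-1}a-|b|^{m-1}b\big|\le
\begin{cases}
c(m)\,|a-b|\,\big(|a|+|b|\big)^{m-1}, & m\ge 1,\\[3pt]
c(m)\,|a-b|^{m}, & 0<m<1,
\end{cases}
\end{align*}
valid for all $a,b\in\R^N$. When $0<m<1$ this gives $|\tau_h g-g|\le c\,|\tau_h f-f|^{m}$ pointwise, and integrating in space and then in time — using H\"older on $\Omega$, resp.\ on $(0,T)$, when $mq<1$, resp.\ $mp<1$ — leads to
\begin{align*}
\norm{\tau_h g-g}_{L^p(0,T-h;L^q)}\le c\,\norm{\tau_h f-f}_{L^\theta(0,T-h;L^\mu)}^{m},
\end{align*}
which tends to $0$ uniformly in $f\in F$ by \eqref{JS_3m_2}. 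When $m\ge 1$ one has $\mu=mq$ and $\theta=mp$, and H\"older's inequality — first in space with exponents $m$ and $\tfrac{m}{m-1}$, then in time with the same pair — yields
\begin{align*}
\norm{\tau_h g-g}_{L^p(0,T-h;L^q)}\le c\,\norm{\tau_h f-f}_{L^\theta(0,T-h;L^\mu)}\,\norm{|\tau_h f|+|f|}_{L^\theta(0,T-h;L^\mu)}^{m-1}.
\end{align*}
The last factor is bounded by $c\,\norm{f}_{L^\theta(0,T;L^\mu)}^{m-1}$, which is finite and uniform in $f$ because \eqref{JS_3m_0} gives $\norm{f}_{L^\theta(0,T;L^\mu)}=\norm{g}_{L^p(0,T;L^q)}^{1/m}$, so once more \eqref{JS_3m_2} drives the right-hand side to $0$ uniformly. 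For $p=\infty$ the same computations apply with the essential supremum in time and $\theta=\infty$.

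\textbf{Main obstacle.} The delicate step is \eqref{JS_2} for $F^m$, and within it the case $m\ge 1$: there the pointwise inequality produces the factor $\big(|\tau_h f|+|f|\big)^{m-1}$, and controlling it is precisely why the boundedness hypothesis \eqref{JS_3m_0} is imposed. Everything else — splitting according to whether $mp$ and $mq$ lie below or above $1$, and the bookkeeping with H\"older's inequality on the bounded domain and the bounded time interval — is routine.
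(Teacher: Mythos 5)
Your proposal is correct and follows essentially the same route as the paper: apply Theorem~\ref{thm:JS-th1} to $F^m$ with $B=L^q(\Omega;\R^N)$, verify \eqref{JS_1} from \eqref{JS_3m_1} and the compact embedding, and verify \eqref{JS_2} by splitting into $m<1$ (using the $m$-H\"older pointwise bound $\big||a|^{m-1}a-|b|^{m-1}b\big|\le c|a-b|^m$, then H\"older to pass from $L^{mq}$ to $L^\mu$ and from $L^{mp}$ to $L^\theta$) and $m\ge 1$ (using $\big||a|^{m-1}a-|b|^{m-1}b\big|\le c|a-b|(|a|+|b|)^{m-1}$, then H\"older in space and time with exponents $m,\ \tfrac{m}{m-1}$, and controlling the extra factor via \eqref{JS_3m_0} through the identity $\|f^m\|_{L^p(0,T;L^q)}=\|f\|_{L^\theta(0,T;L^\mu)}^m$). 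The only difference from the paper's write-up is cosmetic: the paper keeps the bound in terms of $\|f^m\|_{L^p(0,T-h;L^q)}$, whereas you record it as $\||\tau_h f|+|f|\|_{L^\theta(0,T-h;L^\mu)}^{m-1}$ and then convert via the same norm identity.
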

Before proceeding with the proof, we note that the assumption $F \subset L^{\theta}(0,T;L^\mu(\Omega;\R^N))$ guarantees that $F^m\subset L^p(0,T;L^q(\Omega;\R^N))$. In the case $m\ge1$ we additionally need to assume that $F^m$ is bounded in $L^p(0,T;L^q(\Omega;\R^N))$. The assumption $F^m\subset L^1(0,T;X)$ is to be understood in the following sense: For every $f\in F$, there is an element of $L^1(0,T;X)$ whose composition with the inclusion $X\hookrightarrow L^q(\Omega)$ coincides with $f^m\in L^p(0,T;L^q(\Omega;\R^N))$.

\begin{proof}[Proof of Theorem \ref{thm:JS-th3m}]
Our aim is to apply Theorem~\ref{thm:JS-th1} to the functions $F^m$ with $B=L^q(\Omega;\R^N)$. From \eqref{JS_3m_1} we know that $F^m$ is bounded in $L^1(t_1,t_2;X)$ for any $0<t_1<t_2<T$. Due to the compact embedding $X\subset L^q(\Omega;\R^N)$ this implies that $\{\int_{t_1}^{t_2} f^m(t) \,\d t: f^m\in F^m\}$ is relatively compact in $L^q(\Omega;\R^N)$ for any $0<t_1<t_2<T$. Hence \eqref{JS_1} is satisfied. Next, we verify assumption \eqref{JS_2}. Suppose first $p<\infty$. If $m\in(0,1)$, we have
\begin{align*}
	\|\tau_h f^m - f^m\|^p_{L^p(0,T-h;L^q(\Omega;\R^N))}
	&=
	\int_0^{T-h} 
	\bigg[\int_\Omega |f^m(t+h)-f^m(t)|^q \,\d x
	\bigg]^{\frac{p}{q}} \d t\\
	&\le
	c\int_0^{T-h} 
	\bigg[\int_\Omega |f(t+h)-f(t)|^{mq} \,\d x
	\bigg]^{\frac{p}{q}} \d t\\
	&\le
	c\int_0^{T-h} 
	\bigg[\int_\Omega |f(t+h)-f(t)|^{\mu} \,\d x
	\bigg]^{\frac{m p}{\mu}} \d t \\
	&\le
	c\,\|\tau_h f - f\|^{mp}_{L^\theta(0,T-h;L^\mu(\Omega;\R^N))}.
\end{align*}
Otherwise, if $m\ge 1$, then $\theta=mp$ and $\mu=mq$ and we compute
\begin{align*}
	&\|\tau_h f^m  - f^m\|^p_{L^p(0,T-h;L^q(\Omega;\R^N))}
	=
	\int_0^{T-h} 
	\bigg[\int_\Omega |f^m(t+h)-f^m(t)|^q \,\d x
	\bigg]^{\frac{p}{q}} \d t\\
	&\qquad\le
	c\int_0^{T-h} 
	\bigg[\int_\Omega |f(t+h)-f(t)|^q
	\big(|f(t+h)| + |f(t)|\big)^{(m-1)q} \,\d x
	\bigg]^{\frac{p}{q}} \d t\\
	&\qquad\le
	c\int_0^{T-h} 
	\bigg[\int_\Omega |f(t+h)-f(t)|^{\mu} \,\d x
	\bigg]^{\frac{p}{\mu}} 
	\bigg[\int_\Omega\big(|f(t+h)| + |f(t)|\big)^{mq} \,\d x
	\bigg]^{\frac{(m-1)p}{mq}}\d t \\
	&\qquad\le
	c\,\|\tau_h f - f\|^{\frac{\theta}{m}}_{L^\theta(0,T-h;L^\mu(\Omega;\R^N))} 
	\|f^m\|^{(1-\frac{1}{m})p}_{L^p(0,T-h;L^q(\Omega;\R^N))}.
\end{align*}
This ensures that also \eqref{JS_2} is satisfied. In the case $p=\infty$, one must again consider the two ranges for $m$. The calculations are similar. Therefore, Theorem~\ref{thm:JS-th1} yields the compactness of $F^m$ in $L^{p}(0,T;L^q(\Omega;\R^N))$ for $1\le p<\infty$, or in $C(0,T;L^q(\Omega;\R^N))$ for $p=\infty$. 
\end{proof} 

This interpolation lemma is from \cite[\S\,8,~Lemma~8]{JacquesSimon}.

\begin{lem}\label{lem:interpolation}
Let $B, X,Y$ be Banach spaces with $X\subset B\subset Y$ and compact embedding $X \hookrightarrow B$. Then, for any $\eta>0$ there exists $M_\eta>0$ such that 
$$
	\|v\|_B
	\le
	\eta\|v\|_X + M_\eta\|v\|_Y
	\quad\mbox{for any $v\in X$.}
$$
\end{lem}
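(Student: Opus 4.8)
The plan is to argue by contradiction, which is the classical route for interpolation (Ehrling-type) inequalities of this kind. Suppose the conclusion fails. Then there is an $\eta_0 > 0$ such that for every $n \in \N$ one can choose $v_n \in X$ with
\[
\|v_n\|_B > \eta_0 \|v_n\|_X + n \|v_n\|_Y .
\]
In particular $v_n \neq 0$, so upon replacing $v_n$ by $v_n/\|v_n\|_B$ we may assume $\|v_n\|_B = 1$ for all $n$. The displayed inequality then yields $\|v_n\|_X < \eta_0^{-1}$ and $\|v_n\|_Y < n^{-1}$; hence $(v_n)$ is bounded in $X$ and $v_n \to 0$ in $Y$.

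Next I would invoke the compact embedding $X \hookrightarrow B$: since $(v_n)$ is bounded in $X$, a subsequence (not relabeled) converges in $B$ to some $v \in B$, and continuity of the norm gives $\|v\|_B = 1$. At the same time the continuous inclusion $B \hookrightarrow Y$ transfers this convergence, so $v_n \to v$ in $Y$; combined with $v_n \to 0$ in $Y$ and uniqueness of limits in $Y$, this forces $v = 0$. Since the inclusion $B \hookrightarrow Y$ is injective, $v$ is the zero element of $B$, which contradicts $\|v\|_B = 1$. Therefore the asserted inequality holds, with $M_\eta$ taken to be the resulting (non-explicit) constant.

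There is no genuine obstacle here; the only point requiring care is the bookkeeping of the normalization together with the fact that the notation "$X\subset B\subset Y$" is understood to mean that the three spaces are nested via continuous injective embeddings — which is precisely what makes the passage from $B$-convergence to $Y$-convergence, and hence the identification of the limit, legitimate.
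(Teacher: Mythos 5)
Your proof is correct. The paper itself does not give an argument for this lemma; it simply cites \cite[\S\,8, Lemma~8]{JacquesSimon}. The contradiction-plus-normalization argument you present is the standard proof of Ehrling's lemma (and is essentially the one found in Simon's paper): you correctly extract a sequence normalized in $B$, use boundedness in $X$ and compactness of $X\hookrightarrow B$ to get a $B$-limit of norm one, then transfer that convergence through the continuous embedding $B\hookrightarrow Y$ and use $\|v_n\|_Y\to 0$ together with injectivity of the embedding to obtain the contradiction. Nothing is missing; in particular you are right to flag that the nested inclusions must be understood as continuous injective embeddings, as that is exactly what makes the identification of the limit work.
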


Let $X$ be a Banach space and let $q,\mu \in [1,\infty)$. Throughout the rest of this section we will consider compact embeddings $T:X\to L^q(\Omega;\R^N)$ and $S:L^{\mu'}(\Omega;\R^N)\to X'$ that are compatible in the following way. For any $v\in X$ for which it happens that the function $Tv$ belongs to $L^\mu(\Omega;\R^N)$ and for any $w\in L^{\mu'}(\Omega;\R^N)$ we have 
\begin{align}\label{operator_condition}
\int_\Omega Tv \cdot w\d x = \langle v, S w\rangle,
\end{align}
where $\langle\cdot,\cdot\rangle$ denotes the dual pairing of $X$ and $X'$.

We need the following more complicated version of Lemma~\ref{lem:interpolation}.

%==============================================================

\begin{lem}\label{lem:interpolation-PM}
Let $m\in(0,\infty)$, $p,q,\mu\in[1,\infty)$, $\theta:=\max\{1,m p\}$, $T>0$, and $\Omega\subset\R^n$ be a bounded domain and $X, Y$ be Banach spaces such that $X\subset L^q(\Omega;\R^N)$ and $L^{\mu'}(\Omega;\R^N)\subset X'\subset Y$ with compact embeddings $T: X \hookrightarrow L^q(\Omega;\R^N)$ and $S: L^{\mu'}(\Omega;\R^N)\hookrightarrow X'$ satisfying \eqref{operator_condition}. Then, for any $\eta>0$ there exists $M_\eta>0$ such that 
\begin{align*}
	&\|\tau_h f^m -f^m\|_{L^{p}(0,T-h;L^{q}(\Omega;\R^N))} \\
	&\quad\le
	\| f^m\|_{L^p(0,T;X)}^{\frac{m}{m+1}}
	\bigg[
	\eta \Big[\|f^m\|_{L^{p}(0,T;X)}^{\frac{1}{m+1}} + 
	\|f\|_{L^{\theta}(0,T;L^{\mu'}(\Omega;\R^N))}^{\frac{m}{m+1}} \Big] 
	+ 
	M_{\eta} \|\tau_h f-f\|_{L^{\theta}(0,T-h;Y)}^{\frac{m}{m+1}} 
	\bigg].
\end{align*}
for any $f\in L^{\theta}(0,T;L^{\mu'}(\Omega;\R^N))$ with $f^m\in L^p(0,T;X)\cap L^p(0,T;L^\mu(\Omega;\R^N))$.

Note that the assumption $f^m\in L^p(0,T;X)$ must be understood in the same sense as in Lemma \ref{thm:JS-th3m}. In the sequel we do not explicitly write out the embeddings. It will be clear from the context when they should be present.
\end{lem}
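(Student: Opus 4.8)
The plan is to decompose the difference $\tau_h f^m - f^m$ into two pieces: one controlled by the $X$-norm (hence by $\eta$ after interpolation) and one controlled by the $Y$-norm of $\tau_h f - f$. First I would fix $t$ and work with the spatial integral $\int_\Omega |f^m(t+h)-f^m(t)|^q\,\d x$. The key pointwise algebraic fact is that for $a,b\ge 0$ one has $\bigl||a|^{m-1}a - |b|^{m-1}b\bigr| \le c_m\,|a-b|\,(|a|+|b|)^{m-1}$ when $m\ge 1$, and $\bigl||a|^{m-1}a - |b|^{m-1}b\bigr|\le c_m|a-b|^m$ when $m\in(0,1)$. Using this together with H\"older's inequality in $x$ (splitting the exponents according to the $L^\mu$-integrability of $f$ that comes bundled with $f^m\in L^p(0,T;L^\mu)$) gives a bound of the integral in terms of a product of $\|\tau_h f - f\|_{L^{\mu'}}$-type quantities and $\|f^m(t)\|$-type quantities. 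This mirrors exactly the computation already carried out in the proof of Theorem~\ref{thm:JS-th3m}; the only new feature is that I want the factor measuring the time increment to be expressed through the \emph{weak} topology $Y$ rather than through $L^{\mu'}$, which is where Lemma~\ref{lem:interpolation} (via the operator $S$) enters.

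Second, the bridge to $Y$ is the compatibility condition \eqref{operator_condition}. For a fixed time slice, the spatial increment $g_h(t):=f(t+h)-f(t)\in L^{\mu'}(\Omega;\R^N)$ can be tested against elements of $L^\mu$; combined with the identity $\int_\Omega Tv\cdot w\,\d x = \langle v, Sw\rangle$ this lets me estimate an $L^{q'}$- or duality-type norm of $g_h(t)$ by $\|S g_h(t)\|_{X'}$, to which I then apply the interpolation inequality of Lemma~\ref{lem:interpolation} with the triple $X'\subset \text{(something)}\subset Y$: $\|Sg_h(t)\|_{B}\le \eta\|Sg_h(t)\|_{X'} + M_\eta\|Sg_h(t)\|_Y$. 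The first term is absorbed using the uniform $X'$-bound (which comes from the $L^{\mu'}$-bound on $f$ plus the boundedness of $S$), and the second is precisely what produces the $\|\tau_h f - f\|_{L^\theta(0,T-h;Y)}$ factor in the statement. One then integrates in $t$, using H\"older in time with exponents dictated by $\theta=\max\{1,mp\}$ to distribute the powers between the $L^p(0,T;X)$-norm of $f^m$, the $L^\theta$-norm of $f$, and the time-increment term, arriving at the exponents $\frac{m}{m+1}$ and $\frac{1}{m+1}$ that appear in the claimed inequality. The bracketed structure $\|f^m\|_{L^p(0,T;X)}^{m/(m+1)}[\,\cdots\,]$ is exactly what a single application of Young's inequality to two factors with conjugate exponents $\tfrac{m+1}{m}$ and $m+1$ yields.

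The main obstacle I expect is bookkeeping the exponents correctly across the two H\"older applications (one in $x$, one in $t$) so that the powers of the three norms match the statement, and making sure the $\eta$-term really does get split off cleanly: the subtlety is that the interpolation inequality Lemma~\ref{lem:interpolation} is applied pointwise in $t$ with a constant $M_\eta$ that does not depend on $t$, so after raising to a power and integrating one must check that $M_\eta$ survives as a multiplicative constant and that the $\eta$-term can be controlled by a uniform bound rather than by $\|\tau_h f-f\|$. A secondary technical point is the case distinction $m\ge 1$ versus $m\in(0,1)$ in the pointwise inequality for $|a|^{m-1}a-|b|^{m-1}b$; in the slow-diffusion regime $m<1$ the power $m$ lands directly on the increment and one avoids the extra $(|a|+|b|)^{m-1}$ factor, so that branch is actually easier, and I would treat $m\ge1$ as the representative case and remark that $m<1$ is analogous (and simpler), just as in the proof of Theorem~\ref{thm:JS-th3m}. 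Modulo these routine manipulations, the proof is a combination of the elementary inequalities for powers, H\"older, the compatibility identity \eqref{operator_condition}, and Lemma~\ref{lem:interpolation}.
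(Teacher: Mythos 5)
Your overall scaffolding---invoke the compatibility condition \eqref{operator_condition}, bring in the interpolation inequality of Lemma~\ref{lem:interpolation}, finish with a Young-type split to produce the $\frac{m}{m+1}$ and $\frac{1}{m+1}$ exponents---is pointing in the right direction, but there is a genuine gap at the heart of the argument.

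Your first paragraph proposes to bound $\int_\Omega|f^m(t+h)-f^m(t)|^q\,\d x$ by the pointwise inequality for $|a|^{m-1}a-|b|^{m-1}b$ plus H\"older in $x$, ``mirroring exactly'' the computation in Theorem~\ref{thm:JS-th3m}. That computation produces a bound in terms of $\|\tau_h f - f\|_{L^\theta(0,T-h;L^{\mu'})}$. You then plan to ``replace'' this $L^{\mu'}$-norm of the increment by a $Y$-norm via Lemma~\ref{lem:interpolation}. This replacement cannot work: the chain of inclusions is $L^{\mu'}(\Omega;\R^N)\subset X'\subset Y$, so the $Y$-norm is \emph{weaker} than the $L^{\mu'}$-norm, and the interpolation inequality of Lemma~\ref{lem:interpolation} only estimates the \emph{middle} norm ($X'$) from above by the two outer ones ($L^{\mu'}$ and $Y$). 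It gives no way to dominate $\|\tau_h f - f\|_{L^{\mu'}}$ by $\|\tau_h f - f\|_Y$. Your description of the interpolation triple (``$X'\subset(\text{something})\subset Y$'', with $\eta\|Sg_h(t)\|_{X'}$ appearing as the absorbable term) has the spaces in the wrong slots and reflects this confusion: the correct application has $L^{\mu'}$ in the role of $X$ in Lemma~\ref{lem:interpolation}, $X'$ in the role of $B$, and $Y$ as itself, giving $\|v\|_{X'}\le\eta\|v\|_{L^{\mu'}}+M_\eta\|v\|_Y$.

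What is missing is the mechanism by which the paper makes the $X'$-norm of $\tau_h f-f$---not its $L^{\mu'}$-norm---appear naturally. The paper does \emph{not} estimate $|\tau_h f^m-f^m|$ by an upper bound of the type $|a-b|(|a|+|b|)^{m-1}$. Instead it introduces the weight $(|\tau_h f^m|+|f^m|)^{\frac{1-m}{m}}|\tau_h f^m-f^m|^2$ and applies the \emph{monotonicity} estimate of Lemma~\ref{lem:monotone} (with the exponent $\frac{m+1}{m}$ in the role of $p$) to bound this weighted square from above by the bilinear pairing $(\tau_h f^m-f^m)\cdot(\tau_h f-f)$. Only after this step can \eqref{operator_condition} be used: the spatial integral $\int_\Omega(\tau_h f^m-f^m)\cdot(\tau_h f-f)\,\d x$ is identified with the duality pairing $\langle \tau_h f^m-f^m,\,S(\tau_h f-f)\rangle$, whence duality and H\"older in time give $\|\tau_h f^m-f^m\|_{L^p(0,T;X)}^{m/(m+1)}\|\tau_h f-f\|_{L^\theta(0,T-h;X')}^{m/(m+1)}$. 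It is precisely this $X'$-norm of the increment, reached through the pairing, that Lemma~\ref{lem:interpolation} then splits into an $\eta\cdot L^{\mu'}$-part (absorbed into $\|f\|_{L^\theta(L^{\mu'})}$) and an $M_\eta\cdot Y$-part. The remaining work (recovering $\|\tau_h f^m-f^m\|_{L^p(L^{(m+1)/m})}$ from the weighted quantity, the extra H\"older/Young step in the case $m>1$ using the interpolation identity \eqref{f_beta+1}, and the final passage from $L^{(m+1)/m}(\Omega)$ to $L^q(\Omega)$ via a second use of Lemma~\ref{lem:interpolation} when $q>\frac{m+1}{m}$) is bookkeeping, but the monotonicity-to-pairing step is the idea you need and do not have.
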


\begin{proof}[Proof]
We consider $v\in L^p(0,\tau;X)\cap L^p(0,\tau;L^\mu(\Omega;\R^N))$ (i.e. the composition of $v$ with the embedding $T$ results in a function belonging to $L^p(0,\tau;L^\mu(\Omega;\R^N))$) and $w\in L^{\theta}(0,\tau;L^{\mu'}(\Omega;\R^N))$ with $\tau\in(0,T]$. Since $L^{\mu'}(\Omega;\R^N)\subset X'$, we know that $\langle v(t),w(t)\rangle$ is defined for a.e.~$t\in(0,\tau)$, where $\langle\cdot,\cdot\rangle$ denotes the dual pairing of $X$ and $X'$. Hence, we may calculate
\begin{align}\label{identify}
	\bigg[\int_0^{\tau} \bigg|\int_\Omega
	v \cdot w \,\d x \bigg|^{\frac{\theta p}{\theta+p}} \d t \bigg]^{\frac1p}
	&=
	\bigg[\int_0^{\tau} 
	|\langle v, w\rangle |^{\frac{\theta p}{\theta+p}}
	\,\d t \bigg]^{\frac1p} \nonumber\\
	&\le
	\bigg[\int_0^{\tau} \| v\|_X^{\frac{\theta p}{\theta+p}} 
	\|w\|_{X'}^{\frac{\theta p}{\theta+p}}
	\,\d t \bigg]^{\frac1p}  \nonumber\\
	&\le
	\| v\|_{L^p(0,\tau;X)}^{\frac{\theta}{\theta+p}} 
	\|w\|_{L^\theta(0,\tau;X')}^{\frac{\theta}{\theta+p}} ,
\end{align}
where in the last line we applied H\"older's inequality. Here we have omitted the embeddings $S$ and $T$ to simplify the notation. In particular, choosing $v=f^m$ and $w=f$ in \eqref{identify} yields
\begin{align}\label{f_beta+1}
	\bigg[\int_0^{T} \bigg[\int_\Omega
	|f|^{m+1} \,\d x \bigg]^{\frac{\theta p}{\theta+p}} \d t \bigg]^{\frac1p}
	&\le
	\|f^m\|_{L^p(0,T;X)}^{\frac{\theta}{\theta+p}} 
	\|f\|_{L^\theta(0,T;X')}^{\frac{\theta}{\theta+p}} \nonumber\\
	&\le
	c\, \|f^m\|_{L^p(0,T;X)}^{\frac{\theta}{\theta+p}} 
	\|f\|_{L^\theta(0,T;L^{\mu}(\Omega;\R^N))}^{\frac{\theta}{\theta+p}} .
\end{align}
For $t\in [0,T-h]$, set $\Omega^t=\Omega \cap \{\tau_h f^m(t)\neq f^m(t)\}$. Applying in turn H\"older's inequality, Lemma~\ref{lem:monotone} and \eqref{identify} with the choice $v=\tau_h f^m-f^m$ and $w=\tau_h f-f$ leads to 
\begin{align*}%\label{tauh_f}
	\mbox{I}
	&:=
	\bigg[\int_0^{T-h} \bigg|\int_{\Omega^t} 
	\big(|\tau_hf^m| + |f^m|\big)^{\frac{1-m}{m}} 
	|\tau_hf^m-f^m|^{2} \,\d x\bigg|^{\frac{m p}{m+1}} \d t
	\bigg]^{\frac1p} \nonumber\\
	&\le 
	T^{\frac{\theta-m p}{p\theta(m+1)}}
	\bigg[\int_0^{T-h} \bigg|\int_{\Omega^t} 
	\big(|\tau_hf^m| + |f^m|\big)^{\frac{1-m}{m}} 
	\big|\tau_hf^m-f^m\big|^{2} \,\d x\bigg|^{\frac{\theta p}{\theta+p}} \d t
	\bigg]^{\frac1p\cdot \frac{m(\theta+p)}{\theta(m+1)}} \nonumber\\
	&\le
	c\bigg[\int_0^{T-h} \bigg|\int_\Omega
	\big(\tau_h f^m-f^m\big) \cdot (\tau_h f-f) 
	\,\d x \bigg|^{\frac{\theta p}{\theta+p}} \d t 
	\bigg]^{\frac1p\cdot \frac{m(\theta+p)}{\theta(m+1)}} \nonumber\\
	&\le
	c\,\| f^m\|_{L^p(0,T;X)}^{\frac{m}{m+1}} 
	\|\tau_h f-f\|_{L^\theta(0,T-h;X')}^{\frac{m}{m+1}},
\end{align*}
for some constant $c=c(m)\ge 1$. The set $\Omega^t$ was introduced instead of $\Omega$ to avoid dividing by zero in the case $m>1$.
Now, let $\eta_1>0$. By Lemma~\ref{lem:interpolation} there exists $M_{\eta_1}>0$ such that 
\begin{align*}
	\mbox{I}
	&\le
	c\,\| f^m\|_{L^p(0,T;X)}^{\frac{m}{m+1}}
	\bigg[
	\eta_1^{\frac{m}{m+1}}
	\|\tau_h f-f\|_{L^{\theta}(0,T-h;L^{\mu'}(\Omega;\R^N))}^{\frac{m}{m+1}} + 
	M_{\eta_1}^{\frac{m}{m+1}} 
	\|\tau_h f-f\|_{L^{\theta}(0,T-h;Y)}^{\frac{m}{m+1}}
	\bigg] \\
	&\le
	c\,\| f^m\|_{L^p(0,T;X)}^{\frac{m}{m+1}}
	\bigg[
	\eta_1^{\frac{m}{m+1}}
	\|f\|_{L^{\theta}(0,T;L^{\mu'}(\Omega;\R^N))}^{\frac{m}{m+1}} + 
	M_{\eta_1}^{\frac{m}{m+1}} 
	\|\tau_h f-f\|_{L^{\theta}(0,T-h;Y)}^{\frac{m}{m+1}}
	\bigg],
\end{align*}
where $c=c(m,p)$.  
If $m\le 1$ we therefore immediately conclude that 
\begin{align*}
	\|\tau_h f^m & -f^m\|_{L^{p}(0,T-h;L^{\frac{m+1}{m}}(\Omega;\R^N))} 
	\le c\, \mbox{I}\\
%	&\le
%	c \bigg[\int_0^{T-h} \bigg|\int_{\Omega^t}
%	\big(|\tau_hf^m| + |f^m|\big)^{\frac{1-m}{m}} 
%	|\tau_hf^m-f^m|^{2} \,\d x\bigg|^{\frac{m p}{m+1}} \d t
%	\bigg]^{\frac1p} \\
	&\le
	c\, \| f^m\|_{L^p(0,T;X)}^{\frac{m}{m+1}}
	\bigg[
	\eta_1^{\frac{m}{m+1}}
	\|f\|_{L^{\theta}(0,T;L^{\mu'}(\Omega;\R^N))}^{\frac{m}{m+1}} + 
	M_{\eta_1}^{\frac{m}{m+1}} 
	\|\tau_h f-f\|_{L^{\theta}(0,T-h;Y)}^{\frac{m}{m+1}} 
	\bigg],
\end{align*}
where $c=c(m,p)$.
If $m> 1$ we use H\"older's inequality to estimate $\mbox{I}$ from below and get with the abbreviation $F:=(|\tau_h f^m| + |f^m|)^{\frac{1}{m}}$, inequality \eqref{f_beta+1} and Young's inequality that
\begin{align*}
	&\|\tau_h f^m -f^m\|_{L^{p}(0,T-h;L^{\frac{m+1}{m}}(\Omega;\R^N))} \\
	&\ =
	\bigg[\int_0^{T-h} \bigg[\int_{\Omega^t} 
	F^{\frac{(m+1)(m-1)}{2m}}F^{\frac{(m+1)(1-m)}{2m}}
	|\tau_h f^m-f^m|^{\frac{m+1}{m}} 
	\,\d x\bigg]^{\frac{m p}{m+1}} \d t \bigg]^\frac1p\\
	&\ \le
	\bigg[\int_0^{T-h} \bigg[\int_\Omega 
	F^{m+1} \,\d x\bigg]^{\frac{m p}{m+1}} \d t
	\bigg]^{\frac{m-1}{2m p}}
	\bigg[\int_0^{T-h} \bigg[\int_{\Omega^t}
	F^{1-m} |\tau_h f^m-f^m|^{2} 
	\,\d x\bigg]^{\frac{m p}{m+1}} \d t \bigg]^{\frac{m+1}{2m p}} \\
	&\ \le
	c \bigg[\int_0^T \bigg[
	\int_\Omega |f|^{m+1} \d x \bigg]^{\frac{m p}{m+1}}\d t
	\bigg]^{\frac{m-1}{2m p}} \cdot 
	\mbox{I}^{\frac{m+1}{2m}} \\
	&\ \le 
	c\, \|f^m\|_{L^p(0,T;X)}^{\frac{m}{m+1}} 
	\|f\|_{L^\theta(0,T;L^{\mu'}(\Omega;\R^N))}^{\frac{m-1}{2(m+1)}} 
	\Big[
	\eta_1^{\frac{1}{2}}
	\|f\|_{L^{\theta}(0,T;L^{\mu'}(\Omega;\R^N))}^{\frac{1}{2}} + 
	M_{\eta_1}^{\frac{1}{2}} 
	\|\tau_h f-f\|_{L^{\theta}(0,T-h;Y)}^{\frac{1}{2}}
	\Big] \\
	&\ \le
	c\, \| f^m\|_{L^p(0,T;X)}^{\frac{m}{m+1}}
	\Big[
	2\eta_1^{\frac{1}{2}}
	\|f\|_{L^{\theta}(0,T;L^{\mu'}(\Omega;\R^N))}^{\frac{m}{m+1}} + 
	\eta_1^{-\frac{(m-1)}{2(m+1)}} M_{\eta_1}^{\frac{m}{m+1}} 
	\|\tau_h f-f\|_{L^{\theta}(0,T-h;Y)}^{\frac{m}{m+1}} 
	\Big].
\end{align*}
We have also used the fact that $\theta=m p$ when $m>1$. Combining both cases and choosing $\eta_1$ in a suitable way, we conclude that for any $\eta_2>0$ there exists $M_{\eta_2}>0$ such that 
\begin{align*}
	\|\tau_h f^m & -f^m\|_{L^{p}(0,T-h;L^{\frac{m+1}{m}}(\Omega;\R^N))} \\
	&\le
	\| f^m\|_{L^p(0,T;X)}^{\frac{m}{m+1}}
	\Big[
	\eta_2
	\|f\|_{L^{\theta}(0,T;L^{\mu'}(\Omega;\R^N))}^{\frac{m}{m+1}} + 
	M_{\eta_2}
	\|\tau_h f-f\|_{L^{\theta}(0,T-h;Y)}^{\frac{m}{m+1}} 
	\Big].
\end{align*}
If $q\le \frac{m+1}{m}$ the asserted inequality follows by an application of H\"older's inequality. In the other case where $q> \frac{m+1}{m}$ we apply Lemma~\ref{lem:interpolation} again and find that for $\eta>0$ there exists $M_{\eta}>0$ such that
\begin{align*}
	\|\tau_h f^m & -f^m\|_{L^{p}(0,T-h;L^{q}(\Omega;\R^N))} \\
	&\le
	\tfrac12 \eta \|\tau_h f^m-f^m\|_{L^{p}(0,T-h;X)} + 
	M_{\eta} \|\tau_h f^m -f^m\|_{L^{p}(0,T-h;L^{\frac{m+1}{m}}(\Omega;\R^N))} \\
	&\le
	\eta \|f^m\|_{L^{p}(0,T;X)} \\
	&\quad+ 
	c M_{\eta}\| f^m\|_{L^p(0,T;X)}^{\frac{m}{m+1}}
	\Big[
	\eta_2
	\|f\|_{L^{\theta}(0,T;L^{\mu'}(\Omega;\R^N))}^{\frac{m}{m+1}} + 
	M_{\eta_2} 
	\|\tau_h f-f\|_{L^{\theta}(0,T-h;Y)}^{\frac{m}{m+1}} 
	\Big].
\end{align*}
At this point the claimed inequality follows by choosing $\eta_2$ so small that $cM_\eta\eta_2\le\eta$. 
\end{proof}

With these prerequisites at hand we are able to prove the following more refined version of Theorem~\ref{thm:JS-th3m}, where assumption \eqref{JS_3m_2} is weakened in the sense that $L^\mu(\Omega;\R^N)$ is replaced by a Banach space $Y$ with $X'\subset Y$.

\begin{theo}\label{thm:JS-th5m}
Let $m\in(0,\infty)$, $p,q,\mu\in[1,\infty)$, $\theta:=\max\{1,m p\}$, $T>0$, and $\Omega\subset\R^n$ be a bounded domain and $X,Y$ be Banach spaces such that $X\subset L^q(\Omega;\R^N)$ and $L^{\mu'}(\Omega;\R^N)\subset X'\subset Y$ with compact embeddings $X \hookrightarrow L^{q}(\Omega;\R^N)$ and $L^{\mu'}(\Omega;\R^N)\hookrightarrow X'$  satisfying \eqref{operator_condition}. 
Moreover, let $F \subset L^{\theta}(0,T;L^{\mu'}(\Omega;\R^N))$ such that $F^m\subset L^p(0,T;X)\cap L^p(0,T;L^\mu(\Omega;\R^N))$. We assume that 
\begin{equation}\label{JS_5m_0a}
	\mbox{$F$ is bounded in $L^\theta(0,T;L^{\mu'}(\Omega;\R^N))$}
\end{equation}
and 
\begin{equation}\label{JS_5m_1a}
	\mbox{$F^m$ is bounded in $L^p(0,T;X)$}.
\end{equation}
\begin{itemize}
\item[(i)]
If 
\begin{equation*}
	\|\tau_h f - f\|_{L^\theta(0,T-h;Y)}\to 0
	\quad\mbox{as $h\downarrow 0$,}
	\quad\mbox{uniformly for $f\in F$}
\end{equation*}
is satisfied, then $F^m$ is relatively compact in $L^{p}(0,T;L^q(\Omega;\R^N))$. 
\item[(ii)]
If 
\begin{equation*}
	\|\tau_h f - f\|_{L^1(0,T-h;Y)}\to 0
	\quad\mbox{as $h\downarrow 0$,}
	\quad\mbox{uniformly for $f\in F$}
\end{equation*}
is satisfied, then $F^m$ is relatively compact in $L^{\tilde p}(0,T;L^q(\Omega;\R^N))$ for any $\tilde p\in[1,p)$. 
\end{itemize}
\end{theo}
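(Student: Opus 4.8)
The plan is to deduce Theorem~\ref{thm:JS-th5m} from the Arzelà–Ascoli type criterion in Theorem~\ref{thm:JS-th1}, applied to the family $F^m$ with the Banach space $B = L^q(\Omega;\R^N)$, exactly as in the proof of Theorem~\ref{thm:JS-th3m}. Thus two conditions must be checked: the relative compactness of the time-integrals \eqref{JS_1}, and the equicontinuity of translates \eqref{JS_2} in the appropriate $L^p$ (or $C([0,T];B)$) norm. The novelty compared with Theorem~\ref{thm:JS-th3m} is that the uniform time-translation control is assumed only in the weaker norm $L^\theta(0,T-h;Y)$ rather than in $L^\theta(0,T-h;L^\mu(\Omega;\R^N))$, so the entire burden falls on upgrading this weak translation estimate to a translation estimate for $F^m$ in $L^p(0,T-h;L^q(\Omega;\R^N))$. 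This upgrade is precisely the content of Lemma~\ref{lem:interpolation-PM}.

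For part (i): condition \eqref{JS_1} is verified exactly as in the proof of Theorem~\ref{thm:JS-th3m}. We first note that Lemma~\ref{lem:interpolation-PM} applied with $Y$ replaced by the compactly embedded intermediate space, or more directly a standard interpolation, is not needed here; instead we argue as follows. The boundedness \eqref{JS_5m_1a} of $F^m$ in $L^p(0,T;X)$ gives, for $0<t_1<t_2<T$, a bound on $\{\int_{t_1}^{t_2} f^m(t)\,\d t : f\in F\}$ in $X$, and since $X\hookrightarrow L^q(\Omega;\R^N)$ is compact, this set is relatively compact in $L^q(\Omega;\R^N)$; hence \eqref{JS_1} holds. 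For \eqref{JS_2} we invoke Lemma~\ref{lem:interpolation-PM}: for every $\eta>0$ there is $M_\eta$ with
\begin{align*}
	\|\tau_h f^m - f^m\|_{L^p(0,T-h;L^q(\Omega;\R^N))}
	\le
	\|f^m\|_{L^p(0,T;X)}^{\frac{m}{m+1}}
	\Big[
	\eta\big(\|f^m\|_{L^p(0,T;X)}^{\frac{1}{m+1}} + \|f\|_{L^\theta(0,T;L^{\mu'})}^{\frac{m}{m+1}}\big)
	+ M_\eta \|\tau_h f - f\|_{L^\theta(0,T-h;Y)}^{\frac{m}{m+1}}
	\Big].
\end{align*}
Using the uniform bounds \eqref{JS_5m_0a} and \eqref{JS_5m_1a}, the first bracketed term is uniformly small in $f\in F$ once $\eta$ is small, and for fixed $\eta$ the second term tends to zero uniformly in $f\in F$ as $h\downarrow 0$ by the hypothesis of (i). A standard $\varepsilon/2$ argument then gives \eqref{JS_2}. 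Theorem~\ref{thm:JS-th1} therefore yields relative compactness of $F^m$ in $L^p(0,T;L^q(\Omega;\R^N))$ for $1\le p<\infty$.

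For part (ii): here the uniform translation control in time is assumed only in $L^1(0,T-h;Y)$, not in $L^\theta$, so a direct application of Theorem~\ref{thm:JS-th1} with exponent $p$ is no longer available. The strategy is to fix $\tilde p\in[1,p)$, apply part (i) with $p$ replaced by $\tilde p$ (noting that boundedness in $L^p$ implies boundedness in $L^{\tilde p}$ on a finite time interval, and that the hypotheses carry over since $\tilde\theta := \max\{1, m\tilde p\} \le \theta$, so $L^\theta$-boundedness of $F$ gives $L^{\tilde\theta}$-boundedness and the $L^{\tilde\theta}(0,T-h;Y)$-translation control follows from the $L^1$ control together with the uniform $L^\theta$-bound via interpolation in the time variable, since $1\le\tilde\theta<\theta$ up to the endpoint case which is handled separately). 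Once the hypotheses of (i) are met with exponent $\tilde p$, part (i) gives relative compactness of $F^m$ in $L^{\tilde p}(0,T;L^q(\Omega;\R^N))$.

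The main obstacle is the translation-norm upgrade in part (ii): converting ``$\|\tau_h f - f\|_{L^1(0,T-h;Y)}\to 0$ uniformly'' into ``$\|\tau_h f - f\|_{L^{\tilde\theta}(0,T-h;Y)}\to 0$ uniformly'' for $\tilde\theta<\theta$. This is a time-direction interpolation: writing $\|\tau_h f - f\|_{L^{\tilde\theta}}\le \|\tau_h f - f\|_{L^1}^{\lambda}\|\tau_h f - f\|_{L^\theta}^{1-\lambda}$ for a suitable $\lambda\in(0,1]$, the second factor is bounded uniformly by $2\sup_{f\in F}\|f\|_{L^\theta(0,T;Y)}$ (which is finite by \eqref{JS_5m_0a} and the embedding $L^{\mu'}\subset X'\subset Y$), while the first factor tends to zero uniformly in $f$; one must be mildly careful that this only yields $\tilde\theta<\theta$ strictly, which is exactly why part (ii) gives compactness only for $\tilde p<p$, and the endpoint $\tilde\theta=1$ is immediate. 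Everything else is routine bookkeeping with the already-established Lemma~\ref{lem:interpolation-PM} and Theorem~\ref{thm:JS-th1}.
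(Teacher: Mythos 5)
Your proof is correct and follows essentially the same route as the paper: part (i) applies Theorem~\ref{thm:JS-th1} to $F^m$ with $B=L^q(\Omega;\R^N)$, verifying \eqref{JS_1} via the compact embedding $X\hookrightarrow L^q$ and \eqref{JS_2} via Lemma~\ref{lem:interpolation-PM}, and part (ii) uses precisely the paper's time-direction interpolation (with the same exponents $\lambda=\frac{\theta-\tilde\theta}{\tilde\theta(\theta-1)}$, $1-\lambda=\frac{\theta(\tilde\theta-1)}{\tilde\theta(\theta-1)}$) to upgrade $L^1(0,T-h;Y)$ translation control to $L^{\tilde\theta}(0,T-h;Y)$ before reapplying the argument of part (i) with $\tilde p$. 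The only cosmetic difference is some slight rambling at the start of your part (i), but the substance matches.
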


\begin{proof}[Proof]
Our aim is to apply Theorem~\ref{thm:JS-th1} with $B=L^q(\Omega;\R^N)$ and $F^m$ instead of $F$. From \eqref{JS_5m_1a} we know that $F^m$ is bounded in $L^p(t_1,t_2;X)$ and hence in $L^1(t_1,t_2;X)$ for any $0<t_1<t_2<T$. Due to the compact embedding $X\hookrightarrow L^{q}(\Omega;\R^N)$ this implies that $\{\int_{t_1}^{t_2} f^m(t) \,\d t: f^m\in F^m\}$ is relatively compact in $L^q(\Omega;\R^N)$ for any $0<t_1<t_2<T$. Hence \eqref{JS_1} is satisfied. Next, we verify assumption \eqref{JS_2}.
For $\eta>0$ we denote by $M_\eta>0$ the constant from Lemma~\ref{lem:interpolation-PM}. The application of the Lemma yields that 
\begin{align*}
	&\|\tau_h f^m - f^m\|_{L^p(0,T-h;L^q(\Omega;\R^N))} \\
	&\quad\le
	\| f^m\|_{L^p(0,T;X)}^{\frac{m}{m+1}}
	\bigg[
	\eta \Big[\|f^m\|_{L^{p}(0,T;X)}^{\frac{1}{m+1}} + 
	\|f\|_{L^{\theta}(0,T;L^{\mu'}(\Omega;\R^N))}^{\frac{m}{m+1}} \Big] 
	+ 
	M_{\eta} \|\tau_h f-f\|_{L^{\theta}(0,T-h;Y)}^{\frac{m}{m+1}} 
	\bigg]
\end{align*}
holds true for any $f\in F$. 
Assumptions \eqref{JS_5m_0a} and \eqref{JS_5m_1a} ensure the existence of a constant $C>0$ such that $\|f^m\|_{L^{p}(0,T;X)}\le C$ and $\|f\|_{L^{\theta}(0,T;L^{\mu'}(\Omega;\R^N))}\le C$ for any $f\in F$. 
Hence, for $\epsilon>0$ we may choose $\eta>0$ in the preceding inequality small enough such that 
\begin{align*}
	\|\tau_h f^m - f^m\|_{L^p(0,T-h;L^q(\Omega;\R^N))} 
	\le
	\epsilon 
	+ 
	C_{\epsilon} \|\tau_h f-f\|_{L^{\theta}(0,T-h;Y)}^{\frac{m}{m+1}}, 
\end{align*}
for a constant $C_\epsilon>0$ depending on $\epsilon$, but not on $h$. If the assumption of assertion\,(i) holds, then there is $h_\varepsilon$ such that if $0<h<h_\varepsilon$, then
\begin{align*}
\|\tau_h f-f\|_{L^{\theta}(0,T-h;Y)}^{\frac{m}{m+1}}< \varepsilon/C_\varepsilon,
\end{align*} 
for all $f\in F$. But this means that 
\begin{align*}
\|\tau_h f^m - f^m\|_{L^p(0,T-h;L^q(\Omega;\R^N))}  < 2\varepsilon
\end{align*}
for all $f\in F$ when $h\in (0,h_\varepsilon)$. Since $\epsilon>0$ was arbitrary this verifies assumption \eqref{JS_2} in Theorem~\ref{thm:JS-th1}. Therefore, the application of the Theorem yields the compactness of $F^m$ in $L^{p}(0,T;L^q(\Omega;\R^N))$ proving assertion\,(i).

The second assertion\,(ii) will follow by an interpolation argument. If $\theta=1$, then the result already follows by (i). Therefore it is enough to consider the case $1<\theta=m p$. We consider $\tilde p\in[1,p)$. Without loss of generality we may assume that $\tilde\theta:=m\tilde p>1$. We interpolate 
\begin{align*}
	\|\tau_h f-f\|_{L^{\tilde\theta}(0,T-h;Y)}
	&\le
	\|\tau_h f-f\|_{L^{\theta}(0,T-h;Y)}^{\frac{\theta(\tilde\theta-1)}{\tilde\theta(\theta-1)}}
	\|\tau_h f-f\|_{L^{1}(0,T-h;Y)}^{\frac{\theta-\tilde\theta}{\tilde\theta(\theta-1)}} \\
	&\le
	c\,\|f\|_{L^{\theta}(0,T;L^{\mu'}(\Omega;\R^N))}^{\frac{\theta(\tilde\theta-1)}{\tilde\theta(\theta-1)}}
	\|\tau_h f-f\|_{L^{1}(0,T-h;Y)}^{\frac{\theta-\tilde\theta}{\tilde\theta(\theta-1)}}.
\end{align*}
Due to Assumption \eqref{JS_5m_0a} $\|f\|_{L^{\theta}(0,T;L^{\mu'}(\Omega;\R^N))}$ is bounded independent of $f$ and therefore  we have that $\|\tau_h f-f\|_{L^{\tilde\theta}(0,T-h;Y)}\to0$ as $h\downarrow 0$ uniformly for $f\in F$. This allows us to apply Theorem~\ref{thm:JS-th1} with $\tilde p$ instead of $p$ and thus yields assertion (ii). 
\end{proof}

Applying Theorem~\ref{thm:JS-th5m} with $X=W^{1,p}_0(\Omega;\R^N)$ yields the following Corollary.

\begin{cor}\label{thm:JS-th6m}
Let $m\in(0,\infty)$, $p\in[1,\infty)$, and $T>0$, $\Omega\subset\R^n$ be a bounded domain and $Y$ be a Banach space such that $(W^{1,p}_0(\Omega;\R^N))'\subset Y$. 
Moreover, let $\theta:=\max\{1,m p\}$, $q\in[p,\frac{np}{n-p})$, $\mu\in[p,\frac{np}{n-p})$ if $p<n$ and $q,\mu\in[p,\infty)$ if $p\ge n$ and consider $F \subset L^{\theta}(0,T;L^{\mu'}(\Omega;\R^N))$ such that $F^m\subset L^p(0,T;W^{1,p}(\Omega;\R^N))$. We assume that 
\begin{equation}\label{JS_5m_0}
	\mbox{$F$ is bounded in $L^\theta(0,T;L^{\mu'}(\Omega;\R^N))$}
\end{equation}
and 
\begin{equation}\label{JS_5m_1}
	\mbox{$F^m$ is bounded in $L^p(0,T;W^{1,p}(\Omega;\R^N))$.}
\end{equation}
\begin{itemize}
\item[(i)]
If 
\begin{equation*}
	\|\tau_h f - f\|_{L^\theta(0,T-h;Y)}\to 0
	\quad\mbox{as $h\downarrow 0$,}
	\quad\mbox{uniformly for $f\in F$}
\end{equation*}
is satisfied, then $F^m$ is relatively compact in $L^{p}(0,T;L^q(\Omega;\R^N))$. 
\item[(ii)]
If 
\begin{equation*}
	\|\tau_h f - f\|_{L^1(0,T-h;Y)}\to 0
	\quad\mbox{as $h\downarrow 0$,}
	\quad\mbox{uniformly for $f\in F$}
\end{equation*}
is satisfied, then $F^m$ is relatively compact in $L^{\tilde p}(0,T;L^q(\Omega;\R^N))$ for any $\tilde p\in[1,p)$. 
\end{itemize}
\end{cor}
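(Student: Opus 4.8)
The plan is to specialize Theorem~\ref{thm:JS-th5m} to the concrete choice $X=W^{1,p}_0(\Omega;\R^N)$. Once the abstract framework of that theorem is matched with the concrete function spaces, both assertions of the Corollary follow verbatim from the corresponding assertions of Theorem~\ref{thm:JS-th5m}, because the hypotheses \eqref{JS_5m_0}, \eqref{JS_5m_1} and the two $\tau_h$-conditions are literally \eqref{JS_5m_0a}, \eqref{JS_5m_1a} and the hypotheses of items (i), (ii) there. So the only genuine work is to install the two compact embeddings and to check the compatibility relation \eqref{operator_condition}.

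First I would invoke the Rellich--Kondrachov theorem: for an arbitrary bounded $\Omega\subset\R^n$ (no boundary regularity is needed, since elements of $W^{1,p}_0$ extend by zero to a ball) the embedding $W^{1,p}_0(\Omega;\R^N)\hookrightarrow L^r(\Omega;\R^N)$ is compact for every $r<\tfrac{np}{n-p}$ if $p<n$, and for every $r<\infty$ if $p\ge n$. Since $q$ and $\mu$ lie in these ranges by assumption, this yields compact embeddings $T\colon W^{1,p}_0(\Omega;\R^N)\hookrightarrow L^q(\Omega;\R^N)$ and $\iota\colon W^{1,p}_0(\Omega;\R^N)\hookrightarrow L^\mu(\Omega;\R^N)$. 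Because the adjoint of a compact operator between Banach spaces is compact (Schauder's theorem), the dual map $S:=\iota^{\ast}$ is a compact embedding $L^{\mu'}(\Omega;\R^N)=\bigl(L^\mu(\Omega;\R^N)\bigr)'\hookrightarrow\bigl(W^{1,p}_0(\Omega;\R^N)\bigr)'=X'$; moreover $X'\subset Y$ holds by hypothesis, so the chain $L^{\mu'}(\Omega;\R^N)\subset X'\subset Y$ of continuous embeddings required in Theorem~\ref{thm:JS-th5m} is in place.

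Next I would verify \eqref{operator_condition}: by the very definition of the adjoint, for every $v\in W^{1,p}_0(\Omega;\R^N)$ and $w\in L^{\mu'}(\Omega;\R^N)$ one has $\langle v,Sw\rangle=\langle \iota v,w\rangle_{L^\mu,L^{\mu'}}=\int_\Omega \iota v\cdot w\d x$, and since $\iota v$ and $Tv$ are one and the same element of $L^1(\Omega;\R^N)$ — both being $v$ regarded in a larger Lebesgue space — the right-hand side equals $\int_\Omega Tv\cdot w\d x$, which is exactly \eqref{operator_condition}. Finally, $F^m\subset L^p(0,T;W^{1,p}_0(\Omega;\R^N))$ (which is how the assumption $F^m\subset L^p(0,T;W^{1,p}(\Omega;\R^N))$ is to be read, the two norms being equivalent on $W^{1,p}_0(\Omega)$ by Poincaré's inequality) together with the embedding $W^{1,p}_0(\Omega;\R^N)\hookrightarrow L^\mu(\Omega;\R^N)$ yields $F^m\subset L^p(0,T;X)\cap L^p(0,T;L^\mu(\Omega;\R^N))$, while \eqref{JS_5m_1} gives the boundedness \eqref{JS_5m_1a}. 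With all hypotheses of Theorem~\ref{thm:JS-th5m} verified, assertions (i) and (ii) of the Corollary are its assertions (i) and (ii).

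I do not anticipate a serious obstacle: the statement is a reformulation of Theorem~\ref{thm:JS-th5m} in concrete spaces. The only mildly delicate points are recalling that compactness passes to the adjoint operator $S$, and recognizing that, once $T$ and $S$ are chosen as the natural Sobolev embedding and its dual, the abstract compatibility \eqref{operator_condition} collapses to the tautology that the $L^\mu$--$L^{\mu'}$ duality pairing is given by integration.
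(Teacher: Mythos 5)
There is a genuine gap, and it lies in the sentence ``$F^m\subset L^p(0,T;W^{1,p}_0(\Omega;\R^N))$ (which is how the assumption $F^m\subset L^p(0,T;W^{1,p}(\Omega;\R^N))$ is to be read, the two norms being equivalent on $W^{1,p}_0(\Omega)$ by Poincar\'e's inequality).'' The hypothesis of the corollary really does place $F^m$ only in $L^p(0,T;W^{1,p}(\Omega;\R^N))$, \emph{without} the zero boundary trace: the functions in $F^m$ are not assumed to belong to $W^{1,p}_0(\Omega;\R^N)$, and Poincar\'e's inequality is beside the point (it compares two norms on $W^{1,p}_0$, it does not turn a $W^{1,p}$-function into a $W^{1,p}_0$-function). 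Consequently, $F^m\not\subset L^p(0,T;X)$ when you set $X=W^{1,p}_0(\Omega;\R^N)$, and Theorem~\ref{thm:JS-th5m} cannot be applied to $F$ directly. This is not a pedantic point: in the paper's own application of the corollary (Lemma~\ref{lem:strong}) the functions $v_{k_j}^\beta$ have boundary trace $k_j^{-\beta}\neq 0$, so they lie in $W^{1,p}(\Omega)$ but not in $W^{1,p}_0(\Omega)$.

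The paper resolves this by a localization: one fixes $\widetilde\Omega\Subset\Omega$ and a cutoff $\eta\in C_0^\infty(\Omega)$ with $\eta\equiv 1$ on $\widetilde\Omega$, applies Theorem~\ref{thm:JS-th5m} with $X=W^{1,p}_0(\Omega;\R^N)$ to the modified family $F_\eta:=\{\eta^{1/m}f:f\in F\}$ (for which indeed $F_\eta^m\subset L^p(0,T;W^{1,p}_0(\Omega;\R^N))$), obtains relative compactness of $F^m$ in $L^p(0,T;L^q(\widetilde\Omega;\R^N))$, and concludes by a diagonal argument as $\widetilde\Omega$ exhausts $\Omega$. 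This is precisely why the proof picks $X=W^{1,p}_0$ rather than $W^{1,p}$: as the remark immediately after the corollary explains, $W^{1,p}(\Omega)\hookrightarrow L^q(\Omega)$ is compact for a general bounded open $\Omega$ only under additional boundary regularity such as a cone condition, and the corollary deliberately avoids any such hypothesis. Your identification of the embeddings $T$, $S$, the use of Schauder's theorem for the compactness of the adjoint, and the verification of \eqref{operator_condition} are all correct and match the paper; it is only the missing cutoff/diagonal step that needs to be supplied.
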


\begin{proof}[Proof]
We restrict ourselves to the proof of case (i), since (ii) is completely analogous.  Let $\widetilde\Omega\Subset\Omega$ and $\eta\in C_0^\infty(\Omega)$ be a nonnegative function with $\eta\equiv 1$ in $\widetilde\Omega$. We now consider the family of functions $F_\eta:=\{\eta^{\frac{1}{m}}f: f\in F\}$. Then, $F_\eta\subset L^{\theta}(0,T;L^{\mu'}(\Omega;\R^N))$ and $F_\eta^m \subset L^p(0,T;W^{1,p}_0(\Omega;\R^N))$. Our goal is to apply Theorem~\ref{thm:JS-th5m} with $X=W^{1,p}_0(\Omega;\R^N)$ and $F_\eta$ in place of $F$. Note that due to the parameter ranges, all elements of $X$ are $L^q$-integrable, and the inclusion $T:X \to L^q(\Omega;\R^N)$ is compact. Similarly, we have a compact inclusion $\tilde{T}:X \to L^\mu(\Omega;\R^N)$. Furthermore, we have an embedding $L^{\mu'}(\Omega;\R^N)\hookrightarrow X'$ given by $S:=\tilde{T}'\circ J$, where $\tilde{T}': (L^\mu(\Omega;\R^N))'\to X'$ is the adjoint of $\tilde{T}$, and $J$ is the standard isomorphism from $L^{\mu'}(\Omega;\R^N)$ to $(L^\mu(\Omega;\R^N))'$. Since $\tilde{T}$ is compact, Schauder's theorem guarantees that also $S$ is compact. The condition \eqref{operator_condition} follows directly from the definitions of $S$, $T$ and $\tilde{T}$.
Notice that assumptions \eqref{JS_5m_0}, \eqref{JS_5m_1} and (i) imply the corresponding assumptions in Theorem~\ref{thm:JS-th5m}. Thus all the assumptions of Theorem~\ref{thm:JS-th5m} are satisfied, and hence $F_\eta^m$ is relatively compact in $L^{p}(0,T;L^q(\Omega;\R^N))$. In particular, this implies that $F^m$ is relatively compact in $L^{p}(0,T;L^q(\widetilde\Omega;\R^N))$. Since $\widetilde\Omega\Subset\Omega$ was arbitrary we may conclude the relative compactness of $F^m$ in $L^{p}(0,T;L^q(\Omega;\R^N))$ by a diagonal argument.
\end{proof}

%==========================================

In the previous proof we choose $X$ to consist of compactly supported Sobolev functions since the corresponding space of arbitrary Sobolev functions typically is not compactly embedded into $L^q$. Such a result holds only if $\Omega$ is sufficiently regular, for example if $\Omega$ satisfies a cone condition, see \cite[Theorem 6.3]{Ad}. In that case, the proof is somewhat simpler.

\begin{rem}
The assumption on the uniform convergence of the time differences in Theorem~\ref{thm:JS-th6m}\,(i) and (ii) are satisfied for functions with integrable time derivative. More precisely, \cite[\S\,5,~Lemma~4]{JacquesSimon} ensures that 
\begin{equation*}
	\mbox{$\{\partial_t f: f\in F\}$ is bounded in $L^\theta(0,T;Y)$}
\end{equation*}
implies assumption (i) of Theorem~\ref{thm:JS-th6m} and 
\begin{equation*}
	\mbox{$\{\partial_t f: f\in F\}$ is bounded in $L^1(0,T;Y)$}
\end{equation*}
implies assumption (ii) of Theorem~\ref{thm:JS-th6m}.
\end{rem}

\section{The approximation scheme}
For $k>1$ we approximate the vector field $A$ defined in \eqref{def:A} by vector fields $A_k$ given by
\begin{align}\label{approxfield}
A_k(v,\xi):=
A\big(T_k(v),\beta T_k(v)^{\beta-1}\xi\big)
=
\begin{cases}
A(\frac{1}{k},\beta\frac{1}{k^{\beta-1}}\xi), \hspace{7mm} v<\frac{1}{k}
\\
A(v,\beta v^{\beta-1}\xi), \hspace{7mm} \frac{1}{k}\leq v \leq k
\\
A(k, \beta k^{\beta-1}\xi), \hspace{7mm} v > k,
\end{cases}
\end{align}
for $v\in \R$ and $\xi \in \R^n$, where the truncation $T_k:\R\to \R$ is defined by
\begin{align}\label{truncation}
T_k(s):= \min\big\{ k, \max\{ s, \tfrac1k\}\big\}.
\end{align}
Exploiting the definiton of $A$, we can express $A_k$ as
\begin{align}\label{A_k-alt-expression}
A_k(v,\xi)=
T_k^\alpha(v) |\xi+\nabla z|^{p-2}(\xi+\nabla z) =
\begin{cases}
k^{-\alpha}|\xi+\nabla z|^{p-2}(\xi+\nabla z), \hspace{4mm} & v<\frac{1}{k}
\\
v^\alpha |\xi+\nabla z |^{p-2}(\xi + \nabla z), & \frac{1}{k}\leq v \leq k
\\
k^\alpha|\xi+\nabla z|^{p-2}(\xi+\nabla z), & v > k.
\end{cases}
\end{align}

\subsection{Properties of $A_k$}
Using the properties of the vectorfield corresponding to the $p$-Laplace operator we can verify the following useful basic properties of $A_k$.

\subsubsection{Monotonicity}
Due to Remark~\ref{rem:monotone} we know that there exists a constant $c$ depending only on $p$ such that 
\begin{align}\label{Ak-monotone}
(A_k(v,\xi)-A_k(v,\eta))\cdot (\xi-\eta) 
&\geq \begin{cases}
c\, k^{-\alpha}|\xi-\eta|^p, \hspace{4mm}&p\geq 2
\\[3pt]
c\, k^{-\alpha}\big(|\xi+\nabla z|^2+|\eta+\nabla z|^2\big)^{\frac{p-2}{2}}|\xi-\eta|^2, & p<2,
\end{cases}
\end{align}
holds true for any $v\in\R$ and $\xi,\eta\in\R^n$. If $p<2$ and $\xi\equiv\eta\equiv -\nabla z$, then the right-hand side has to be interpreted as zero. 
%where
%\begin{align*}
%C_p=
%\begin{cases}
%\frac{4^{2-p}}{p-1}, \hspace{4mm} &p\geq 2
%\\
%p-1, &p<2.
%\end{cases}
%\end{align*}
%The value of the constant is a consequence of \cite[Lemma 3.3]{KoKuLi}. 

\subsubsection{Boundedness}
For any $v\in\R$ and $\xi\in\R^n$ we have
\begin{align}\label{Ak-bd}
|A_k(v,\xi)|\leq k^\alpha |\xi+\nabla z|^{p-1}\leq 2^{p-1}k^\alpha \big(|\xi|^{p-1}+|\nabla z|^{p-1}\big).
\end{align}

\subsubsection{Coercivity}
Due to the definition of $A_k$ and Lemma~\ref{lem:V} we have that
\begin{align}\label{coercivity}
	A_k(v,\xi)\cdot \xi 
	&=
	T_k^\alpha(v) |\xi+\nabla z|^{p-2}(\xi+\nabla z)\cdot \xi \nonumber\\
	&\ge
	T_k^\alpha(v) \big[2^{-p}|\xi|^p - 2^p|\nabla z|^p\big] \nonumber\\
	&\ge
	2^{-p}k^{-\alpha} |\xi|^p - 2^pk^\alpha |\nabla z|^p
\end{align}
for any $v\in\R$ and $\xi\in\R^n$.

\subsection{Weak solutions of the approximating equation}
In this section we want to find weak solutions to the approximating problems 
\begin{align}\label{approxprob}
\left\{
\begin{array}{ll}
\partial_t v_k - \nabla \cdot A_k(v_k, \nabla v_k) = f-k^{-\alpha}\nabla \cdot \big(|\nabla z|^{p-2}\nabla z\big) &\quad \text{ in } \Omega_T,\\[5pt]
v_k = \frac1k & \quad \text{ on } \partial\Omega\times(0,T),\\[5pt]
v_k(\cdot,0) = \frac1k+\Psi & \quad \text{ in } \overline\Omega,
\end{array}
\right.
\end{align}
where $\Psi=\psi-z$. By formally integrating by parts we are led to the following definition.
\begin{defin}\label{approxdef}
A function $v_k\in C([0,T]; L^2(\Omega))\cap \frac{1}{k}+ L^p(0,T; W^{1,p}_0(\Omega))$ is an admissible weak solution to the Cauchy-Dirichlet problem \eqref{approxprob} if 
\begin{align}\label{lampredotto}
\iint_{\Omega_T} \big[A_k(v_k,\nabla v_k)\cdot \nabla \varphi - v_k\partial_t\varphi\big]\d x\d t = 
\iint_{\Omega_T} \big[f\varphi + k^{-\alpha}|\nabla z|^{p-2}\nabla z\cdot \nabla \varphi\big] \d x \d t
\end{align}
for all $\varphi\in C^\infty_0(\Omega_T)$ and $v_k(\cdot,0)=\tfrac1k+\Psi$ in $\Omega$. 
\end{defin}

Using a test function of the form $[\varphi(x,t)\xi(t)]_{\bar{h}}$ where $\varphi \in C^\infty(\bar{\Omega}\times [0,T])$ vanishes if $x$ is outside a compact subset of $\Omega$ and $\xi$ is any smooth function compactly supported in $(0,T-h)$, one easily verifies that any solution $v_k$ in the sense of Definition \ref{approxdef} satisfies the following equation with Steklov-means $[\,\cdot\,]_h$ defined in \eqref{def:steklov}:
\begin{align}\label{trippa}
&\int^b_a\int_\Omega \Big[\partial_t [v_k]_{h} \varphi + \big[A_k(v_k,\nabla v_k)\big]_{h} \cdot \nabla \varphi\Big] \d x \d t\nonumber\\ 
&\qquad\qquad\qquad\qquad= \int^b_a\int_\Omega \big[[f]_{h} \varphi + k^{-\alpha}|\nabla z|^{p-2}\nabla z\cdot\nabla\varphi\big]\d x \d t,
\end{align}
for all $0\leq a < b \leq T-h$. In fact, by approximation with smooth functions, one sees that  all $\varphi\in L^p(0,T;W^{1,p}_0(\Omega))\cap L^\infty(\Omega_T)$ are admissible in \eqref{trippa}. An analogous identity holds true for the Steklov averages $[\,\cdot\,]_{\bar h}$ and $h \leq a < b \leq T$.

We now prove the existence of a solution to the regularized problem in the sense of Definition \ref{approxdef}. We will follow the functional analytic approach of Showalter \cite{Sho} making use of Galerkin's method. In fact, one can reason as in the proof of \cite[Theorem 4.1, Section III.4]{Sho}, despite the somewhat weaker coercivity condition in our case. For the reader's convenience we present the full argument below. We have opted to avoid the theory of operators of type M used by Showalter, exploiting instead the stronger monotonicity property of the vector field $A_k$.

\begin{lem}\label{lem:approx-sol}
For any $k>1$ there exists at least one admissible weak solution $v_k$ to~\eqref{approxprob} in the sense of Definition~\ref{approxdef}.
\end{lem}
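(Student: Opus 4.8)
The plan is to use the Galerkin method with a basis adapted to the elliptic part, following the scheme of Showalter but exploiting the explicit monotonicity \eqref{Ak-monotone} of $A_k$ to avoid the abstract theory of operators of type M. The argument proceeds in three stages: constructing finite-dimensional approximations, passing to the limit via a priori estimates and monotonicity, and verifying the initial condition together with the time continuity.

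\textbf{Finite-dimensional approximations.} Fix $k>1$. By reflexivity and separability of $W^{1,p}_0(\Omega)$, choose a countable linearly independent set $\{w_1, w_2, \dots\}\subset W^{1,p}_0(\Omega)\cap L^\infty(\Omega)$ whose linear span is dense. For each $m\in\N$ set $V_m:=\operatorname{span}\{w_1,\dots,w_m\}$, and seek
\begin{align*}
	v_k^{(m)}(x,t) = \tfrac1k + \Psi_m(x) + \sum_{i=1}^m c_i(t)\,w_i(x),
\end{align*}
where $\Psi_m\to\Psi$ suitably in $L^2(\Omega)$ (and with $\tfrac1k+\Psi_m\ge 0$), solving the system
\begin{align*}
	\int_\Omega \partial_t v_k^{(m)}\, w_j + A_k\big(v_k^{(m)},\nabla v_k^{(m)}\big)\cdot\nabla w_j\,\d x
	= \int_\Omega f\,w_j + k^{-\alpha}|\nabla z|^{p-2}\nabla z\cdot\nabla w_j\,\d x
\end{align*}
for $j=1,\dots,m$, with $c_i(0)$ determined by the $L^2$-projection of the initial datum. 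Writing this as $G\,c'(t) = g(c(t)) + \tilde f(t)$ with $G=\big(\int_\Omega w_i w_j\big)_{ij}$ invertible, $g$ continuous by \eqref{Ak-bd}, and $\tilde f$ integrable by \eqref{assumption:data}, Lemma~\ref{lem:Cauchy-Peano} furnishes a local absolutely continuous solution; the a priori bound below extends it to $[0,T]$.

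\textbf{A priori estimates and limit passage.} Multiplying the ODE system by $c_j(t)$, summing, and integrating in time, the coercivity \eqref{coercivity} together with Young's inequality and $f\in L^{\sigma p'}(\Omega_T)\subset L^{p'}(\Omega_T)$, $|\nabla z|^p\in L^1(\Omega_T)$ yields, uniformly in $m$,
\begin{align*}
	\sup_{t\in[0,T]}\big\|v_k^{(m)}(t)\big\|_{L^2(\Omega)}^2
	+ \big\|v_k^{(m)}\big\|_{L^p(0,T;W^{1,p}(\Omega))}^p \le C(k).
\end{align*}
Hence, along a subsequence, $v_k^{(m)}\wto v_k$ in $L^p(0,T;W^{1,p}(\Omega))$ and weak-$*$ in $L^\infty(0,T;L^2(\Omega))$, while \eqref{Ak-bd} gives $A_k(v_k^{(m)},\nabla v_k^{(m)})\wto \chi$ in $L^{p'}(\Omega_T;\R^n)$. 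To identify $v_k$ as a solution one needs two things: strong convergence of $v_k^{(m)}$ (to pass to the limit in the nonlinearity) and the identification $\chi = A_k(v_k,\nabla v_k)$. For the first, note the ODE system shows $\partial_t v_k^{(m)}$ is bounded in $L^{p'}(0,T;V_m')$; combined with the spatial bound and a standard Aubin--Lions/Simon argument one obtains $v_k^{(m)}\to v_k$ strongly in $L^p(\Omega_T)$ and a.e., and in particular $T_k^\alpha(v_k^{(m)})\to T_k^\alpha(v_k)$ strongly in every $L^r(\Omega_T)$ by dominated convergence (the truncation is bounded and continuous). For the second, the Minty--Browder trick applies: test the limit equation and the Galerkin equations against $v_k$ and $v_k^{(m)}$ respectively, use the energy identity to control $\limsup \iint A_k(v_k^{(m)},\nabla v_k^{(m)})\cdot\nabla v_k^{(m)}$, and conclude via the monotonicity \eqref{Ak-monotone} of $\xi\mapsto A_k(v,\xi)$ (for fixed $v$) together with the strong convergence of $T_k^\alpha(v_k^{(m)})$ that $\chi = A_k(v_k,\nabla v_k)$ and $\nabla v_k^{(m)}\to\nabla v_k$ strongly.

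\textbf{Boundary and initial data, time continuity.} Since each $v_k^{(m)} - \tfrac1k - \Psi_m \in L^p(0,T;V_m)\subset L^p(0,T;W^{1,p}_0(\Omega))$ and this space is weakly closed, the limit satisfies $v_k - \tfrac1k\in L^p(0,T;W^{1,p}_0(\Omega))$, giving the lateral boundary condition. Standard manipulation of the Galerkin equations against test functions $\varphi\in C^\infty_0(\Omega_T)$ and passage to the limit yields \eqref{lampredotto}. The time continuity $v_k\in C([0,T];L^2(\Omega))$ follows because $\partial_t v_k\in L^{p'}(0,T;(W^{1,p}_0(\Omega))')$ (read off from \eqref{lampredotto} and the bounds on $A_k(v_k,\nabla v_k)$ and $f$) and $v_k\in L^p(0,T;W^{1,p}(\Omega))$, so a Lions--Magenes type embedding applies; the attainment $v_k(0)=\tfrac1k+\Psi$ in $L^2(\Omega)$ is verified by integrating \eqref{lampredotto} against a test function $\varphi(x,t)=\zeta(t)w(x)$ with $\zeta(T)=0$, $\zeta(0)=1$, comparing with the analogous Galerkin identity in the limit, and using the $L^2$-convergence of the projected initial data $\tfrac1k+\Psi_m\to\tfrac1k+\Psi$.

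\textbf{Main obstacle.} The delicate point is the identification $\chi = A_k(v_k,\nabla v_k)$, i.e.\ handling the joint nonlinearity in both $v$ and $\nabla v$. The monotonicity \eqref{Ak-monotone} only controls the $\xi$-variable for \emph{fixed} $v$, so one must first upgrade the convergence of $v_k^{(m)}$ from weak to strong (hence a.e.) to freeze the coefficient $T_k^\alpha(v_k^{(m)})$; this is where the Aubin--Lions compactness — and therefore the bound on $\partial_t v_k^{(m)}$ extracted from the ODE system — is essential. A secondary technical nuisance, already flagged by the authors, is that the coercivity \eqref{coercivity} is weaker than in Showalter's setting (there is a $k^{-\alpha}$ and a subtracted $|\nabla z|^p$ term), but since $k$ is fixed and $|\nabla z|^p\in L^1(\Omega_T)$ this only affects constants, not the structure of the argument.
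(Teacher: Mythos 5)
Your Galerkin scheme, the use of Lemma~\ref{lem:Cauchy-Peano}, the a priori estimates from \eqref{coercivity} and \eqref{Ak-bd}, and the overall plan all match the paper. The interesting divergence is in how you identify the weak limit of $A_k(v_k^{(m)},\nabla v_k^{(m)})$: you first upgrade the convergence of $v_k^{(m)}$ to strong $L^p$- and a.e.-convergence via an Aubin--Lions/Simon argument, use this to freeze the coefficient $T_k^\alpha(v_k^{(m)})\to T_k^\alpha(v_k)$, and only then apply the Minty trick together with the $\xi$-monotonicity \eqref{Ak-monotone} at \emph{fixed} first argument. The paper instead works directly with the operator $\A$ (after shifting to $w=v_k-\tfrac1k$), invokes the energy identity and the weak convergences to bound $\limsup\langle\A w_m-\A w,w_m-w\rangle\le0$, and cites \eqref{Ak-monotone} to lower-bound the left side by $\|\nabla w_m-\nabla w\|_{L^p}^p$ — bypassing Aubin--Lions entirely. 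You correctly single out the genuine difficulty, namely that \eqref{Ak-monotone} is monotonicity in $\xi$ for \emph{fixed} $v$, while $\A w_m-\A w$ has a $v$-dependent coefficient $T_k^\alpha$; handling that dependence requires some strong convergence of the values, which your Aubin--Lions step supplies explicitly (the paper's shortcut does not obviously address this, and your more careful route is the safer one).

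One point in your write-up deserves tightening. You bound $\partial_t v_k^{(m)}$ in $L^{p'}(0,T;V_m')$, but $V_m'$ varies with $m$, so this is not immediately a uniform bound in a fixed space $Y$, which is what Aubin--Lions/Simon requires. The Galerkin equation only controls the projection of $\partial_t v_k^{(m)}$ onto $V_m$, and unless the $L^2$-projections onto $V_m$ are uniformly bounded on $(W^{1,p}_0(\Omega))'$ (true for a spectral basis, not for an arbitrary one), you cannot directly read off $\|\partial_t v_k^{(m)}\|_{L^{p'}(0,T;(W^{1,p}_0)')}\le C$. Standard fixes are to choose the $w_i$ as eigenfunctions of a sufficiently smooth elliptic operator so the projections commute with the relevant embeddings, or to establish time-translation estimates $\|\tau_h v_k^{(m)}-v_k^{(m)}\|$ directly from the Galerkin equation (as the paper later does for $v_k$ itself in Lemma~\ref{lem:strong}) and feed these into Simon's criterion, Theorem~\ref{thm:JS-th1}. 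Either way the gap is fixable, but as written the step ``combined with the spatial bound and a standard Aubin--Lions/Simon argument'' skips over this basis-dependence, and you should make the choice of basis (or the translation estimate) explicit.
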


\begin{proof}[Proof]
We fix $k>1$ and consider the modified vector field 
\begin{align*}
\tilde{A}_k(w,\xi):=A_k\big(\tfrac1k + w, \xi\big)=
T_k^\alpha(\tfrac1k + w )|\xi + \nabla z|^{p-2}(\xi + \nabla z),
\end{align*} 
for $w\in\R$ and $\xi\in\R^n$. We prove  the existence of a function $w\in C([0,T];L^2(\Omega))\cap L^p(0,T;W^{1,p}_0(\Omega))$ satisfying
\begin{align}\label{tildeAk_equation}
\iint_{\Omega_T}\big[\tilde{A}_k(w,\nabla w)\cdot \nabla \varphi - w\partial_t\varphi\big]\d x\d t = \iint_{\Omega_T} \big[f\varphi + k^{-\alpha}|\nabla z|^{p-2}\nabla z\cdot \nabla \varphi\big] \d x \d t,
\end{align}
for all $\varphi\in C^\infty_0(\Omega_T)$, and $w(\cdot,0)= \Psi$ in $\Omega$. Then $v_k:= \tfrac1k + w$ is an admissible weak solution in the sense of Definition \ref{approxdef}.  

We denote $V=L^2(\Omega)\cap W^{1,p}_0(\Omega)$ and define $\A : V\to V'$ by
\begin{align*}
\langle \A(w), v\rangle := \int_\Omega \tilde{A}_k(w,\nabla w)\cdot \nabla v \d x,
%= \int_\Omega \big[g_k(\tfrac1k + w )|\nabla w + \nabla z|^{p-2}(\nabla w + \nabla z)\cdot \nabla v \big]\d x, 
\hspace{2mm}v,w\in V.
\end{align*}
%where
%\begin{align*}
%g_k(s)= \big(\min\{ k, \max\{ s, \tfrac1k\}\}\big)^\alpha.
%\end{align*}
We define $F\in L^{p'}(0,T;V')$ by setting
\begin{align*}
\langle F(t), v\rangle := \int_\Omega \big[f(\cdot,t) v + k^{-\alpha}|\nabla z|^{p-2}\nabla z\cdot \nabla v\big] \d x, \hspace{3mm} v\in V.
\end{align*}
Recall that we have the inclusions $V \hookrightarrow L^2(\Omega)\hookrightarrow V'$ with $V$ being dense in $L^2(\Omega)$. Then, \eqref{tildeAk_equation} is equivalent to
\begin{align*}
w'+\A(w)=F, \hspace{10mm}\textrm{in } V'.
\end{align*}
This equation can be understood in the weak sense using the Bochner integral, or equivalently pointwise a.e. Pick a basis $(v_j)^{\infty}_{j=1}$ of $V$, and for each $m\in\N$ vectors $\psi_m \in \textrm{span} (v_1,\dots,v_m) =:V_m $ converging to $\Psi$ in $L^2(\Omega)$, and consider for a fixed $m\in \N$ the problem of finding a map $w_m:[0,T]\to  V_m$ satisfying
\begin{align}\label{galerkin}
\left\{
\begin{array}{l}
(w_m'(t),v_j)+\langle\A(w_m(t)),v_j\rangle = \langle F(t),v_j\rangle, \hspace{5mm} \mbox{for $j\in\{1,\dots m\}$ and a.e. $t$.}
\\[5pt]
w_m(0)=\psi_m 
\end{array}
\right.
\end{align}
Here $(\cdot,\cdot)$ denotes the inner product in $L^2(\Omega)$ and $\langle\cdot,\cdot\rangle$ denotes the dual pairing of $V'$ and $V$. We define $g:\R^m\to \R^m$ with components
\begin{align*}
g^i(y)=-\langle\A(\Sigma^m_{j=1}y^j v_j), v_i\rangle,
\quad\mbox{for $y\in\R^m$}
\end{align*}
and $\tilde f:[0,T]\to \R^m$ with components
\begin{align*}
\tilde f^i(t)=\langle F(t),v_i\rangle.
\end{align*}
The dominated convergence theorem shows that $g$ is continuous, $\tilde f$ is evidently integrable and the matrix with components $(v_i,v_j)$, $i,j\in \{1,\dots,m\}$ is invertible. Therefore Lemma~\ref{lem:Cauchy-Peano} guarantees that the problem \eqref{galerkin} has a solution $w_m$ on some interval $[0,\delta]$. 
This solution can be extended to a maximal interval $J\subset [0,T]$. Multiplying \eqref{galerkin} by the component function $w_m^j(t)$ of $w_m$ in the basis $(v_j)^m_{j=1}$ and summing over $j$ we have
\begin{align}\label{summedover}
(w_m'(t),w_m(t))+\langle\A(w_m(t)),w_m(t)\rangle = \langle F(t),w_m(t)\rangle \textrm{ for a.e. } t\in J.
\end{align}
The coercivity property \eqref{coercivity} of $A_k$ implies the same property for $\tilde{A}_k$, from which we obtain for all $v\in V$ that
\begin{align*}
\langle \A(v),v\rangle &\geq 2^{-p}k^{-\alpha}\int_\Omega |\nabla v|^p\d x - 
2^p k^{\alpha}\int_\Omega |\nabla z|^p\d x
\\
&\geq \tfrac{1}{c} \norm{v}^p_{W^{1,p}(\Omega)} - c\|\nabla z\|^p_{L^p(\Omega)},
\end{align*}
with a constant $c> 1$ depending on $\alpha, p, k$ and $\Omega$. Using this estimate in \eqref{summedover} shows that
\begin{align*}
(w_m'(t),w_m(t)) + \tfrac{1}{c}\norm{w_m(t)}^p_{W^{1,p}(\Omega)} \leq c\|\nabla z\|^p_{L^p(\Omega)} + \norm{F(t)}_{W^{-1,p'}(\Omega)}\norm{w_m(t)}_{W^{1,p}(\Omega)},
\end{align*}
for a.e. $t\in J$. Here we have extended $F(t)$ to an element of $W^{-1,p'}(\Omega)=(W^{1,p}_0(\Omega))'$ using the same formula as before. 
Applying Young's inequality to the last term we find that
\begin{align*}
(w_m'(t),w_m(t)) + \tfrac{1}{c}\norm{w_m(t)}^p_{W^{1,p}(\Omega)} \leq c\|\nabla z\|^p_{L^p(\Omega)} + c\norm{F(t)}_{W^{-1,p'}(\Omega)}^{p'},
\end{align*}
where $c>1$ depends on $\alpha, p, k$ and $\Omega$. 
Integrating the last intequality we have
\begin{align}\label{bokachuta}
\tfrac{1}{2}\norm{w_m(t)}_{L^2(\Omega)}^2 & + \tfrac1c\int^t_0 \norm{w_m(s)}^p_{W^{1,p}(\Omega)} \d s \nonumber\\
&\leq c\|\nabla z\|^p_{L^p(\Omega)} T + \tfrac{1}{2}\norm{\psi_m}_{L^2(\Omega)}^2 + c\int^T_0 \norm{F(s)}_{W^{-1,p'}(\Omega)}^{p'} \d s, 
\end{align}
for all $t\in J$. This shows that $\norm{w_m}_{L^2(\Omega)}$ and the component functions $w^i_m$ stay bounded on $J$, and from the system of equations we conclude that $w_m$ is absolutely continous on all of $J$. If $J$ is not the interval $[0,T]$ then $J=[0,b)$ where $b<T$. Then the uniform continuity of $w_m$ and the finite dimension of $V_m$ show that there is a limit of $w_m(t)$ as $t\uparrow b$ which allows us to extend $w_m$, thus contradicting maximality. Hence, $w_m$ must indeed be defined on all of $[0,T]$. Moreover, since $\psi_m\to \Psi$ in $L^2(\Omega)$, the estimate \eqref{bokachuta} shows on the one hand that $(w_m)$ is a bounded sequence in $L^\infty(0,T;L^2(\Omega))$ and on the other hand that $(w_m)$ is a bounded sequence in $L^p(0,T;W^{1,p}(\Omega))$. Hence, we infer that $w_m$ is a bounded sequence in $L^p(0,T;V)$. From the definition of $\A$ and \eqref{Ak-bd} we see that 
\begin{align*}
\norm{\A(v)}_{V'}\leq c\norm{v}^{p-1}_{V}+c\|\nabla z\|^{p-1}_{L^p(\Omega)},
\end{align*}
with $c=c(\alpha,p,k)$, and hence $(\A (w_m))$ is a bounded sequence in $L^{p'}(0,T;V')$. By reflexivity we have a subsequence still labelled as $(w_m)$ which converges weakly to $w\in L^p(0,T;V)$ and for which $(\A (w_m))$ converges weakly to $\xi \in L^{p'}(0,T;V')$. Furthermore, \eqref{bokachuta} shows that $(w_m(T))$ is bounded in $L^2(\Omega)$ so we may assume that $(w_m(T))$ converges weakly to some $w^*\in L^2(\Omega)$. Take now $\varphi \in C^\infty([0,T])$ and $v\in V_m$. From \eqref{galerkin} we see that
\begin{align*}
(w_m'(t),\varphi(t)v)+\langle\A(w_m(t)),\varphi(t)v\rangle &= \langle F(t),\varphi(t)v\rangle.
\end{align*}
Integrating this identity we obtain
\begin{align*}
-\int^T_0 &(w_m(t),v)\varphi'(t)\d t   + \int^T_0 \langle\A(w_m(t)),\varphi(t)v\rangle \d t 
\\
&=  (\psi_m,v)\varphi(0) - (w_m(T),v)\varphi(T) +\int^T_0 \langle F(t),\varphi(t)v\rangle \d t.
\end{align*}
Due to the weak convergences mentioned above we obtain by taking $m\to \infty$ that
\begin{align}\label{kattila}
-\int^T_0 &(w(t),v)\varphi'(t)\d t   + \int^T_0 \langle\xi(t),v\rangle\varphi(t) \d t 
\\
\notag &=  (\Psi,v)\varphi(0)  -   (w^*,v)\varphi(T)+ \int^T_0 \langle F(t),\varphi(t)v\rangle \d t,
\end{align}
for all $v\in V_{m_o}$ for any $m_o\in\N$, and by approximation for all $v\in V$. This shows (by taking $\varphi \in C^\infty_0(0,T)$) that 
\begin{align}\label{eq_for_w}
w' + \xi= F,
\end{align}
in $L^{p'}(0,T;V')$, and thus $w\in C([0,T];L^2(\Omega))$, see also Proposition 1.2 of Section III.1 in \cite{Sho}. Next we show that $w$ satisfies the right initial condition. Using the test function
\begin{align*}
\varphi(t)=\begin{cases}
\tfrac1\varepsilon(\varepsilon-t), &t\in [0,\varepsilon],
\\
0, &t>\varepsilon,
\end{cases}
\end{align*}
in \eqref{kattila} we have for all $v\in V$,
\begin{align*}
\bigg(\frac{1}{\varepsilon}\int^\varepsilon_0 w(t)\d t-\Psi, v \bigg)=\int^\varepsilon_0 \langle F(t) - \xi(t),v\rangle \varphi(t)\d t,
\end{align*}
where the integral on the left-hand side is taken in the Bochner sense of $w$ as an $L^2(\Omega)$-valued map. By the density of $V$ in $L^2(\Omega)$, this implies
\begin{align*}
\bigg\|\frac{1}{\varepsilon}\int^\varepsilon_0 w(t)\d t-\Psi\bigg\|_{L^2(\Omega)}\leq \int^\varepsilon_0 \norm{F(t) - \xi(t)}_{V'} \d t.
\end{align*} 
The right-hand side converges to zero as $\varepsilon\downarrow 0$. On the other hand, since $w\in C([0,T];L^2(\Omega))$, we know that the limit of the integral average appearing on the left-hand side is $w(0)$. Thus we have confirmed that $w(0)=\Psi$. It only remains to show that $\xi = \A(w)$. Since $\xi$ is the weak limit of $(\A(w_m))$, it is sufficient to show that $(\A(w_m))$ converges weakly to $\A(w)$. In order to prove the weak convergence, we first show the  $L^p$-convergence of $(\nabla w_m)$ to $\nabla w$. From the monotonicity condition \eqref{Ak-monotone} satisfied by $A_k$ it follows that
\begin{align*}
\langle \A w_m - \A w, w_m-w\rangle \geq \begin{cases}
c \displaystyle{\iint_{\Omega_T} |\nabla w_m - \nabla w|^p \d x \d t}, &p\geq 2
\\[15pt]
c \displaystyle{\iint_{\Omega_T\cap \{\nabla w \neq \nabla w_m\}}W_m^{p-2}|\nabla w_m-\nabla w|^2\d x \d t}, &p<2,
\end{cases}
\end{align*}
where $c=c(p,k)$ and
\begin{align*}
W_m:=|\nabla w_m + \nabla z|  + |\nabla w+\nabla z|.
\end{align*}
In the case $p<2$, by H\"older's inequality we may estimate 
\begin{align*}
\iint_{\Omega_T} & |\nabla w_m - \nabla w|^p \d x \d t \\
&= \iint_{\Omega_T\cap \{\nabla w \neq \nabla w_m\}}|\nabla w_m - \nabla w|^p W_m^{\frac{p(p-2)}{2}}W_m^{\frac{p(2-p)}{2}}\d x \d t
\\
&\leq \bigg[\iint_{\Omega_T\cap \{\nabla w \neq \nabla w_m\}}W_m^{p-2}|\nabla w_m-\nabla w|^2\d x\d t\bigg]^\frac{p}{2}\bigg[\iint_{\Omega_T}W_m^p\d x\d t\bigg]^\frac{2-p}{2}.
\end{align*}
The last factor is bounded independently of $m$ since $(w_m)$ is bounded in $L^p(0,T;V)$. Thus, setting $\nu=\max\{1,\tfrac2p\}$ we have in any case that
\begin{align*}
&\bigg[\iint_{\Omega_T} |\nabla w_m - \nabla w|^p \d x \d t\bigg]^\nu \\ 
&\quad\leq  c\int_0^T\big\langle \A (w_m) - \A (w), w_m-w\big\rangle\d t
\\ 
&\quad= c\int_0^T\big[\langle \A (w_m), w_m\rangle - \langle \A (w_m), w\rangle -\langle \A (w), w_m - w\rangle\big]\d t
\\
&\quad= c\int_0^T \big[\langle F, w_m\rangle - \langle \A (w_m), w\rangle -\langle \A (w), w_m - w\rangle \big] \d t +
\tfrac12\norm{\psi_m}^2_{L^2(\Omega)} - \tfrac12\norm{w_m(T)}^2_{L^2(\Omega)},
\end{align*}
for a constant $c$ independent of $m$. 
In the last step we have used \eqref{summedover} integrated over $[0,T]$. The weak convergences of $(w_m)$ to $w$ and $(\A (w_m))$ to $\xi$, the norm convergence of $(\psi_m)$, and the weak lower semicontinuity of the norm applied to the term $\norm{w_m(T)}^2_{L^2(\Omega)}$ then show that 
\begin{align*}
\limsup_{m\to \infty} & \bigg[\iint_{\Omega_T} |\nabla w_m - \nabla w|^p \d x \d t\bigg]^\nu \\
&\leq 
c\int_0^T\big[\langle F, w\rangle - \langle \xi, w\rangle\big]\d t  + \tfrac12\norm{\Psi}^2_{L^2(\Omega)} - \tfrac12\norm{w^*}^2_{L^2(\Omega)} = 0,
\end{align*}
where in the last step we have used \eqref{eq_for_w} applied to $w$ and integrated over $[0,T]$. Thus we have obtained the desired $L^p$-convergence of $\nabla w_m$ to $\nabla w$. To see that the weak convergence of $\A (w_m)$ to $\A (w) $ follows from this, we use Lemma \ref{p-laplace-estim} and obtain
\begin{align*}
&|\langle \A (w_m) -\A (w), v\rangle| \\
&\qquad\leq c\iint_{\Omega_T} \big| |\nabla w_m + \nabla z|^{p-2}(\nabla w_m + \nabla z) - |\nabla w + \nabla z|^{p-2}(\nabla w + \nabla z)\big| |\nabla v|\d x \d t
\\
&\qquad\leq c\iint_{\Omega_T}\big(|\nabla w + \nabla z| + |\nabla w_m-\nabla w|\big)^{p-2}|\nabla w_m - \nabla w| |\nabla v| \d x \d t
\\
&\qquad\leq c \iint_{\Omega_T} \big(|\nabla w_m -\nabla w|^{p-1} + b_p|\nabla w + \nabla z|^{p-2}|\nabla w_m -\nabla w|\big)|\nabla v| \d x \d t,
\end{align*}
where $b_p = 0$ if $p<2$ and $b_p=1$ if $p\ge 2$. H\"older's inequality and the $L^p$-convergence of $\nabla w_m$ show that the last expression converges to zero as $m \to \infty$, so we have confirmed that $\A (w) = \xi$.
\end{proof}

\subsection{Properties of the approximating solutions}

In this section we investigate the properties of the approximating solutions obtained in Lemma~\ref{lem:approx-sol}. We prove lower bounds for the solutions $v_k$ in terms of $k$ and upper bounds which are independent of $k$. Moreover, we obtain a uniform bound for the $L^p(\Omega_T)$-norms of the gradients $\nabla v_k^\beta$. 

\begin{lem}\label{lem:lower-bound-vk}
Let $k>1$ and $v_k$ be an admissible weak solution to the Cauchy-Dirichlet problem \eqref{approxprob} in the sense of Definition~\ref{approxdef}. Then we have $v_k\geq \tfrac1k$ a.e.~in $\Omega_T$.
\end{lem}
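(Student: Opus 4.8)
The plan is to establish the lower bound by a comparison‑type argument: test the equation for $v_k$ against (a truncated version of) the negative part of $v_k-\tfrac1k$, and exploit the sign of $f$ together with the monotonicity of the $p$‑Laplacian vector field. Set $w:=v_k-\tfrac1k$. Since $v_k=\tfrac1k$ on the lateral boundary we have $w\in L^p(0,T;W^{1,p}_0(\Omega))$, and the assumption $\Psi\ge0$ in \eqref{assumption:data} gives $w(\cdot,0)=\Psi\ge0$, i.e. $v_k(\cdot,0)\ge\tfrac1k$. It suffices to show that the negative part $w_-:=\max\{-w,0\}$ vanishes a.e.\ in $\Omega_T$.

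To handle the time derivative I would work with the Steklov‑averaged identity \eqref{trippa}, which is available for test functions in $L^p(0,T;W^{1,p}_0(\Omega))\cap L^\infty(\Omega_T)$. For a fixed truncation level $M>0$ put $\Phi_M(s):=\min\{\max\{\tfrac1k-s,0\},M\}$ and test \eqref{trippa} on $[0,t]$ (with $t<T$ and $h$ small) with $\varphi_h:=-\Phi_M([v_k]_h)$; the truncation enters only because $w$ is not known a priori to be bounded, and any fixed $M$ will suffice. Let $\mathcal{J}_M$ be the nonnegative convex antiderivative of $-\Phi_M$ normalised by $\mathcal{J}_M(\tfrac1k)=0$, so that $\mathcal{J}_M(s)=0$ precisely when $s\ge\tfrac1k$. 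The chain rule for the (absolutely continuous in time) Steklov average turns the parabolic term into $\int_\Omega\mathcal{J}_M([v_k]_h(\cdot,t))\,\d x-\int_\Omega\mathcal{J}_M([v_k]_h(\cdot,0))\,\d x$. Letting $h\downarrow0$ and using $v_k\in C([0,T];L^2(\Omega))$, the term at time $0$ tends to $\int_\Omega\mathcal{J}_M(v_k(\cdot,0))\,\d x=0$ because $v_k(\cdot,0)\ge\tfrac1k$, while the term at time $t$ tends to $\int_\Omega\mathcal{J}_M(v_k(\cdot,t))\,\d x$. In the flux term I would use \eqref{Ak-bd} together with \eqref{assumption:data} to see $A_k(v_k,\nabla v_k)\in L^{p'}(\Omega_T)$, whence $[A_k(v_k,\nabla v_k)]_h\to A_k(v_k,\nabla v_k)$ strongly, combined with $\nabla\varphi_h\to\mathbf 1_{\{\frac1k-M<v_k<\frac1k\}}\nabla v_k$ in $L^p$ and the analogous convergence in the $f$‑term; and on $\{v_k<\tfrac1k\}$ one has $A_k(v_k,\nabla v_k)=k^{-\alpha}|\nabla v_k+\nabla z|^{p-2}(\nabla v_k+\nabla z)$ by \eqref{A_k-alt-expression}.

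Combining the $\nabla z$ contribution of the flux with the explicit $\nabla z$ term on the right of \eqref{trippa}, the limiting identity takes the form
\begin{align*}
\int_\Omega\mathcal{J}_M(v_k(\cdot,t))\,\d x
&+k^{-\alpha}\iint_{(\Omega\times(0,t))\cap\{\frac1k-M<v_k<\frac1k\}}\big(|\xi|^{p-2}\xi-|\eta|^{p-2}\eta\big)\cdot(\xi-\eta)\,\d x\d s\\
&=-\iint_{\Omega\times(0,t)}f\,\Phi_M(v_k)\,\d x\d s,
\end{align*}
where $\xi:=\nabla v_k+\nabla z$ and $\eta:=\nabla z$, so that $\xi-\eta=\nabla v_k$. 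The second term on the left is nonnegative by the monotonicity of the $p$‑Laplacian vector field (Lemma~\ref{lem:monotone}), and the right‑hand side is nonpositive because $f\ge0$ and $\Phi_M\ge0$ — this is the only place where the sign hypothesis on $f$ is used. Hence $\int_\Omega\mathcal{J}_M(v_k(\cdot,t))\,\d x\le0$, and since $\mathcal{J}_M\ge0$ with $\mathcal{J}_M(s)=0$ iff $s\ge\tfrac1k$, we conclude $v_k(\cdot,t)\ge\tfrac1k$ a.e.\ in $\Omega$ for every $t<T$, and then also for $t=T$ by continuity.

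I expect the main technical obstacle to be the passage $h\downarrow0$ in the flux term, in particular the $L^p$‑convergence of the truncated gradients $\nabla\varphi_h$, which relies on the piecewise‑affine structure of $\Phi_M$ and on the fact that $\nabla v_k=0$ a.e.\ on the level sets $\{v_k=\tfrac1k\}$ and $\{v_k=\tfrac1k-M\}$; by contrast the algebraically essential step is the regrouping that exposes the monotone difference $(|\xi|^{p-2}\xi-|\eta|^{p-2}\eta)\cdot(\xi-\eta)\ge0$, after which the conclusion follows immediately from that inequality together with $f\ge0$.
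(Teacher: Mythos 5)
Your proof is correct and follows the same overall strategy as the paper: a comparison‐principle argument that tests the mollified identity \eqref{trippa} with a nonnegative test function supported on $\{v_k<\tfrac1k\}$, uses the expression \eqref{A_k-alt-expression} for $A_k$ there, regroups with the explicit $\nabla z$ term so that the monotone difference $(|\xi|^{p-2}\xi-|\eta|^{p-2}\eta)\cdot(\xi-\eta)$ appears, and discards the right side via $f\ge0$. The technical execution of the time mollification differs from the paper's, though, and the difference is worth noting.

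The paper keeps the test function fixed and independent of the mollification parameter, taking $\varphi=H_\delta(\tfrac1k-v_k)$ (a smoothed indicator of $\{v_k<\tfrac1k\}$, bounded by $1$) and pairing it with the \emph{reversed} Steklov average $[v_k]_{\bar h}$; since the chain rule does not apply exactly, the paper invokes the convexity of the antiderivative $G_\delta$ to obtain the one‑sided inequality $\partial_t[G_\delta(\tfrac1k-v_k)]_{\bar h}\le -\varphi\,\partial_t[v_k]_{\bar h}$. Because the reversed Steklov average requires $a\ge h$, the initial time is reached only after two limits ($h\downarrow0$, then $a\downarrow0$), and a final $\delta\to0$ cosmetically converts $G_\delta$ into $(\tfrac1k-v_k)_+$. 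You instead put the Steklov average inside the test function, $\varphi_h=-\Phi_M([v_k]_h)$, which gives an \emph{exact} chain rule $\partial_t[v_k]_h\,\varphi_h=\partial_t\mathcal J_M([v_k]_h)$ and lets you start directly at $a=0$ with the forward Steklov average; the price is that you must verify $\nabla\varphi_h\to\chi_{\{1/k-M<v_k<1/k\}}\nabla v_k$ in $L^p$, which you correctly flag as the nontrivial step. That convergence does go through (a.e.\ convergence of the truncated indicators plus $\nabla v_k=0$ a.e.\ on the level sets $\{v_k=\tfrac1k\}$ and $\{v_k=\tfrac1k-M\}$, together with $\nabla[v_k]_h\to\nabla v_k$ in $L^p$ and dominated convergence), but it is precisely the technicality the paper sidesteps by fixing the test function and shifting the burden to the convexity inequality. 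Your truncation parameter $M$ plays the same boundedness‑ensuring role as the paper's $\delta$, but since $\mathcal J_M(s)=0$ iff $s\ge\tfrac1k$ for any fixed $M$, you need no subsequent limit in $M$, which is a small simplification. Both routes are standard in the parabolic literature and lead to the same conclusion; neither is materially stronger than the other.
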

\begin{proof}[Proof]
We use a comparison principle argument. Consider the version of \eqref{trippa} with Steklov means $[\,\cdot\,]_{\bar h}$, that is
\begin{equation*}
\int^b_a\int_\Omega \Big[\partial_t [v_k]_{\bar h}\varphi + [A_k(v_k,\nabla v_k)]_{\bar h}\cdot \nabla \varphi \Big] \d x \d t = \int^b_a \int_\Omega \big[[f]_{\bar h}\varphi + k^{-\alpha}|\nabla z|^{p-2}\nabla z\cdot\nabla\varphi\big]\d x \d t,
\end{equation*}
for $h<a<b<T$ and the test function $\varphi=H_\delta(\tfrac1k-v_k)$, where
\begin{align*}
H_\delta(s):=\begin{cases}
0, \hspace{5mm}  &s<0
\\
\frac{s}{\delta}, &s\in [0,\delta]
\\
1, & s>\delta.
\end{cases}
\end{align*}
The test function is admissible since $v_k-\tfrac1k\in L^p(0,T; W^{1,p}_0(\Omega))$, and $H_\delta$ and $H_\delta'$ are bounded. We define $G_\delta$ as
\begin{align*}
G_\delta(s):=\int^s_0 H_\delta(\sigma)\d \sigma = \begin{cases}
0, \hspace{5mm} &s<0
\\
\frac{s^2}{2\delta},  &s\in [0,\delta]
\\
s-\frac{\delta}{2}, &s>\delta.
\end{cases}
\end{align*}
By the properties of the Steklov average and the convexity of $G_\delta$, we have 
\begin{align*}
\partial_t\big[G_\delta(\tfrac1k-v_k)\big]_{\bar{h}}(x,t)&=\tfrac1h\big[G_\delta(\tfrac1k-v_k)(x,t)-G_\delta(\tfrac1k-v_k)(x,t-h)\big]
\\
&\leq \tfrac1h H_\delta(\tfrac1k-v_k)(x,t)\big[ (\tfrac1k-v_k)(x,t) - (\tfrac1k-v_k)(x,t-h)\big]
\\
& = -\tfrac1hH_\delta(\tfrac1k-v_k)(x,t)\big[v_k(x,t)-v_k(x,t-h)\big]
\\
& = -\varphi(x,t) \partial_t[v_k]_{\bar{h}}(x,t).
\end{align*} 
Using this estimate and the fact that $f\geq 0$ in the above identity, we obtain
\begin{align*}
\int^b_a\int_\Omega \Big[\partial_t\big[G_\delta(\tfrac1k-v_k)\big]_{\bar{h}}  + \big[k^{-\alpha}|\nabla z|^{p-2}\nabla z - [A_k(v_k,\nabla v_k)]_{\bar{h}}\big]\cdot \nabla \varphi \Big]\d x \d t \leq 0.
\end{align*}
Taking $h\downarrow 0$, we find that
\begin{align}\label{ribollita}
\bigg[\int_\Omega G_\delta(\tfrac1k-v_k) \d x\bigg]^b_a +
\int^b_a\int_\Omega \big[k^{-\alpha}|\nabla z|^{p-2}\nabla z - A_k(v_k,\nabla v_k)\big] \cdot \nabla \varphi \d x \d t  \leq 0
\end{align}
holds true for a.e. $0<a<b<T$. 
Note that
\begin{align*}
\nabla \varphi = -H'_\delta(\tfrac1k-v_k)\nabla v_k=-\delta^{-1}\chi_{\{0<\frac1k-v_k<\delta\}}\nabla v_k= -\delta^{-1}\chi_{\{\frac1k-\delta < v_k < \frac1k\}}\nabla v_k.
\end{align*}
Thus, whenever the second integrand is nonzero we have $v_k<\tfrac1k$ and then 
\begin{align*}
&-\Big[k^{-\alpha}|\nabla z|^{p-2}\nabla z - A_k(v_k,\nabla v_k)\Big] \cdot \nabla v_k
\\
&\qquad = k^{-\alpha}\Big[ |\nabla v_k+\nabla z|^{p-2}(\nabla v_k+\nabla z)-|\nabla z|^{p-2}\nabla z\Big]\cdot (\nabla v_k+\nabla z -\nabla z)\geq 0.
\end{align*}
This shows that the second integral of \eqref{ribollita} is nonnegative so we can drop it. Thus, we end up with
\begin{align*}
\int_\Omega G_\delta(\tfrac1k-v_k) (x,b)\d x\leq \int_\Omega G_\delta(\tfrac1k-v_k) (x,a)\d x 
\end{align*}
for a.e. $0<a<b<T$. Since $v_k\in C([0,T];L^2(\Omega))$ and $v_k(0)=\frac{1}{k}+ \Psi$ we obtain, by taking $a\downarrow 0$ that
\begin{align*}
\int_\Omega G_\delta(\tfrac1k-v_k) (x,b)\d x\leq \int_\Omega G_\delta(-\Psi) (x)\d x = 0,
\end{align*}
where the last equality follows from the fact that $\Psi\geq 0$. Taking the limit $\delta\to 0$, we obtain
\begin{align*}
\int_\Omega (\tfrac1k-v_k)_+(x,b)\d x \leq 0,
\end{align*}
for a.e. $b \in(0,T)$. Thus, $v_k\geq \frac{1}{k}$ a.e. in $\Omega_T$.
\end{proof}
We now intend to show that the approximative solutions $v_k$ are bounded in the $L^\infty$-norm by a constant independent of $k$. In the proof we will make use of the truncation defined in \eqref{truncation} and the function
\begin{align}\label{v-tilde}
\tilde{v}_k:=T_k\circ v_k.
\end{align}
Note that for $s\ge\frac1k$ we have $T_k(s)=\min\{s,k\}$ and hence $\tilde{v}_k=\min\{v_k,k\}$ by Lemma~\ref{lem:lower-bound-vk}.
The proof is divided into three steps. First we establish an energy estimate for $\tilde{v}_k$. We use this result to show that the $L^{\beta p}$-norm of $\tilde{v}_k$ is bounded independently of $k$. Finally, the energy estimate is utilized in a De Giorgi type iteration to obtain a bound in terms of the $L^{\beta p}$-norm of $\tilde{v}_k$, which by the previous observation concludes the proof.
\begin{lem}\label{lem:energy}
Let $k>1$ and $v_k$ be an admissible weak solution to the Cauchy-Dirichlet problem \eqref{approxprob} in the sense of Definition~\ref{approxdef} and let $M\geq \sup_{\bar{\Omega}}\Psi +1$. Then, the function $\tilde{v}_k$ defined in \eqref{v-tilde} satisfies
\begin{align}\label{caccio_tilde_v}
\sup_{\tau\in [0,T]} \int_\Omega & \big( \tilde{v}_k^{\frac{\beta+1}{2}}-M^{\frac{\beta+1}{2}}\big)^2_+(x,\tau)\d x + \iint_{\Omega_T} \big|\nabla (\tilde{v}_k^\beta-M^\beta)_+\big|^p \d x\d t 
\\
\notag &\leq c \iint_{\Omega_T \cap \{\tilde{v}_k>M\}} \big[|\nabla z|^{\beta p}+ f^{p'} + M^{\beta p}\big]\d x\d t,
\end{align}
for a constant $c=c(\beta,p,\Omega)$ which does not depend on $k$.
\end{lem}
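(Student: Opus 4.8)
The plan is to test the Steklov-regularized equation \eqref{trippa} for $v_k$ with the time-dependent function $\varphi=\big(T_k([v_k]_h)^\beta-M^\beta\big)_+$ and then let $h\downarrow0$. We may assume $k>M$, since otherwise $\tilde v_k\le k\le M$ and both sides of \eqref{caccio_tilde_v} vanish. Set $G:=(\tilde v_k^\beta-M^\beta)_+$, $\mathcal G(s):=\int_M^s\big(T_k(\sigma)^\beta-M^\beta\big)_+\,d\sigma$, and $\Omega_\tau:=\Omega\times(0,\tau)$. Since $\varphi=\mathcal G'([v_k]_h)$, the parabolic part of \eqref{trippa} on $(0,\tau)$ equals the increment of $\int_\Omega\mathcal G([v_k]_h)\,\d x$ over $[0,\tau]$; its value at $t=0$ vanishes because $v_k(\cdot,0)=\tfrac1k+\Psi<M$ a.e.\ (using $k>1$ and $M\ge\sup_{\bar\Omega}\Psi+1$) and $[v_k]_h(\cdot,0)\to v_k(\cdot,0)$ in $L^2(\Omega)$ by the time continuity of $v_k$. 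Using the standard $L^p$- (resp.\ $L^{p'}$-) convergence of the Steklov means of $v_k$, $\nabla v_k$, $A_k(v_k,\nabla v_k)$ and $f$, together with the boundedness and Lipschitz continuity of $s\mapsto(T_k(s)^\beta-M^\beta)_+$ and the fact that $\nabla v_k=0$ a.e.\ on $\{v_k=M\}\cup\{v_k=k\}$ (so that $\nabla\varphi\to\nabla G$ in $L^p(\Omega_T)$), the limit $h\downarrow0$ gives, for a.e.\ $\tau\in(0,T)$,
\[
\int_\Omega\mathcal G(v_k)(\cdot,\tau)\,\d x+\iint_{\Omega_\tau}A_k(v_k,\nabla v_k)\cdot\nabla G\,\d x\d t=\iint_{\Omega_\tau}\big[fG+k^{-\alpha}|\nabla z|^{p-2}\nabla z\cdot\nabla G\big]\,\d x\d t.
\]

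The two structural ingredients are the following. First, an elementary computation gives $\mathcal G(s)=\b[s,M]$ for $s\in[M,k]$, $\mathcal G(s)\ge\b[k,M]$ for $s>k$, and $\mathcal G(s)=0$ for $s<M$, so Lemma~\ref{estimates:boundary_terms}(i) yields the pointwise bound $\mathcal G(v_k)\ge\tfrac1c(\tilde v_k^{(\beta+1)/2}-M^{(\beta+1)/2})_+^2$ with $c=c(\beta)$. Second, $\nabla G$ vanishes off $\{M<v_k<k\}$, where $\tilde v_k=v_k$, $\nabla G=\nabla v_k^\beta$, and — by \eqref{A_k-alt-expression} and $\alpha=(\beta-1)(p-1)$ — $A_k(v_k,\nabla v_k)=\beta^{1-p}|\nabla v_k^\beta+\beta v_k^{\beta-1}\nabla z|^{p-2}(\nabla v_k^\beta+\beta v_k^{\beta-1}\nabla z)$; hence Lemma~\ref{lem:V} with $\xi=\nabla v_k^\beta$, $\eta=\beta v_k^{\beta-1}\nabla z$ gives $A_k(v_k,\nabla v_k)\cdot\nabla G\ge\tfrac1c|\nabla G|^p-c\,v_k^{(\beta-1)p}|\nabla z|^p$ there.

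To finish, one estimates the error terms by Young's inequality. Since $v_k^\beta=G+M^\beta$ on $\{M<v_k<k\}$, Young with exponents $\beta$ and $\tfrac{\beta}{\beta-1}$ gives $v_k^{(\beta-1)p}|\nabla z|^p\le\delta(G^p+M^{\beta p})+c_\delta|\nabla z|^{\beta p}$; similarly $fG\le\delta G^p+c_\delta f^{p'}$ and, using $k^{-\alpha}\le1$ and $M\ge1$, $k^{-\alpha}|\nabla z|^{p-1}|\nabla G|\le\delta|\nabla G|^p+c_\delta(M^{\beta p}+|\nabla z|^{\beta p})$. All the extra integrals are supported in $\{\tilde v_k>M\}$, and the contributions $\delta\iint_{\Omega_\tau}|\nabla G|^p$ and $\delta\iint_{\Omega_\tau}G^p$ are absorbed into the left-hand side, the latter after the space Poincar\'e inequality $\|G(\cdot,t)\|_{L^p(\Omega)}\le c(\Omega)\|\nabla G(\cdot,t)\|_{L^p(\Omega)}$, valid since $G(\cdot,t)\in W^{1,p}_0(\Omega)$. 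Choosing $\delta=\delta(\beta,p,\Omega)$ small we obtain, for a.e.\ $\tau$,
\[
\int_\Omega\big(\tilde v_k^{(\beta+1)/2}-M^{(\beta+1)/2}\big)_+^2(\cdot,\tau)\,\d x+\iint_{\Omega_\tau}\big|\nabla(\tilde v_k^\beta-M^\beta)_+\big|^p\,\d x\d t\le c\iint_{\Omega_T\cap\{\tilde v_k>M\}}\big[|\nabla z|^{\beta p}+f^{p'}+M^{\beta p}\big]\,\d x\d t,
\]
with $c=c(\beta,p,\Omega)$. The right-hand side being independent of $\tau$, the claim \eqref{caccio_tilde_v} follows by letting $\tau\uparrow T$ in the gradient term (monotone convergence) and by taking the supremum over $\tau\in[0,T]$ in the first term, which is legitimate because $\tau\mapsto\int_\Omega(\tilde v_k^{(\beta+1)/2}-M^{(\beta+1)/2})_+^2(\cdot,\tau)\,\d x$ is continuous — a consequence of $v_k\in C([0,T];L^2(\Omega))$ and the Lipschitz boundedness of $s\mapsto(T_k(s)^{(\beta+1)/2}-M^{(\beta+1)/2})_+$. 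I expect the bulk of the work to be the (routine but lengthy) Steklov regularization that turns the time term into an exact increment and justifies the limit in the other terms; the one genuinely delicate estimate is that the weight $v_k^{(\beta-1)p}$ multiplying $|\nabla z|^p$ a priori depends on $k$, and it is precisely the relation $\alpha+\beta-1=(\beta-1)p$ together with the truncation identity $\tilde v_k^\beta=G+M^\beta$ that lets Young's inequality replace it by the $k$-independent quantities on the right.
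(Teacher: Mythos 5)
Your proof is correct and follows essentially the same approach as the paper: both test the Steklov-mollified equation \eqref{trippa} with $\big(T_k([v_k]_h)^\beta-M^\beta\big)_+$, use Lemma~\ref{lem:V} for coercivity on $\{M<v_k<k\}$, and finish with Young and Poincar\'e. The only cosmetic difference is in the handling of the parabolic term: you use the increment of $\int_\Omega\mathcal G([v_k]_h)\,\d x$ over $[0,\tau]$ directly from \eqref{trippa} with endpoints $a=0$, $b=\tau$, while the paper inserts a piecewise-linear time cutoff $\zeta_{\tau,\varepsilon}$ before passing to the limit; these are equivalent routes to the same identity.
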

\begin{proof}[Proof]
In the case $k\le M$ the inequality trivially holds, since $\tilde v_k\le k\le M$. Therefore, we are left with the case $k> M$. 
For $0<\varepsilon<\tau<\tau+\varepsilon<T$ and $\zeta_{\tau,\varepsilon}$ we define 
\begin{align*}
\zeta_{\tau,\varepsilon}(t)=\begin{cases}
0, \hspace{5mm}  & t<0
\\
\frac1\varepsilon t, & t\in [0,\varepsilon]
\\
1, \hspace{5mm}  &t\in [\varepsilon, \tau]
\\
1-\frac1\varepsilon(t-\tau), &t\in [\tau,\tau+\varepsilon]
\\
0, & t>\tau+\varepsilon.
\end{cases}
\end{align*}
We use the mollified formulation \eqref{trippa} of the differential equation with $a=0$, $b=T-h$ and $\varphi=\zeta_{\tau,\varepsilon}\big(T_k([v_k]_h)^\beta-M^\beta\big)_+$. We take $h$ so small that the factor $\zeta_{\tau,\varepsilon}$ is supported in $[0,T-h]$. Then the boundedness of $T_k([v_k]_h)$ and the chain rule imply that $\varphi \in L^p(0,T;W^{1,p}_0(\Omega))\cap L^\infty(\Omega_T)$, which means that $\varphi$ is admissible as a test function. 

Our goal is to pass to the limit $h\downarrow 0$ in the mollified differential equation, possibly replacing the equality by a suitable estimate. For the elliptic term we conclude that
\begin{align*}
\lim_{h\downarrow 0}\iint_{\Omega_{T-h}}[A_k(v_k,\nabla v_k)]_h\cdot \nabla \varphi \d x \d t = \iint_{\Omega_T} \zeta_{\tau,\varepsilon} A_k(v_k,\nabla v_k)\cdot \nabla \big(\tilde{v}_k^\beta-M^\beta\big)_+ \d x\d t.
\end{align*}
The convergence of $\nabla\varphi$ in the $L^p$-norm can be seen from the chain rule, and the fact that the outer function $s\mapsto (T_k(s)^\beta-M^\beta)_+$ is piecewise $C^1$ with bounded derivative. 
We now introduce the abbreviation
\begin{align*}
g(s):=\big(T_k(s)^\beta-M^\beta\big)_+,\hspace{3mm} G(s):=\int^s_0 g(t)\d t=\begin{cases}
0, \hspace{5mm} &s\leq M
\\[3pt]
\b[s,M], &s\in [M,k]
\\[3pt]
\b[k,M]+(k^\beta-M^\beta)(s-k), &s>k
\end{cases}
\end{align*}
and observe that $\varphi=\zeta_{\tau,\varepsilon}g([v_k]_h)$. Recall that the boundary term $\b$ has been introduced in \eqref{definition:F}. 
This allows us to treat the parabolic term as 
\begin{align*}
\iint_{\Omega_{T-h}} \partial_t [v_k]_h \varphi \d x \d t &= 
\iint_{\Omega_{T-h}} \zeta_{\tau,\varepsilon} \partial_t [v_k]_h g([v_k]_h) \d x \d t = \iint_{\Omega_{T-h}} \zeta_{\tau,\varepsilon} \partial_t G([v_k]_h) \d x \d t 
\\
&= - \iint_{\Omega_{T-h}} \zeta_{\tau,\varepsilon}' G([v_k]_h) \d x \d t 
\\
&\xrightarrow[h\downarrow 0]{}- \iint_{\Omega_T} \zeta_{\tau,\varepsilon}' G(v_k)  \d x \d t 
\\
&= \frac1\varepsilon\int^{\tau+\varepsilon}_\tau \int_\Omega G(v_k)\d x \d t- \frac1\varepsilon\int^{\varepsilon}_0 \int_\Omega G(v_k)\d x \d t,
\\
&\xrightarrow[\varepsilon\downarrow 0]{}\int_\Omega G(v_k)(x,\tau)\d x.
\end{align*}
The limit of the second term vanishes since $G(v_k)(x,0)=G(\frac1k+\Psi(x))=0$ and $M\ge \sup_{\bar{\Omega}}\Psi +1$ by assumption. The limits exist since $\int_\Omega G(v_k)(x,t) \d x$ is continuous with respect to $t$. The continuity can be concluded from the fact that $G$ is Lipschitz and $v_k\in C([0,T]; L^2(\Omega))$. Therefore, we have after passing to the limits $h\downarrow 0$ and $\varepsilon \downarrow 0$ that
\begin{align}\label{caccio-1}
\int_\Omega & G(v_k)(x,\tau)\d x + \iint_{\Omega_\tau}  A_k(v_k,\nabla v_k)\cdot \nabla \big(\tilde{v}_k^\beta-M^\beta\big)_+ \d x\d t \nonumber\\
&= \iint_{\Omega_\tau} \big[f\big(\tilde{v}_k^\beta-M^\beta\big)_+ + k^{-\alpha}|\nabla z|^{p-2}\nabla z\cdot \nabla \big(\tilde{v}_k^\beta-M^\beta\big)_+\big] \d x \d t
\end{align}
holds true for any $\tau\in(0,T]$. 
Using the expression \eqref{A_k-alt-expression} for $A_k$, the fact that $\nabla (\tilde{v}_k^\beta-M^\beta)_+=0$ a.e.~on the sets $\{(x,t)\in\Omega_T:v_k(x,t)\ge k\}$ and $\{(x,t)\in\Omega_T:v_k(x,t)\le M\}$, the chain rule and Lemma~\ref{lem:V}, we have
\begin{align*}
A_k(v_k, \nabla v_k) \cdot \nabla \big(\tilde{v}_k^\beta-M^\beta\big)_+  &= \tilde{v}_k^\alpha |\nabla v_k+\nabla z |^{p-2}(\nabla v_k+\nabla z) \cdot \nabla \big(\tilde{v}_k^\beta-M^\beta\big)_+ 
\\
&= \tilde{v}_k^\alpha |\nabla \tilde{v}_k+\nabla z |^{p-2}(\nabla \tilde{v}_k+\nabla z) \cdot \nabla \big(\tilde{v}_k^\beta-M^\beta\big)_+ 
\\
&= \beta^{1-p} \chi_{\{\tilde{v}_k > M\}}|\nabla \tilde{v}_k^\beta+\beta \tilde{v}_k^{\beta-1}\nabla z|^{p-2}\big(\nabla \tilde{v}_k^\beta+\beta \tilde{v}_k^{\beta-1} \nabla z\big)\cdot \nabla \tilde{v}_k^\beta 
\\
&\geq \beta^{1-p} \chi_{\{\tilde{v}_k > M\}} \big[2^{-p}|\nabla \tilde{v}_k^\beta|^p - 2^p\beta^p \tilde{v}_k^{p(\beta-1)}|\nabla z|^p\big]
\\
&= 2^{-p}\beta^{1-p} \big|\nabla (\tilde{v}_k^\beta-M^\beta)_+\big|^p -2^p\beta \chi_{\{\tilde{v}_k>M\}}\tilde{v}_k^{p(\beta-1)}|\nabla z|^p.
\end{align*}
Moreover, using the definition of $G$ and Lemma \ref{estimates:boundary_terms}\,(i), we can estimate
\begin{align*}
G(v_k)\geq \chi_{\{\tilde{v}_k>M\}}\b[\tilde{v},M]\geq c(\beta)\,\big(\tilde{v}_k^\frac{\beta+1}{2}-M^\frac{\beta+1}{2}\big)^2_+.
\end{align*}
The last term in the integral on the right-hand side of \eqref{caccio-1} can be estimated using the Schwarz inequality, Young's inequality and the fact that $k>1$ as 
\begin{align*}
k^{-\alpha}|\nabla z|^{p-2}\nabla z\cdot \nabla (\tilde{v}_k^\beta-M^\beta)_+ &\leq |\nabla z|^{p-1}|\nabla (\tilde{v}_k^\beta-M^\beta)_+|
\\
&\leq \tfrac{\beta^{1-p}}{2^{p+1}}|\nabla (\tilde{v}_k^\beta-M^\beta)_+|^p +
c(\beta,p)\, \chi_{\{\tilde{v}_k>M\}}|\nabla z|^p .
\end{align*}
The integral over the first term can be included in the first term on the left-hand side of \eqref{caccio-1} and we end up with
\begin{align*}
&\int_\Omega \big(\tilde{v}_k^\frac{\beta+1}{2}-M^\frac{\beta+1}{2}\big)^2_+(x,\tau)\d x + \iint_{\Omega_\tau}  |\nabla (\tilde{v}_k^\beta-M^\beta)_+|^p\d x\d t 
\\
&\qquad\qquad\leq c \iint_{\Omega_\tau\cap \{\tilde{v}_k>M\}} \big[\tilde{v}_k^{p(\beta-1)} |\nabla z|^p +  f(\tilde{v}_k^\beta-M^\beta)\big]  \d x \d t
\end{align*}
for any $\tau\in(0,T]$ and a constant $c=c(\beta,p)$. 
Since $(\tilde{v}_k^\beta-M^\beta)_+\in L^p(0,T;W^{1,p}_0(\Omega))$, we have by Young's and Poincar\'e's inequality for any $\epsilon\in(0,1)$ that
\begin{align*}
&\iint_{\Omega_\tau\cap \{\tilde{v}_k>M\}} f(\tilde{v}_k^\beta-M^\beta)  \d x \d t\\
&\qquad\le
\epsilon\iint_{\Omega_\tau} (\tilde{v}_k^\beta-M^\beta)_+^p \d x \d t + 
\epsilon^{-\frac1{p-1}}\iint_{\Omega_\tau\cap \{\tilde{v}_k>M\}} f^{p'} \d x \d t \\
&\qquad\le
\epsilon c\iint_{\Omega_\tau} |\nabla(\tilde{v}_k^\beta-M^\beta)_+|^p \d x \d t + 
\epsilon^{-\frac1{p-1}}\iint_{\Omega_\tau\cap \{\tilde{v}_k>M\}} f^{p'} \d x \d t
\end{align*}
and 
\begin{align*}
&\iint_{\Omega_\tau\cap \{\tilde{v}_k>M\}} \tilde{v}_k^{p(\beta-1)} |\nabla z|^p \d x \d t \\
&\qquad\le
\epsilon\iint_{\Omega_\tau\cap \{\tilde{v}>M\}} \tilde{v}_k^{\beta p} \d x \d t +
\epsilon^{-\frac1{\beta-1}}\iint_{\Omega_\tau\cap \{\tilde{v}_k>M\}} |\nabla z|^{\beta p} \d x \d t \\
&\qquad\le
\epsilon c\iint_{\Omega_\tau} (\tilde{v}_k^\beta-M^\beta)_+^p  \d x \d t +
c\,\epsilon^{-\frac1{\beta-1}}\iint_{\Omega_\tau\cap \{\tilde{v}_k>M\}} 
\big[|\nabla z|^{\beta p} + M^{\beta p}\big] \d x \d t\\
&\qquad\le
\epsilon c\iint_{\Omega_\tau} |\nabla(\tilde{v}_k^\beta-M^\beta)_+|^p  \d x \d t +
c\,\epsilon^{-\frac1{\beta-1}}\iint_{\Omega_\tau\cap \{\tilde{v}_k>M\}} 
\big[|\nabla z|^{\beta p} + M^{\beta p}\big] \d x \d t.
\end{align*}
Due to the Poincar\'e inequality, the constant $c$ depends on $\Omega$. Choosing $\epsilon$ small enough we can re-absorb the terms involving $|\nabla(\tilde{v}^\beta-M^\beta)_+|^p$ into the left-hand side. This leads us to
\begin{align*}
&\int_\Omega \big(\tilde{v}_k^\frac{\beta+1}{2}-M^\frac{\beta+1}{2}\big)^2_+(x,\tau)\d x + \iint_{\Omega_\tau}  |\nabla (\tilde{v}_k^\beta-M^\beta)_+|^p\d x\d t  
\\
&\qquad\leq c \iint_{\Omega_\tau\cap \{\tilde{v}_k>M\}} \big[|\nabla z|^{\beta p} + f^{p'} + M^{\beta p}\big] \d x \d t.
\end{align*}
In the first term on the right-hand side we take the supremum over $\tau\in [0,T]$, while in the second one we choose $\tau=T$. Proceeding in this way we end up with inequality \eqref{caccio_tilde_v}.
\end{proof}
We utilize the previous lemma to show that the integral of $\tilde{v}_k^{\beta p}$ is bounded independently of $k$. 
\begin{cor}\label{cor:zanzara}
Let $k>1$ and $v_k$ be an admissible weak solution to the Cauchy-Dirichlet problem \eqref{approxprob} in the sense of Definition~\ref{approxdef}. Then, there is a constant $c$ depending only on $\beta$, $p$ and the domain $\Omega$ such that the function $\tilde{v}_k$ defined in \eqref{v-tilde} satisfies
\begin{align*}
\iint_{\Omega_T}\tilde{v}_k^{\beta p}\d x \d t \leq c\,K,
\end{align*}
where 
\begin{align}\label{def:P}
K := \iint_{\Omega_T } \big[|\nabla z|^{\beta p}+ f^{p'}\big] \d x\d t +
\Big(\sup_{\bar{\Omega}}\Psi^{\beta p} + 1\Big) |\Omega_T|.
\end{align}
\end{cor}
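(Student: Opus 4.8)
The plan is to obtain the bound directly from the energy estimate of Lemma~\ref{lem:energy} combined with Poincar\'e's inequality, so that no substantially new analytic input is required. First I would fix the admissible value $M:=\sup_{\bar{\Omega}}\Psi+1$ in Lemma~\ref{lem:energy}. For this choice the right-hand side of \eqref{caccio_tilde_v} is comparable to $K$: discarding the non-negative supremum term,
\begin{align*}
\iint_{\Omega_T}\big|\nabla(\tilde v_k^\beta-M^\beta)_+\big|^p\,\d x\d t
\le
c\iint_{\Omega_T}\big[|\nabla z|^{\beta p}+f^{p'}\big]\,\d x\d t+c\,M^{\beta p}|\Omega_T|,
\end{align*}
and since $\beta p>1$ we have $M^{\beta p}=(\sup_{\bar{\Omega}}\Psi+1)^{\beta p}\le c(\beta,p)\big(\sup_{\bar{\Omega}}\Psi^{\beta p}+1\big)$, so the right-hand side is at most $c(\beta,p,\Omega)\,K$, with a constant independent of $k$.

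Next I would pass from the gradient back to the function. Since $v_k=\tfrac1k<1<M$ on $\partial\Omega\times(0,T)$ and $v_k-\tfrac1k\in L^p(0,T;W^{1,p}_0(\Omega))$, the truncation $(\tilde v_k^\beta-M^\beta)_+=(T_k(v_k)^\beta-M^\beta)_+$ lies in $L^p(0,T;W^{1,p}_0(\Omega))$ as well. Applying Poincar\'e's inequality for a.e.\ $t$ and integrating in time therefore gives
\begin{align*}
\iint_{\Omega_T}(\tilde v_k^\beta-M^\beta)_+^p\,\d x\d t
\le c(\Omega,p)\iint_{\Omega_T}\big|\nabla(\tilde v_k^\beta-M^\beta)_+\big|^p\,\d x\d t
\le c\,K.
\end{align*}

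Finally I would split $\iint_{\Omega_T}\tilde v_k^{\beta p}\,\d x\d t$ over $\{\tilde v_k\le M\}$ and $\{\tilde v_k> M\}$. On the first set the integrand is at most $M^{\beta p}$, which contributes at most $M^{\beta p}|\Omega_T|\le c\,K$. On the second set one writes $\tilde v_k^{\beta p}=\big((\tilde v_k^\beta-M^\beta)+M^\beta\big)^p\le 2^{p-1}\big((\tilde v_k^\beta-M^\beta)_+^p+M^{\beta p}\big)$ and uses the bound from the previous step together with $M^{\beta p}|\Omega_T|\le c\,K$. Adding the two contributions yields the claim.

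I do not expect a real obstacle here: the genuine analytic content sits in Lemma~\ref{lem:energy}, and the only points needing care are that the choice $M=\sup_{\bar{\Omega}}\Psi+1$ makes the right-hand side of \eqref{caccio_tilde_v} comparable to $K$ (this is precisely why the term $\big(\sup_{\bar{\Omega}}\Psi^{\beta p}+1\big)|\Omega_T|$ appears in the definition \eqref{def:P} of $K$) and that $(\tilde v_k^\beta-M^\beta)_+$ has vanishing lateral boundary values, making Poincar\'e's inequality applicable. Note that the supremum term in \eqref{caccio_tilde_v} is not used in this corollary; it will instead be exploited in the De Giorgi iteration for the uniform $L^\infty$-bound proved afterwards.
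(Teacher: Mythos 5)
Your proposal is correct and follows essentially the same route as the paper: choose $M=\sup_{\bar\Omega}\Psi+1$ in the energy estimate of Lemma~\ref{lem:energy}, drop the supremum term, apply Poincar\'e's inequality slice-wise to $(\tilde v_k^\beta-M^\beta)_+\in L^p(0,T;W^{1,p}_0(\Omega))$, and then bound $\tilde v_k^{\beta p}$ by $(\tilde v_k^\beta-M^\beta)_+^p$ plus $M^{\beta p}$. Your explicit split over $\{\tilde v_k\le M\}$ and $\{\tilde v_k>M\}$ and the elementary estimate $M^{\beta p}\le c(\beta,p)(\sup_{\bar\Omega}\Psi^{\beta p}+1)$ are exactly what the paper does implicitly in one line.
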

\begin{proof}[Proof] 
We define $M_o=\sup_{\bar{\Omega}}\Psi +1$ and observe that $(\tilde{v}_k^\beta-M_o^\beta)_+\in L^p(0,T;W^{1,p}_0(\Omega))$. Combining Poincar\'e's inequality applied slice wise for a.e.~$t\in(0,T)$ and the energy estimate from Lemma~\ref{lem:energy} we obtain
\begin{align*}
\iint_{\Omega_T} (\tilde{v}_k^\beta-M_o^\beta)_+^p\d x\d t &\leq c\iint_{\Omega_T}|\nabla (\tilde{v}_k^\beta-M_o^\beta)_+|^p\d x \d t
\\
&\leq c\iint_{\Omega_T} \big[|\nabla z|^{\beta p} + f^{p'} + M_o^{\beta p}\big]\d x\d t.
\end{align*}
Thus, we have
\begin{align*}
\iint_{\Omega_T} \tilde{v}_k^{\beta p} \d x\d t 
%&= \iint_{\Omega_T \cap \{\tilde{v}_k>M_o\}} \tilde{v}_k^{\beta p} \d x\d t + \iint_{\Omega_T \cap \{\tilde{v}_k \leq M_o\}} \tilde{v}_k^{\beta p} \d x\d t
%\\
&\leq c \iint_{\Omega_T} \hspace{-1mm}(\tilde{v}_k^\beta-M_o^\beta)_+^p\d x\d t + 
c\,M_o^{\beta p}|\Omega_T|
\\
&\leq c \iint_{\Omega_T} \big[|\nabla z|^{\beta p} + f^{p'}\big]\d x\d t + c \, M_o^{\beta p} | \Omega_T|,
\end{align*}
with a constant $c=c(\beta,p,\Omega)$. 
This is the desired bound for the $L^{\beta p}$-norm of $\tilde v_k$.
\end{proof}

Now we are ready to prove the boundedness result.
\begin{lem}\label{lem:v_k-unifly-bdd}
Let $k>1$ and $v_k$ be an admissible weak solution to the Cauchy-Dirichlet problem \eqref{approxprob} in the sense of Definition~\ref{approxdef}. Then, there is a constant $L>0$ depending only on $n,\beta, p, \Omega_T, f, \psi, z$, and $ \sigma$ (and thus independent of $k$) such that for every $k> L$ we have
\begin{align*}
v_k\leq L
\quad\mbox{a.e.~in $\Omega_T$.}
\end{align*}
\end{lem}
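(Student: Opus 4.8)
The plan is to carry out the De Giorgi iteration announced as the third step before the statement, based on the energy estimate of Lemma~\ref{lem:energy} and the uniform $L^{\beta p}$-bound of Corollary~\ref{cor:zanzara}. I may assume $k>L$, where the level $L\ge\sup_{\bar\Omega}\Psi+1$ will be fixed at the end. By Lemma~\ref{lem:lower-bound-vk} we have $\tilde{v}_k=T_k\circ v_k=\min\{v_k,k\}$, so it suffices to prove $\tilde{v}_k\le 2L$ a.e.~in $\Omega_T$: if $k>2L$, then on $\{v_k\le k\}$ one gets $v_k=\tilde{v}_k\le 2L$, while $\{v_k>k\}$ would force $\tilde{v}_k=k>2L$; hence $v_k\le 2L$ a.e., and relabelling $2L$ as $L$ gives the claim.

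Set $M_j:=L(2-2^{-j})$, so that $M_0=L$, $M_j\uparrow 2L$ and $L\le M_j\le 2L$; let $A_j:=\Omega_T\cap\{\tilde{v}_k>M_j\}$ and take as iteration quantity the energy at level $M_j$,
$$
Y_j:=E_j:=\sup_{\tau\in[0,T]}\int_\Omega\big(\tilde{v}_k^{\frac{\beta+1}2}-M_j^{\frac{\beta+1}2}\big)^2_+(\cdot,\tau)\,\d x+\iint_{\Omega_T}\big|\nabla\big(\tilde{v}_k^\beta-M_j^\beta\big)_+\big|^p\,\d x \d t.
$$
Lemma~\ref{lem:energy} with $M=M_j$, H\"older's inequality with exponents $\sigma,\sigma'$ (this is where the integrability $|\nabla z|^{\beta p},f^{p'}\in L^\sigma(\Omega_T)$ enters) and $M_j\le 2L$ give $E_j\le c\,D\,|A_j|^{1/\sigma'}+c\,L^{\beta p}\,|A_j|$, with $D:=\||\nabla z|^{\beta p}\|_{L^\sigma(\Omega_T)}+\|f^{p'}\|_{L^\sigma(\Omega_T)}$. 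The decisive step is to estimate $|A_j|$ in terms of $E_{j-1}$: I apply the parabolic Sobolev inequality (Lemma~\ref{lemma:Gagliardo} with $q=2$) to $\psi_{j-1}:=\big(\tilde{v}_k^{(\beta+1)/2}-M_{j-1}^{(\beta+1)/2}\big)_+$, noting that on $A_{j-1}$ one has $\nabla\psi_{j-1}=\tfrac{\beta+1}{2\beta}\tilde{v}_k^{-(\beta-1)/2}\nabla(\tilde{v}_k^\beta-M_{j-1}^\beta)_+$, so that the weight $\tilde{v}_k^{-(\beta-1)/2}\le M_{j-1}^{-(\beta-1)/2}\le L^{-(\beta-1)/2}$ together with Lemma~\ref{lem:energy} yields $\iint|\nabla\psi_{j-1}|^p\le c\,L^{-p(\beta-1)/2}E_{j-1}$; combined with $\sup_\tau\int\psi_{j-1}^2\le E_{j-1}$ this gives $\iint_{\Omega_T}\psi_{j-1}^{\,p(1+2/n)}\,\d x \d t\le c\,L^{-p(\beta-1)/2}E_{j-1}^{(n+p)/n}$. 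Since $\psi_{j-1}\ge M_j^{(\beta+1)/2}-M_{j-1}^{(\beta+1)/2}\ge c\,L^{(\beta+1)/2}2^{-j}$ on $A_j$, I conclude
$$
|A_j|\le c\,2^{jp(1+2/n)}\,L^{-a}\,E_{j-1}^{(n+p)/n},\qquad a:=\tfrac{(\beta+1)p}{2}\big(1+\tfrac2n\big)+\tfrac{p(\beta-1)}{2}.
$$

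Inserting this into $E_j\le c\,D|A_j|^{1/\sigma'}+c\,L^{\beta p}|A_j|$ and using the algebraic identity $\beta p-a=-\tfrac{p(\beta+1)}{n}<0$ — which is exactly the mechanism by which the dangerous coefficient $M_j^{\beta p}\sim L^{\beta p}$ is absorbed by the $L$-powers produced by the level gap $M_j^{(\beta+1)/2}-M_{j-1}^{(\beta+1)/2}\sim L^{(\beta+1)/2}2^{-j}$ and by the weight $\tilde{v}_k^{-(\beta-1)/2}$ in $\nabla\psi_{j-1}$ — I obtain a recursion
$$
E_j\le c\,b^{\,j}L^{-a/\sigma'}E_{j-1}^{(n+p)/(n\sigma')}+c\,b^{\,j}L^{-p(\beta+1)/n}E_{j-1}^{(n+p)/n},\qquad b:=2^{p(1+2/n)},
$$
with $c$ depending only on $n,p,\beta,\sigma,\Omega_T$ and on $D$. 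Both exponents $\tfrac{n+p}{n\sigma'}$ and $\tfrac{n+p}{n}$ are strictly larger than $1$, the first precisely because $\sigma>\tfrac{n+p}{p}$, and both recursion constants carry a strictly negative power of $L$. A De Giorgi fast-convergence lemma (Lemma~\ref{fastconvg}, applied after a standard rescaling to this two-term recursion) then provides $\varepsilon_0>0$, of the form $(\text{const})\cdot L^{\kappa}$ with $\kappa>0$, such that $E_0\le\varepsilon_0$ implies $E_j\to0$. Now by Corollary~\ref{cor:zanzara}, $|A_0|=|\{\tilde{v}_k>L\}|\le L^{-\beta p}\iint_{\Omega_T}\tilde{v}_k^{\beta p}\,\d x \d t\le c\,K\,L^{-\beta p}$, so the energy estimate gives $E_0\le c(K^{1/\sigma'}+K)$, a bound independent of $k$ and $L$. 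Choosing $L\ge\sup_{\bar\Omega}\Psi+1$ large enough (in terms of $n,\beta,p,\Omega_T,f,\psi,z,\sigma$) so that $\varepsilon_0\ge c(K^{1/\sigma'}+K)$, we obtain $E_j\to0$; hence $|A_j|\le c\,b^{j}L^{-a}E_{j-1}^{(n+p)/n}\to0$ (the factor $b^j$ being beaten by the superexponential decay of $E_j$), so $|\{\tilde{v}_k\ge 2L\}|=\lim_j|A_j|=0$, i.e.~$\tilde{v}_k\le 2L$ a.e., which by the first paragraph finishes the proof.

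The main obstacle throughout is precisely the term $M^{\beta p}\chi_{\{\tilde{v}_k>M\}}$ on the right-hand side of the energy estimate of Lemma~\ref{lem:energy}: its coefficient grows like $L^{\beta p}$ with the bound we seek, so a crude treatment of $|A_j|$ would leave a recursion constant $C\sim L^{+\beta p}$ and break the scheme. Taking the energy as the iteration variable and estimating $|A_j|$ through the parabolic Sobolev inequality applied to the $\tfrac{\beta+1}{2}$-power excess $\psi_{j-1}$ (which contributes the extra weight $\tilde{v}_k^{-(\beta-1)/2}$) turns $\beta p-a$ negative, so that the recursion constant becomes a negative power of $L$ and the smallness requirement of the fast-convergence lemma is met simply by enlarging $L$; the condition that the Sobolev exponent stay above $1$ after division by $\sigma'$ is exactly the role of the hypothesis $\sigma>\tfrac{n+p}{p}$.
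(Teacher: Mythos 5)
Your proposal is correct in substance and reaches the same conclusion via a De Giorgi iteration, but it takes a genuinely different route from the paper's. The paper iterates on the quantity $Y_j:=\iint_{\Omega_T}(\tilde v_k^{\frac{\beta+1}{2}}-M_j^{\frac{\beta+1}{2}})_+^{\frac{2\beta p}{\beta+1}}\,\mathrm{d}x\,\mathrm{d}t$ with the level sequence $M_j=M(2-2^{-j})^{\frac{2}{\beta+1}}$; combining H\"older, the parabolic Sobolev inequality, the energy estimate and a Chebyshev bound $|A_{j+1}|\le c\,2^{j\cdot}M^{-\beta p}Y_j$, this produces directly a \emph{single-term} recursion $Y_{j+1}\le c\,M^{-\frac{\beta p m}{n+m}}(\dots)\,b^jY_j^{1+\delta}$, to which Lemma~\ref{fastconvg} applies without further ado. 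You instead iterate on the full energy $E_j$ at level $M_j=L(2-2^{-j})$, bound $|A_j|$ by applying the parabolic Sobolev inequality to $\psi_{j-1}=(\tilde v_k^{\frac{\beta+1}{2}}-M_{j-1}^{\frac{\beta+1}{2}})_+$ (picking up the extra weight $\tilde v_k^{-(\beta-1)/2}$, a nice touch), and then feed this back through the energy estimate. The arithmetic $\beta p-a=-\frac{p(\beta+1)}{n}<0$ is the same mechanism the paper exploits. The structural difference is that you obtain a \emph{two-term} recursion $E_j\le c_1 b^jE_{j-1}^{1+\delta_1}+c_2b^jE_{j-1}^{1+\delta_2}$, which Lemma~\ref{fastconvg} does not cover directly.

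The only point that really needs to be fleshed out is the sentence ``applied after a standard rescaling to this two-term recursion.'' The naive reduction $E_{j-1}^{1+\delta_1}+E_{j-1}^{1+\delta_2}\le 2E_{j-1}^{1+\min\{\delta_1,\delta_2\}}$ requires $E_{j-1}\le 1$, and you cannot arrange $E_0\le 1$ by enlarging $L$, since your bound $E_0\le c\,D\,K^{1/\sigma'}L^{-\beta p/\sigma'}+cK$ has an $L$-independent tail $cK$. The fix is exactly the rescaling you allude to, but it has to be done consistently: set $\hat E_j=E_j/E_0$, so that $\hat E_0=1$ and $\hat E_j\le c_1E_0^{\delta_1}b^j\hat E_{j-1}^{1+\delta_1}+c_2E_0^{\delta_2}b^j\hat E_{j-1}^{1+\delta_2}$; since $E_0$ is bounded by an $L$-independent constant while $c_1\sim L^{-a/\sigma'}$ and $c_2\sim L^{-p(\beta+1)/n}$, the combined constant $c_1E_0^{\delta_1}+c_2E_0^{\delta_2}$ can be made $\le b^{-1/\delta}$ for $L$ large, after which the induction $\hat E_j\le b^{-j/\delta}\le 1$ both validates the two-to-one reduction at every step and yields $\hat E_j\to 0$. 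With this spelled out, your claim that the admissible threshold $\varepsilon_0$ behaves like a positive power of $L$ is correct, and the rest of your argument (Chebyshev bound for $|A_0|$, passage from $E_j\to 0$ to $|A_j|\to 0$ and hence $\tilde v_k\le 2L$ a.e.) is sound. One small cosmetic point: the constant in your bound $E_0\le c(K^{1/\sigma'}+K)$ should be allowed to depend on $D=\||\nabla z|^{\beta p}\|_{L^\sigma}+\|f^{p'}\|_{L^\sigma}$ as well, but since $D$ depends only on the data this does not affect the conclusion.
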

\begin{proof}[Proof]
For $M\geq \sup_{\bar{\Omega}}\Psi+1$ we define the sequences
\begin{align*}
M_j:=M(2-2^{-j})^\frac{2}{\beta+1},\hspace{5mm} Y_j:=\iint_{\Omega_T}\big(\tilde{v}_k^\frac{\beta+1}{2}-M^\frac{\beta+1}{2}_j\big)_+^\frac{2\beta p}{\beta+1}\d x\d t, \hspace{5mm} j\in \N_0,
\end{align*}
where $\tilde v_k$ is defined in \eqref{v-tilde}. 
Furthermore we denote $m:=\frac{\beta+1}{\beta}$ and $A_j:=\Omega_T \cap \{ \tilde{v}_k>M_j\}$. Note that 
\begin{align*}
\nabla \big(\tilde{v}_k^\beta - M_{j+1}^\beta\big)_+ &= \chi_{\{\tilde{v}_k>M_{j+1}\}}\nabla \tilde{v}_k^\beta=\chi_{\{\tilde{v}_k>M_{j+1}\}}\nabla(\tilde{v}_k^\frac{\beta+1}{2})^\frac{2\beta}{\beta+1}
\\
 &=\tfrac{2\beta}{\beta+1}\chi_{\{\tilde{v}_k>M_{j+1}\}}(\tilde{v}_k^\frac{\beta+1}{2})^\frac{\beta-1}{\beta+1}\nabla \tilde{v}_k^\frac{\beta+1}{2}
= \tfrac{2\beta}{\beta+1}\tilde{v}_k^\frac{\beta-1}{2}\nabla \big(\tilde{v}_k^\frac{\beta+1}{2}-M_{j+1}^\frac{\beta+1}{2}\big)_+.
\end{align*} 
Thus,
\begin{align}\label{cleveland}
\big|\nabla \big(\tilde{v}_k^\beta - M_{j+1}^\beta\big)_+\big|&= \tfrac{2\beta}{\beta+1}\tilde{v}_k^\frac{\beta-1}{2}\big|\nabla \big(\tilde{v}_k^\frac{\beta+1}{2}-M_{j+1}^\frac{\beta+1}{2}\big)_+\big|
\\
\notag &\geq \tfrac{2\beta}{\beta+1}\big(\tilde{v}_k^\frac{\beta+1}{2}-M_{j+1}^\frac{\beta+1}{2}\big)_+^{\frac{2\beta}{\beta+1}-1}\big|\nabla \big(\tilde{v}_k^\frac{\beta+1}{2}-M_{j+1}^\frac{\beta+1}{2}\big)_+\big|
\\
\notag &=\big|\nabla \big(\tilde{v}_k^\frac{\beta+1}{2}-M_{j+1}^\frac{\beta+1}{2}\big)_+^\frac{2\beta}{\beta+1}\big|.
\end{align}
Using H\"older's inequality, Gagliardo Nirenberg's inequality from Lemma~\ref{lemma:Gagliardo}, \eqref{cleveland} and the energy estimate from Lemma~\ref{lem:energy} we infer that
\begin{align*}
&Y_{j+1} \leq \bigg[ \iint_{\Omega_T}\Big[\big(\tilde{v}_k^\frac{\beta+1}{2}-M^\frac{\beta+1}{2}_{j+1}\big)_+^\frac{2\beta }{\beta+1} \Big]^{p\frac{n+m}{n}}\d x\d t \bigg]^\frac{n}{n+m}|A_{j+1}|^\frac{m}{n+m}
\\
&\ \leq c \bigg[\sup_{\tau\in [0,T]}\int_\Omega \hspace{-1mm} \big(\tilde{v}_k^\frac{\beta+1}{2}-M^\frac{\beta+1}{2}_{j+1}\big)_+^2(\tau)\d x\bigg]^\frac{p}{n+m} \\
&\ \ \quad\cdot
 \bigg[ \iint_{\Omega_T} \Big|\nabla  \big(\tilde{v}_k^\frac{\beta+1}{2}-M^\frac{\beta+1}{2}_{j+1}\big)_+^\frac{2\beta }{\beta+1}  \Big|^p \d x \d t \bigg]^\frac{n}{n+m}|A_{j+1}|^\frac{m}{n+m}
\\
&\ = c \bigg[\sup_{\tau\in [0,T]}\int_\Omega \big(\tilde{v}_k^\frac{\beta+1}{2}-M^\frac{\beta+1}{2}_{j+1}\big)_+^2(\tau)\d x\bigg]^\frac{p}{n+m} \bigg[ \iint_{\Omega_T} \big|\nabla  \big(\tilde{v}_k^\beta-M^\beta_{j+1}\big)_+\big|^p \d x \d t \bigg]^\frac{n}{n+m}|A_{j+1}|^\frac{m}{n+m}
\\
&\ \leq c\bigg[ \iint_{A_{j+1}} \big[|\nabla z|^{\beta p}+ f^{p'} + M_{j+1}^{\beta p}\big]\d x\d t \bigg]^\frac{n+p}{n+m}|A_{j+1}|^\frac{m}{n+m}.
\end{align*} 
With the abbreviation $G:=|\nabla z|^{\beta p}+ f^{p'}$, we obtain
\begin{align}\label{Y-estim}
Y_{j+1} &\leq c\bigg[\iint_{A_{j+1}} G \d x\d t + M_{j+1}^{\beta p}|A_{j+1}|\bigg]^\frac{n+p}{n+m}|A_{j+1}|^\frac{m}{n+m}
\notag\\
&\leq c\Big[\norm{G}_{L^\sigma(\Omega_T)}|A_{j+1}|^{1-\frac{1}{\sigma}} + M_{j+1}^{\beta p}|A_{j+1}|\Big]^\frac{n+p}{n+m}|A_{j+1}|^\frac{m}{n+m}.
\end{align}
We can estimate the measure of $A_{j+1}$ by noting that
\begin{align*}
|A_{j+1}|&=M^{-\beta p} 2^{(j+1)\frac{2 \beta p}{\beta+1}}|A_{j+1}|\big(M^\frac{\beta+1}{2}_{j+1}-M^\frac{\beta+1}{2}_j\big)^\frac{2\beta p}{\beta + 1}
\\
&\leq M^{-\beta p} 2^{(j+1)\frac{2 \beta p}{\beta+1}}\iint_{A_{j+1}}\big(\tilde{v}_k^\frac{\beta+1}{2}-M^\frac{\beta+1}{2}_j\big)_+^\frac{2\beta p}{\beta + 1}\d x \d t
\\
&\leq M^{-\beta p} 2^{(j+1)\frac{2 \beta p}{\beta+1}}Y_j.
\end{align*}
We use this to estimate the second term in the square brackets of \eqref{Y-estim} and the last factor. The remaining instance of $|A_{j+1}|$ is treated in the same way except that we drop the factor containing $M$, which is possible since $M>1$. In this way we obtain
\begin{align*}
Y_{j+1} &\leq c\Big[\norm{G}_{L^\sigma}2^{j(1-\frac{1}{\sigma})\frac{2 \beta p}{\beta+1}}Y_j^{1-\frac{1}{\sigma}} +  2^{j\frac{2 \beta p}{\beta+1}}Y_j \Big]^\frac{n+p}{n+m}(M^{-\beta p} 2^{j\frac{2 \beta p}{\beta+1}}Y_j)^\frac{m}{n+m}
\\
&\leq c\, M^{-\frac{\beta pm}{n+m}} \Big[\norm{G}_{L^\sigma(\Omega_T)} +  Y_j^\frac{1}{\sigma} \Big]^\frac{n+p}{n+m}  2^{j\frac{2 \beta p}{\beta+1} \frac{n+m+p}{n+m}}Y_j^{(1-\frac{1}{\sigma})\frac{n+p}{n+m}+\frac{m}{n+m}}
\\
& \leq c\, M^{-\frac{\beta pm}{n+m}}\Big[\norm{G}_{L^\sigma(\Omega_T)} +  \norm{\tilde{v}_k}_{L^{\beta p}(\Omega_T)}^\frac{\beta p}{\sigma} \Big]^\frac{n+p}{n+m}  2^{j\frac{2 \beta p}{\beta+1} \frac{n+m+p}{n+m}}Y_j^{1 +\frac{1}{n+m}(p-\frac{1}{\sigma}(n+p))}.
\end{align*}
In view of Corollary \ref{cor:zanzara} this shows that
\begin{align*}
Y_{j+1} 
 \leq c\, M^{-\frac{\beta pm}{n+m}}\Big[\norm{G}_{L^\sigma(\Omega_T)} +  K^\frac{1}{\sigma} \Big]^\frac{n+p}{n+m}  2^{j\frac{2 \beta p}{\beta+1} \frac{n+m+p}{n+m}}Y_j^{1 +\frac{1}{n+m}(p-\frac{1}{\sigma}(n+p))},
\end{align*}
where $K$ is defined in \eqref{def:P}. 
Thus we have verified the iterative estimate of Lemma~\ref{fastconvg} with the choices
\begin{align*}
C:=c\, M^{-\frac{\beta pm}{n+m}} \Big[\norm{G}_{L^\sigma(\Omega_T)} +  K^\frac{1}{\sigma} \Big]^\frac{n+p}{n+m},\hspace{3mm}b:=2^{\frac{2 \beta p}{\beta+1} \frac{n+m+p}{n+m}}   ,\hspace{3mm}\delta:=\tfrac{1}{n+m}(p-\tfrac{1}{\sigma}(n+p)).
\end{align*}
Note that $\delta > 0$ since $\sigma >\frac{n+p}{p}$. In order to apply Lemma~\ref{fastconvg} we also need to have 
\begin{align*}
Y_0\leq C^{-\frac{1}{\delta}}b^{-\frac{1}{\delta^2}}.
\end{align*}
Since $Y_0\leq \norm{\tilde{v}_k}_{L^{\beta p}(\Omega_T)}^{\beta p}\le c\,K$, it is sufficient that 
\begin{align*}
c\, K\leq C^{-\frac{1}{\delta}}b^{-\frac{1}{\delta^2}},
\end{align*}
which, using the definitions of $C$ and $b$ is equivalent to 
\begin{align*}
M\geq \tilde{c}\,K^\frac{\delta(n+m)}{\beta p m} \Big[\norm{G}_{L^\sigma(\Omega_T)} +  K^\frac{1}{\sigma} \Big]^\frac{n+p}{\beta p m} b^{\frac{n+m}{\delta \beta pm}}.
\end{align*}
for a constant $\tilde{c}$ depending on $n, p, \beta, \Omega$. We now choose
\begin{align*}
M:= \max\bigg\{\sup_{\bar{\Omega}}\Psi+1, \tilde{c}\,K^\frac{\delta(n+m)}{\beta p m} \Big[\norm{G}_{L^\sigma(\Omega_T)} +  K^\frac{1}{\sigma} \Big]^\frac{n+p}{\beta p m} b^{\frac{n+m}{\delta \beta pm}} \bigg\}.
\end{align*}
Thus, $M$ is a constant depending only on $n, p, \beta, \Omega_T, f, z, \sigma$. Since $Y_j\to 0$ with this choice we have that 
\begin{align*}
\norm{T_k\circ v_k}_{L^\infty(\Omega_T)}=\norm{\tilde{v}_k}_{L^\infty(\Omega_T)}\leq 2M =: L,
\end{align*}
for all $k$. But this means that for $k>L$ we have
\begin{align*}
\norm{v_k}_{L^\infty(\Omega_T)}\leq L,
\end{align*}
which proves the claim.
\end{proof}

The previous lemma together with Lemma \ref{lem:lower-bound-vk} and the definition of $A_k$ show that 
\begin{align*}
A_k(v_k,\nabla v_k)=A\big(v_k,\nabla v_k^\beta\big),
\end{align*} 
for large $k$. Thus, for large $k$, we deduce from the differential equation \eqref{lampredotto} that $v_k$ satisfies also the equation 
\begin{align}\label{casumarzu}
\iint_{\Omega_T}\big[A\big(v_k,\nabla v_k^\beta\big)\cdot \nabla \varphi - v_k\partial_t\varphi\big] \d x\d t = \iint_{\Omega_T} \big[f\varphi + k^{-\alpha}|\nabla z|^{p-2}\nabla z\cdot \nabla \varphi\big] \d x \d t,
\end{align}
for any $\varphi\in C_0^\infty(\Omega_T)$. We now show a generalization of this result for test functions which do not necessarily vanish at time zero. This result will be used to verify that once we have concluded the existence of a solution to the original equation, it will also satisfy the correct boundary value.
\begin{lem}For sufficiently large $k$ and $\varphi\in C^\infty(\bar{\Omega}\times[0,T])$ with support in $K\times [0,\tau]$ where $K\subset \Omega$ is compact and $\tau\in (0,T)$ we have
\begin{align}\label{v_k_weak_form_with_init_val}
\iint_{\Omega_T}\big[A\big(v_k,\nabla v_k^\beta\big)\cdot \nabla \varphi - v_k\partial_t\varphi\big] \d x\d t &= \iint_{\Omega_T} \big[f\varphi + k^{-\alpha}|\nabla z|^{p-2}\nabla z\cdot \nabla \varphi\big] \d x \d t
\\
\notag &\quad + \int_\Omega v_k(0)\varphi(0)\d x.
\end{align}
\end{lem}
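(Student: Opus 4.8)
The plan is to return to the Steklov-averaged form \eqref{trippa} of the equation, which—unlike \eqref{casumarzu}—is available on time intervals $[a,b]$ with $a=0$, and to integrate the parabolic term by parts in time while \emph{retaining} the boundary contribution at $t=0$.

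First I would take $k$ larger than the threshold $L$ of Lemma~\ref{lem:v_k-unifly-bdd}, so that by Lemma~\ref{lem:lower-bound-vk} and Lemma~\ref{lem:v_k-unifly-bdd} one has $\tfrac1k\le v_k\le L$ a.e.\ in $\Omega_T$; in particular $A_k(v_k,\nabla v_k)=A(v_k,\nabla v_k^\beta)$ a.e., and the same identity holds for their Steklov means. Since the given $\varphi\in C^\infty(\bar{\Omega}\times[0,T])$ has spatial support in the compact set $K\subset\Omega$, we have $\varphi(\cdot,t)\in W^{1,p}_0(\Omega)$ for every $t$ and hence $\varphi\in L^p(0,T;W^{1,p}_0(\Omega))\cap L^\infty(\Omega_T)$, so $\varphi$ is admissible in \eqref{trippa}. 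I would apply \eqref{trippa} with $a=0$ and $b=T-h$, choosing $h$ so small that $T-h>\tau$; then $\varphi(\cdot,T-h)\equiv 0$ by the support assumption.

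Since $[v_k]_h$ is Lipschitz in $t$, Fubini's theorem together with integration by parts in time gives
\[
\int_0^{T-h}\!\!\int_\Omega\partial_t[v_k]_h\,\varphi\,\d x\d t
= -\int_\Omega [v_k]_h(\cdot,0)\,\varphi(\cdot,0)\,\d x
-\int_0^{T-h}\!\!\int_\Omega [v_k]_h\,\partial_t\varphi\,\d x\d t,
\]
the contribution at $t=T-h$ vanishing as just noted. Because $\varphi$, $\partial_t\varphi$ and $\nabla\varphi$ are supported in $K\times[0,\tau]$ with $\tau<T-h$, every integral over $[0,T-h]$ coincides with the corresponding integral over $\Omega_T$. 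Then I would pass to the limit $h\downarrow 0$: by the standard properties of Steklov means, $[v_k]_h\to v_k$, $[A(v_k,\nabla v_k^\beta)]_h\to A(v_k,\nabla v_k^\beta)$ and $[f]_h\to f$ in the relevant $L^q(\Omega_T)$-spaces (using that $v_k$ is bounded, $\nabla v_k^\beta\in L^p(\Omega_T)$ and $\nabla z\in L^p(\Omega)$, whence $A(v_k,\nabla v_k^\beta)\in L^{p'}(\Omega_T)$, and $f\in L^{\sigma p'}(\Omega_T)$), and these convergences pair against the bounded functions $\varphi$, $\partial_t\varphi$, $\nabla\varphi$. Inserting the displayed identity into \eqref{trippa}, passing to the limit and rearranging the resulting equality produces exactly \eqref{v_k_weak_form_with_init_val}.

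The only delicate point is the passage to the limit in the boundary term: one needs $[v_k]_h(\cdot,0)=\tfrac1h\int_0^h v_k(\cdot,s)\,\d s\to v_k(\cdot,0)$ in $L^2(\Omega)$, which is precisely where the time continuity $v_k\in C([0,T];L^2(\Omega))$ built into Definition~\ref{approxdef} is used; combined with $\varphi(\cdot,0)\in L^2(\Omega)$ this yields $\int_\Omega[v_k]_h(\cdot,0)\,\varphi(\cdot,0)\,\d x\to\int_\Omega v_k(\cdot,0)\,\varphi(\cdot,0)\,\d x$. Everything else is routine manipulation of Steklov averages.
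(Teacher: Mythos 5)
Your argument is correct, but it takes a genuinely different route from the paper's. The paper starts from the already-established identity \eqref{casumarzu}, which holds for test functions in $C^\infty_0(\Omega_T)$; it multiplies $\varphi$ by the piecewise-linear cut-off $\zeta_\varepsilon(t)=\min\{t/\varepsilon,1\}$ (which vanishes at $t=0$, so the product is an admissible test function after a density argument) and lets $\varepsilon\downarrow 0$: the term $\iint v_k\,\zeta_\varepsilon'\,\varphi\,\d x\d t=\tfrac1\varepsilon\int_0^\varepsilon\int_\Omega v_k\varphi\,\d x\d t$ then produces $\int_\Omega v_k(0)\varphi(0)\d x$ by the time continuity $v_k\in C([0,T];L^2(\Omega))$. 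You instead return to the Steklov-averaged formulation \eqref{trippa} with $a=0$, $b=T-h$, plug in $\varphi$ directly (admissible there since $\varphi\in L^p(0,T;W^{1,p}_0(\Omega))\cap L^\infty(\Omega_T)$), integrate the parabolic term by parts in time retaining the $t=0$ boundary term, and pass $h\downarrow 0$; the continuity enters through $[v_k]_h(\cdot,0)=\tfrac1h\int_0^h v_k\,\d s\to v_k(\cdot,0)$ in $L^2(\Omega)$. Both routes are legitimate and both hinge on $v_k\in C([0,T];L^2(\Omega))$. The paper's proof is shorter because it reuses \eqref{casumarzu} as a black box; your proof is slightly more self-contained, avoiding the need to justify that the merely Lipschitz-in-time function $\zeta_\varepsilon\varphi$ is admissible in \eqref{casumarzu}, at the cost of redoing the $h\downarrow 0$ limit that was already performed to obtain \eqref{casumarzu}.
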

\begin{proof}[Proof]
For $\varphi$ as in the statement of the lemma, we apply \eqref{casumarzu} with the test function $\zeta_\varepsilon \varphi$, where
\begin{align*}
\zeta_\varepsilon(t)=
\begin{cases}
\frac1\varepsilon t, &t\in [0,\varepsilon],
\\
1, &t>\varepsilon,
\end{cases}
\end{align*}
and pass to the limit $\varepsilon\downarrow 0$. This is possible since $v_k \in C([0,T];L^2(\Omega))$.
\end{proof}

Similarly as in Lemma~\ref{lem:weakform_2} we can show that the weak solution $v_k$ also satisfies the following modified weak form.
\begin{lem}\label{lem:weakform_2_mol}
Let $k>1$ and $v_k$ be an admissible weak solution to the Cauchy-Dirichlet problem \eqref{approxprob} in the sense of Definition~\ref{approxdef}. Then, there holds 
\begin{align}\label{weakform_2_k}
	\iint_{\Omega_T} \zeta' \b[v_k,w] \d x\d t 
	&=
	\iint_{\Omega_T} 
	\zeta\Big[\partial_t w^\beta(v_k-w) +
	A(v_k,\nabla v_k^\beta)\cdot \big(\nabla v_k^\beta-\nabla w^\beta\big) \Big] \d x\d t \nonumber\\
	&\quad - 
	\iint_{\Omega_T} \zeta \Big[
	f(v_k^\beta-w^\beta) + k^{-\alpha} |\nabla z|^{p-2}\nabla z \cdot 
	\big(\nabla v_k^\beta-\nabla w^\beta\big)\Big] \d x\d t ,
\end{align}
for all $w^\beta\in k^{-\beta} + L^p(0,T;W_0^{1,p}(\Omega))$ with $\partial_t w^\beta\in L^{\frac{\beta+1}{\beta}}(\Omega_T)$ and any $\zeta\in W^{1,\infty}([0,T],\R_{\ge 0})$ with $\zeta(0)=0=\zeta(T)$.
\end{lem}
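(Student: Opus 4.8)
The proof follows the same pattern as that of Lemma~\ref{lem:weakform_2}, the only new feature being the extra term $k^{-\alpha}|\nabla z|^{p-2}\nabla z\cdot\nabla\varphi$ appearing on the right-hand side of the weak form. We may assume $k$ large enough that Lemma~\ref{lem:v_k-unifly-bdd} applies; then $v_k$ is bounded, so $v_k^\beta-k^{-\beta}\in L^p(0,T;W^{1,p}_0(\Omega))$ and $v_k$ satisfies \eqref{casumarzu}. The plan is to insert into \eqref{casumarzu} the test function $\varphi_h:=\zeta(w^\beta-\Ex{v_k^\beta})$. This is admissible because it can be approximated by functions $\varphi_{h,j}\in C^\infty_0(\Omega_T)$ with $\varphi_{h,j}\to\varphi_h$ in $L^p(0,T;W^{1,p}(\Omega))$ and $\partial_t\varphi_{h,j}\to\partial_t\varphi_h$ in $L^{\frac{\beta+1}{\beta}}(\Omega_T)$, using $w^\beta-k^{-\beta},\,v_k^\beta-k^{-\beta}\in L^p(0,T;W^{1,p}_0(\Omega))$, the assumption $\partial_t w^\beta\in L^{\frac{\beta+1}{\beta}}(\Omega_T)$ and Lemma~\ref{expmolproperties}(ii).

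First I would pass to the limit $h\downarrow 0$ in the three ``elliptic-type'' contributions. Exactly as in \eqref{simple_h_lims} and \eqref{simple_h_lims_f}, the term containing $A(v_k,\nabla v_k^\beta)$ converges to $\iint_{\Omega_T}\zeta\,A(v_k,\nabla v_k^\beta)\cdot(\nabla w^\beta-\nabla v_k^\beta)\d x\d t$, the term containing $f$ to $\iint_{\Omega_T}\zeta f(w^\beta-v_k^\beta)\d x\d t$, and the new term to $k^{-\alpha}\iint_{\Omega_T}\zeta\,|\nabla z|^{p-2}\nabla z\cdot(\nabla w^\beta-\nabla v_k^\beta)\d x\d t$, all by Lemma~\ref{expmolproperties}(i) and (iii). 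The extra $\nabla z$-term is thus handled word for word like the term involving $f$ and poses no new difficulty.

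The one delicate point, just as in Lemma~\ref{lem:weakform_2}, is the parabolic term $\iint_{\Omega_T}v_k\,\partial_t\varphi_h\d x\d t$. Here I would expand $\partial_t\varphi_h=\zeta'(w^\beta-\Ex{v_k^\beta})+\zeta(\partial_t w^\beta-\partial_t\Ex{v_k^\beta})$, insert $\partial_t\Ex{v_k^\beta}=\tfrac1h(v_k^\beta-\Ex{v_k^\beta})$ from Lemma~\ref{expmolproperties}(ii), integrate the term $\zeta\,\Ex{v_k^\beta}^{1/\beta}\partial_t\Ex{v_k^\beta}$ by parts in time, and discard the leftover term $\zeta(v_k-\Ex{v_k^\beta}^{1/\beta})\partial_t\Ex{v_k^\beta}=\tfrac{\zeta}{h}(v_k-\Ex{v_k^\beta}^{1/\beta})(v_k^\beta-\Ex{v_k^\beta})\ge 0$, which is nonnegative since $s\mapsto s^{1/\beta}$ is nondecreasing and $\zeta\ge 0$. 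Dropping this term with the correct sign and integrating by parts once more (as in \eqref{parab_term_h_lim}) produces the $\b[v_k,w]$ term and yields ``$\le$'' in \eqref{weakform_2_k}. The reverse inequality ``$\ge$'' is then obtained in exactly the same way, now testing with $\varphi=\zeta(w^\beta-\Exx{v_k^\beta})$ and using $\partial_t\Exx{v_k^\beta}=\tfrac1h(\Exx{v_k^\beta}-v_k^\beta)$, which flips the relevant sign. Combining the two inequalities gives the asserted identity. I expect the only genuine work to be this bookkeeping for the time term together with the verification that $\varphi_h$ is an admissible test function; the additional topography term requires no new idea.
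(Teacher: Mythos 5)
Your overall strategy matches what the paper intends (the paper itself gives no proof beyond ``Similarly as in Lemma~\ref{lem:weakform_2}''), your observation that the extra $k^{-\alpha}|\nabla z|^{p-2}\nabla z$-term is handled exactly like the $f$-term is correct, and your tacit restriction to $k$ large enough that Lemmas~\ref{lem:lower-bound-vk} and~\ref{lem:v_k-unifly-bdd} yield $A_k(v_k,\nabla v_k)=A(v_k,\nabla v_k^\beta)$ is necessary. However, there is a genuine gap in the admissibility of your test function. You assert that $\varphi_h=\zeta(w^\beta-\Ex{v_k^\beta})$ can be approximated in $L^p(0,T;W^{1,p}(\Omega))$ by elements of $C^\infty_0(\Omega_T)$, which requires $\varphi_h\in L^p(0,T;W^{1,p}_0(\Omega))$. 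This fails here: for a.e.~$t$ one has $v_k^\beta-k^{-\beta}\in W^{1,p}_0(\Omega)$, so the trace of $\Ex{v_k^\beta}$ on $\partial\Omega$ is $\frac1h\int_0^te^{(s-t)/h}k^{-\beta}\,\d s=k^{-\beta}(1-e^{-t/h})$, and the trace of $w^\beta-\Ex{v_k^\beta}$ is $k^{-\beta}e^{-t/h}\neq 0$. The facts you cite, $w^\beta-k^{-\beta},\,v_k^\beta-k^{-\beta}\in L^p(0,T;W^{1,p}_0(\Omega))$, do not give $w^\beta-\Ex{v_k^\beta}\in L^p(0,T;W^{1,p}_0(\Omega))$ because $\Ex{\cdot}$ does not commute with subtracting the constant $k^{-\beta}$ (indeed $\Ex{k^{-\beta}}=k^{-\beta}(1-e^{-t/h})$). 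This was invisible in Lemma~\ref{lem:weakform_2}, where $w^\beta$ and $v^\beta$ both vanish on $\partial\Omega$, but the lateral datum $\tfrac1k$ breaks it.

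The fix is to mollify the shifted function: test with $\varphi_h=\zeta\big(w^\beta-k^{-\beta}-\Ex{v_k^\beta-k^{-\beta}}\big)$, which lies in $L^p(0,T;W^{1,p}_0(\Omega))$ by Lemma~\ref{expmolproperties}\,(iii), and use $\varphi_h=\zeta\big(w^\beta-k^{-\beta}-\Exx{v_k^\beta-k^{-\beta}}\big)$ for the reverse inequality. Writing $u_h:=k^{-\beta}+\Ex{v_k^\beta-k^{-\beta}}$, Lemma~\ref{expmolproperties}\,(ii) gives $\partial_t u_h=\tfrac1h(v_k^\beta-u_h)$, and since $v_k\ge\tfrac1k$ we have $u_h\ge k^{-\beta}>0$, so the sign argument $\zeta\,(v_k-u_h^{1/\beta})(v_k^\beta-u_h)\ge 0$ goes through exactly as in your write-up, and the rest of the computation is unchanged. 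Your $\varphi_h$ differs from the corrected one only by the space-constant $\zeta k^{-\beta}e^{-t/h}$, whose eventual contribution vanishes as $h\downarrow 0$ because $\zeta(0)=0$; nonetheless, inserting the uncorrected $\varphi_h$ into \eqref{casumarzu} is not legitimate, so the admissibility step as written is false and needs the above correction.
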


The next step is a uniform $L^p$-bound for the gradients of the functions $v_k^\beta$.
\begin{lem}\label{lem:grad_of_vk_unifly_bdd} 
Let $k>1$ and $v_k$ be an admissible weak solution to the Cauchy-Dirichlet problem \eqref{approxprob} in the sense of Definition~\ref{approxdef}. Then, there is a constant $C>0$ depending on $n,\beta, p, \Omega_T, f, \psi$, and $z$ such that 
\begin{align*}
\iint_{\Omega_T}|\nabla v_k^\beta|^p\d x \d t \leq C,
\end{align*}
for all $k\in \N$.
\end{lem}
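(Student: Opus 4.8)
The plan is to test a suitable weak form of the equation for $v_k$ against (essentially) $v_k^\beta - k^{-\beta}$ and read off an energy identity from which the gradient norm is controlled. We may assume $k>L$ with $L$ the constant from Lemma~\ref{lem:v_k-unifly-bdd}, so that $v_k=\tilde v_k\le L$ a.e.\ in $\Omega_T$ and $A_k(v_k,\nabla v_k)=A(v_k,\nabla v_k^\beta)$; in particular $v_k^\beta\in L^p(0,T;W^{1,p}(\Omega))$. (For the finitely many integers $1<k\le L$ one has $v_k\in L^\infty(\Omega_T)$ as well, hence $\iint_{\Omega_T}|\nabla v_k^\beta|^p\,\d x\,\d t<\infty$, and these values are absorbed into the final constant.) Concretely, I would insert into the modified weak form \eqref{weakform_2_k} of Lemma~\ref{lem:weakform_2_mol} the constant function $w\equiv\tfrac1k$ — so that $w^\beta=k^{-\beta}$, $\nabla w^\beta=0$ and $\partial_t w^\beta=0$ — together with the piecewise linear time cutoff $\zeta=\zeta_\varepsilon$ which equals $1$ on $[\varepsilon,T-\varepsilon]$ and vanishes at $t=0,T$. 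Letting $\varepsilon\downarrow 0$ and using $v_k\in C([0,T];L^2(\Omega))$ and $v_k\le L$ to identify the limit of $\iint_{\Omega_T}\zeta_\varepsilon'\,\b[v_k,\tfrac1k]\,\d x\,\d t$, one arrives at
\begin{align*}
\iint_{\Omega_T}A(v_k,\nabla v_k^\beta)\cdot\nabla v_k^\beta\,\d x\,\d t
&=\int_\Omega\b[v_k(0),\tfrac1k]\,\d x-\int_\Omega\b[v_k(T),\tfrac1k]\,\d x\\
&\quad+\iint_{\Omega_T}\Big[f\big(v_k^\beta-k^{-\beta}\big)+k^{-\alpha}|\nabla z|^{p-2}\nabla z\cdot\nabla v_k^\beta\Big]\,\d x\,\d t.
\end{align*}
Since $\b\ge0$ and $f\ge0$, dropping $-\int_\Omega\b[v_k(T),\tfrac1k]\,\d x$ and replacing $v_k^\beta-k^{-\beta}$ by $v_k^\beta$ turns this into an upper bound for $\iint_{\Omega_T}A(v_k,\nabla v_k^\beta)\cdot\nabla v_k^\beta\,\d x\,\d t$.

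Next I would bound the left-hand side from below. Since $\nabla v_k^\beta+\beta v_k^{\beta-1}\nabla z=\beta v_k^{\beta-1}(\nabla v_k+\nabla z)$, Lemma~\ref{lem:V} with $\xi=\nabla v_k^\beta$ and $\eta=\beta v_k^{\beta-1}\nabla z$ gives
\begin{align*}
A(v_k,\nabla v_k^\beta)\cdot\nabla v_k^\beta
&=\beta^{1-p}\big|\nabla v_k^\beta+\beta v_k^{\beta-1}\nabla z\big|^{p-2}\big(\nabla v_k^\beta+\beta v_k^{\beta-1}\nabla z\big)\cdot\nabla v_k^\beta\\
&\ge c_1|\nabla v_k^\beta|^p-c_2\,v_k^{p(\beta-1)}|\nabla z|^p,
\end{align*}
with $c_1,c_2$ depending only on $\beta,p$. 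It then remains to estimate the four remaining integrals uniformly in $k$: by Lemma~\ref{estimates:boundary_terms}(iii), $v_k(0)=\tfrac1k+\Psi$, $k>1$ and $\Psi\in L^\infty(\Omega)$ one gets $\int_\Omega\b[v_k(0),\tfrac1k]\,\d x\le c(\beta)\big(1+\|\Psi\|_{L^\infty(\Omega)}\big)^{\beta+1}|\Omega|$; the bound $v_k\le L$ and $\nabla z\in L^p(\Omega)$ (which holds because $\nabla z\in L^{\sigma\beta p}(\Omega)$ with $\sigma\beta p\ge p$ and $\Omega$ bounded) give $\iint_{\Omega_T}v_k^{p(\beta-1)}|\nabla z|^p\,\d x\,\d t\le c(\beta,p,L)\,T\|\nabla z\|_{L^p(\Omega)}^p$ and $\iint_{\Omega_T}f v_k^\beta\,\d x\,\d t\le L^\beta\|f\|_{L^1(\Omega_T)}$ (note $f\in L^{\sigma p'}(\Omega_T)\subset L^1(\Omega_T)$); and since $k^{-\alpha}\le1$, Young's inequality yields $\iint_{\Omega_T}k^{-\alpha}|\nabla z|^{p-2}\nabla z\cdot\nabla v_k^\beta\,\d x\,\d t\le\tfrac{c_1}{2}\iint_{\Omega_T}|\nabla v_k^\beta|^p\,\d x\,\d t+c\iint_{\Omega_T}|\nabla z|^p\,\d x\,\d t$. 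Absorbing $\tfrac{c_1}{2}\iint_{\Omega_T}|\nabla v_k^\beta|^p$ into the left-hand side then gives $\iint_{\Omega_T}|\nabla v_k^\beta|^p\,\d x\,\d t\le C$ with $C$ depending only on $n,\beta,p,\Omega_T,f,\psi,z$ (through $L$, $\|\Psi\|_{L^\infty}$, $|\Omega|$, $T$, $\|\nabla z\|_{L^p(\Omega)}$ and $\|f\|_{L^1(\Omega_T)}$).

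The one genuinely delicate point in this scheme is the passage $\varepsilon\downarrow 0$ in the boundary term, i.e.\ establishing that $\tfrac1\varepsilon\int_0^\varepsilon\!\int_\Omega\b[v_k,\tfrac1k]\,\d x\,\d t\to\int_\Omega\b[v_k(0),\tfrac1k]\,\d x$ and similarly at $t=T$; this follows because $t\mapsto v_k(\cdot,t)$ is continuous into $L^2(\Omega)$ and bounded by $L$, hence continuous into every $L^q(\Omega)$ with $q<\infty$, and $s\mapsto\b[s,\tfrac1k]$ is locally Lipschitz, so $t\mapsto\int_\Omega\b[v_k,\tfrac1k](\cdot,t)\,\d x$ is continuous on $[0,T]$ by dominated convergence. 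Everything else is routine; the essential structural input is the uniform $L^\infty$-bound on $v_k$ from Lemma~\ref{lem:v_k-unifly-bdd}, which is precisely what renders the lower-order terms $v_k^{p(\beta-1)}|\nabla z|^p$ and $f\,v_k^\beta$ controllable independently of $k$.
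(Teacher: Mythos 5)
Your proof is essentially the paper's proof: you test the modified weak form \eqref{weakform_2_k} with the constant comparison function $w\equiv\frac1k$ and a piecewise-linear time cutoff, bound the coercivity term from below via Lemma~\ref{lem:V}, control the lower-order terms using the uniform $L^\infty$-bound from Lemma~\ref{lem:v_k-unifly-bdd}, and absorb the $\nabla z$-term by Young's inequality. One small inaccuracy is your parenthetical treatment of $1<k\le L$: the proof of Lemma~\ref{lem:v_k-unifly-bdd} only gives $\|\tilde v_k\|_{L^\infty}\le L$ with $\tilde v_k=\min\{v_k,k\}$, so for $k\le L$ this says nothing about $v_k$ itself and does not yield $v_k\in L^\infty(\Omega_T)$; however, since only the regime $k>L$ is used in the ensuing limit passage (and the paper, too, implicitly restricts to large $k$ when it identifies $A_k(v_k,\nabla v_k)$ with $A(v_k,\nabla v_k^\beta)$), this is harmless.
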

\begin{proof}[Proof]
For $\delta\in(0,\frac{T}{2})$ we define
\begin{align*}
\zeta_\delta(t):= \begin{cases}\frac1\delta t, \quad &t\in [0,\delta],
\\ 
1, &t\in [\delta,T-\delta],
\\
\frac1\delta(T-t), &t\in [T-\delta,T].
\end{cases}
\end{align*}
and choose $\zeta=\zeta_\delta$ and the comparison function $w=\frac{1}{k}$ in the modified weak form \eqref{weakform_2_k} of the differential equation. Since $\partial_t w=0$ the first term on the right-hand side is zero. Our goal now is to pass to the limit $\delta \downarrow 0$. 
Note that $\nabla w = 0$ and thus
\begin{align*}
\iint_{\Omega_T} \zeta_\delta A(v_k,\nabla v_k^\beta)\cdot \big(\nabla v_k^\beta-\nabla w^\beta\big) \d x \d t 
&\xrightarrow[\delta \downarrow 0]{} \iint_{\Omega_T} A\big(v_k,\nabla v_k^\beta\big)\cdot \nabla v_k^\beta \d x \d t.
\end{align*}
Using the definition of the vector field $A$ in \eqref{def:A}, Lemma~\ref{lem:V} and Lemma \ref{lem:v_k-unifly-bdd} we obtain
\begin{align*}
A\big(v_k,\nabla v_k^\beta\big)\cdot \nabla v_k^\beta 
&\geq 
\beta^{1-p}\big[2^{-p} |\nabla v_k^\beta|^p -2^p \beta^p v_k^{p(\beta-1)}|\nabla z|^p\big]
\\
&\geq 2^{-p}\beta^{1-p} |\nabla v_k^\beta|^p -2^p\beta L^{p(\beta-1)}|\nabla z|^p.
\end{align*}
For the term on the left-hand side we have 
\begin{align*}
	\iint_{\Omega_T} \zeta_\delta' \b[v_k,w] \d x\d t 
	&=
	\frac{1}{\delta} \int_0^\delta\int_{\Omega} \b[v_k,\tfrac{1}{k}] \d x\d t -
	\frac{1}{\delta} \int_{T-\delta}^T\int_{\Omega} \b[v_k,\tfrac{1}{k}] \d x\d t \\
	&\le
	\frac{1}{\delta} \int_0^\delta\int_{\Omega} \b[v_k,\tfrac{1}{k}] \d x\d t \\
	&\xrightarrow[\delta \downarrow 0]{}
	\int_{\Omega} \b[\tfrac1k+\Psi,\tfrac{1}{k}] \d x\d t \\
	&\le
	\frac{1}{\beta+1}\int_\Omega (\Psi+1)^{\beta+1}\d x .
\end{align*}
We were able to omit the integral over $[T-\delta,T]$ since $\b[v,\tfrac{1}{k}]$ is always nonnegative. Passing to the limits in the remaining terms on the right-hand side presents no problems, and taking into account the previous estimates we end up with
\begin{align*}
\iint_{\Omega_T}|\nabla v_k^\beta |^p\d x \d t &\leq c\,T L^{p(\beta-1)}\int_{\Omega}|\nabla z|^p\d x + c \int_\Omega (\Psi+1)^{\beta+1}\d x
\\
&\quad + c\,L^\beta\iint_{\Omega_T} |f| \d x\d t + c\iint_{\Omega_T}|\nabla z|^{p-1} |\nabla v_k^\beta|\d x \d t,
\end{align*}
where $c=c(p,\beta)$. 
Using Young's inequality in the last integral, we finally obtain
\begin{align*}
\iint_{\Omega_T}|\nabla v_k^\beta |^p\d x \d t 
&\leq 
c\bigg[\int_\Omega \big[L^{p(\beta-1)}|\nabla z|^p + (\Psi+1)^{\beta+1}\big]\d x +
L^\beta \iint_{\Omega_T}|f| \d x \d t \bigg],
\end{align*}
for a constant $c$ depending on $\beta$ and $p$. 
\end{proof}

\section{Proof of the main result}\label{sec:existence}

Lemma~\ref{lem:v_k-unifly-bdd} and Lemma~\ref{lem:grad_of_vk_unifly_bdd} show that $(v_k^\beta)$ is a bounded sequence in the reflexive Banach space $L^p(0,T;W^{1,p}(\Omega))$. Therefore there is a subsequence converging weakly to an element of $L^p(0,T;W^{1,p}(\Omega))$. This element can be regarded as a nonnegative function on $\Omega_T$ since every $v_k^\beta$ is nonnegative, and hence we can write the limit as $v^\beta$ for some nonnegative function $v$. (Mazur's lemma provides us with a subsequence of nonnegative functions converging in $L^p(\Omega_T)$ and from this we obtain yet another subsequence converging pointwise a.e. in $\Omega_T$).
We even have a stronger form of convergence.

\begin{lem}\label{lem:strong}
Let $(v_k)$ be the weak solutions to the Cauchy-Dirichlet problems~\eqref{approxprob} in the sense of Definition~\ref{approxdef}. Then, there exists a subsequence $(k_j)_{j\in\N}$ with $k_j\to\infty$ as $j\to\infty$ and a nonnegative function $v\in L^{\infty}(\Omega_T)$ with $v^\beta\in L^p(0,T;W^{1,p}_0(\Omega))$ such that 
\begin{equation*}
	\left\{
	\begin{array}{ll}
	v_{k_j}^\beta\wto v^\beta &\quad
	\mbox{weakly in $L^p(0,T;W^{1,p}(\Omega))$,} \\[5pt]
	v_{k_j}\to v &\quad
	\mbox{strongly in $L^{q}(\Omega_T)$ for any $q\ge 1$ and a.e.~in $\Omega_T$.}
	\end{array}
	\right.
\end{equation*}
\end{lem}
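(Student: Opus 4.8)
The plan is to combine the uniform estimates already at hand with the abstract compactness result Corollary~\ref{thm:JS-th6m}, applied with $m=\beta$ and $F:=\{v_k\}$ (restricted to those $k$ large enough that Lemma~\ref{lem:v_k-unifly-bdd} applies), so that $F^m=\{v_k^\beta\}$. First I would collect the hypotheses. By Lemma~\ref{lem:v_k-unifly-bdd} the functions $v_k$ are bounded in $L^\infty(\Omega_T)$ by the constant $L$; hence, with $\theta:=\beta p$ (note $\beta p>1$), the family $F$ is bounded in $L^\theta(0,T;L^{p'}(\Omega))$. By Lemma~\ref{lem:grad_of_vk_unifly_bdd}, together with the $L^\infty$-bound, the family $F^\beta=\{v_k^\beta\}$ is bounded in $L^p(0,T;W^{1,p}(\Omega))$. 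Reflexivity and the discussion preceding the statement already produce a subsequence with $v_{k_j}^\beta\wto v^\beta$ weakly in $L^p(0,T;W^{1,p}(\Omega))$ for a nonnegative $v$; since each $v_k^\beta-k^{-\beta}$ lies in the closed, hence weakly closed, subspace $L^p(0,T;W^{1,p}_0(\Omega))$ and $k^{-\beta}\to0$, the weak limit satisfies $v^\beta\in L^p(0,T;W^{1,p}_0(\Omega))$, and $0\le v\le L$ a.e.\ gives $v\in L^\infty(\Omega_T)$.

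The remaining hypothesis of Corollary~\ref{thm:JS-th6m} is the uniform smallness of the time-translates $\tau_h v_k-v_k$, which by the Remark following that corollary is implied by a uniform bound on $\partial_t v_k$ in $L^1(0,T;Y)$ for some Banach space $Y\supset(W^{1,p}_0(\Omega))'$. To obtain it I would use~\eqref{casumarzu}: for $k$ large it says, in $\mathcal{D}'(\Omega_T)$, that
\begin{align*}
	\partial_t v_k=\nabla\cdot A\big(v_k,\nabla v_k^\beta\big)+f-k^{-\alpha}\nabla\cdot\big(|\nabla z|^{p-2}\nabla z\big).
\end{align*}
From the explicit form of $A$ in~\eqref{def:A} one has the pointwise bound $|A(v_k,\nabla v_k^\beta)|\le c\big(|\nabla v_k^\beta|^{p-1}+L^{(\beta-1)(p-1)}|\nabla z|^{p-1}\big)$, so the $L^\infty$-bound on $v_k$, the uniform $L^p$-bound on $\nabla v_k^\beta$ from Lemma~\ref{lem:grad_of_vk_unifly_bdd}, and $\nabla z\in L^p(\Omega)$ (which follows from $z\in W^{1,\sigma\beta p}(\Omega)$ since $\sigma\beta p>p$) show that $A(v_k,\nabla v_k^\beta)$ is bounded in $L^{p'}(\Omega_T;\R^n)$; similarly $k^{-\alpha}|\nabla z|^{p-2}\nabla z$ is bounded in $L^{p'}(\Omega_T;\R^n)$ and $f\in L^{p'}(\Omega_T)$ by~\eqref{assumption:data}. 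Testing against functions in $W^{1,p}_0(\Omega)$ then yields a uniform bound for $\partial_t v_k$ in $L^{p'}\big(0,T;W^{-1,p'}(\Omega)\big)$, hence in $L^1(0,T;Y)$ with $Y:=W^{-1,p'}(\Omega)=(W^{1,p}_0(\Omega))'$. Consequently Corollary~\ref{thm:JS-th6m}\,(ii) applies (with $q=\mu=p$, say), so after passing to a further subsequence $v_{k_j}^\beta\to w$ strongly in $L^{\tilde p}(0,T;L^p(\Omega))$ for some $\tilde p\in[1,p)$, and in particular in $L^1(\Omega_T)$.

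Finally I would identify and upgrade the limit. Since $v_{k_j}^\beta$ converges to $v^\beta$ weakly in $L^p(0,T;W^{1,p}(\Omega))$ and to $w$ strongly in $L^1(\Omega_T)$, both limits coincide with the distributional limit, so $w=v^\beta$ a.e.\ in $\Omega_T$; passing to a further subsequence we may assume $v_{k_j}^\beta\to v^\beta$ pointwise a.e.\ in $\Omega_T$. The continuity of $s\mapsto s^{1/\beta}$ on $[0,\infty)$ then gives $v_{k_j}=(v_{k_j}^\beta)^{1/\beta}\to v$ a.e.\ in $\Omega_T$, and since $0\le v_{k_j}\le L$ a.e.\ the dominated convergence theorem upgrades this to $v_{k_j}\to v$ in $L^q(\Omega_T)$ for every $q\in[1,\infty)$. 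This establishes all the assertions. The one genuinely delicate point is the verification of the hypotheses of Corollary~\ref{thm:JS-th6m}, i.e.\ producing the uniform $L^1(0,T;Y)$-bound on $\partial_t v_k$ from the equation and choosing the parameters $\theta,\mu,q$ compatibly; the identification and the upgrade of the convergence afterwards are routine.
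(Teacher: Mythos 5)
Your proposal is correct and proceeds by the same main mechanism as the paper: uniform $L^\infty$ and gradient bounds for $v_k$ resp.\ $v_k^\beta$, an application of Corollary~\ref{thm:JS-th6m} with $m=\beta$, $q=\mu=p$, $Y=(W^{1,p}_0(\Omega))'$, followed by identification of the weak and strong limits and the dominated-convergence upgrade. The only difference is in how the time-translate hypothesis of the compactness corollary is verified. The paper tests the equation against $\zeta_\delta\varphi$ for $\varphi\in W^{1,p}_0(\Omega)$, integrates in time over $[\tau,\tau+h]$ and uses H\"older to get the pointwise-in-$\tau$ estimate $\|v_k(\tau)-v_k(\tau+h)\|_{(W^{1,p}_0(\Omega))'}\le c\,h^{1/p}$ uniformly in $k$, which allows applying part~(i) and yields strong convergence of $v_{k_j}^\beta$ in $L^p(\Omega_T)$. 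You instead read off from \eqref{casumarzu} a uniform bound on $\partial_t v_k$ in $L^{p'}(0,T;W^{-1,p'}(\Omega))\subset L^1(0,T;Y)$ and invoke the Remark after the corollary to apply part~(ii), yielding strong convergence in $L^{\tilde p}(0,T;L^p(\Omega))$ for $\tilde p<p$. Both routes rest on exactly the same estimate for the spatial terms of the equation ($A(v_k,\nabla v_k^\beta)$, $f$, $k^{-\alpha}|\nabla z|^{p-2}\nabla z$ all uniformly in $L^{p'}$); the paper's direct estimate is slightly sharper (giving part~(i) and $L^p$-compactness), whereas your time-derivative bound is a hair quicker to state and is entirely sufficient since the downstream argument only needs $L^1$-strong convergence to identify the limit and then lifts to $L^q$ via pointwise a.e.\ convergence and boundedness. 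The identification $w=v^\beta$, the passage to the pointwise limit, and the $W^{1,p}_0$-membership of $v^\beta$ from the weak closure of $L^p(0,T;W^{1,p}_0(\Omega))$ all match the paper.
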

\begin{proof}[Proof]
From Lemma~\ref{lem:grad_of_vk_unifly_bdd} we know that $(v_k^\beta-k^{-\beta})$ is a bounded sequence in the reflexive Banach space $L^p(0,T;W_0^{1,p}(\Omega))$. Therefore there is a subsequence $(k_j)_{j\in\N}$ with $k_j\to\infty$ as $j\to\infty$ and a function $w\in L^p(0,T;W^{1,p}_0(\Omega))$ such that $v_{k_j}^\beta-k^{-\beta}\wto w$ weakly in $L^p(0,T;W^{1,p}(\Omega))$. This implies that also 
\begin{equation}\label{weak-vkj}
	v_{k_j}^\beta\wto w\quad\mbox{weakly in $L^p(0,T;W^{1,p}(\Omega))$.}
\end{equation}
Our next aim is to ensure strong convergence of $(v_{k_j})$ and thereby to identify the limit function as the pointwise a.e.~limit of the subsequence. 
For this purpose we let $\tau\in (0,T)$. For $h\in(0,T-\tau)$ and $\delta\in(0,\min\{\tau,T-\tau-h\})$ we define  
\begin{align*}
\zeta_\delta(t):=\begin{cases}
0, &t<\tau-\delta \\
\frac1\delta(t-\tau+\delta), &t\in [\tau-\delta,\tau] \\
1, &t \in (\tau,\tau+h) \\
\frac1\delta(\tau+h+\delta-t), &t\in [\tau+h,\tau+h+\delta] \\
0, &t> \tau+h+\delta
\end{cases}
\end{align*}
and consider $\varphi \in C^\infty_0(\Omega)$. 
For $k>1$ we use the weak formulation \eqref{casumarzu} with the test function $\zeta_\delta \varphi$ and obtain
\begin{align*}
\frac1\delta\int^\tau_{\tau-\delta} &\int_\Omega v_k \varphi\d x \d t - \frac1\delta\int^{\tau+h+\delta}_{\tau+h}\int_\Omega v_k \varphi \d x \d t 
\\
&= \iint_{\Omega_T} \big[\zeta_\delta A\big(v_k,\nabla v_k^\beta\big)\cdot \nabla \varphi -\zeta_\delta f\varphi -\zeta_\delta k^{-\alpha}|\nabla z|^{p-2}\nabla z \cdot \nabla \varphi \big]\d x \d t.
\end{align*}
Passing to the limit $\delta \to 0$ we see that for any $\tau\in(0,T)$ and $h\in(0,T-\tau)$ there holds
\begin{align*}
\int_\Omega [v_k(\tau)-v_k(\tau+h)]\varphi \d x = \int^{\tau+h}_\tau \hspace{-2mm}\int_\Omega \big[A\big(v_k,\nabla v_k^\beta\big)\cdot \nabla \varphi -f\varphi -k^{-\alpha}|\nabla z|^{p-2}\nabla z \cdot \nabla \varphi\big] \d x \d t.
\end{align*}
Regarding $v_k$ for fixed times as an element of $(W^{1,p}_0(\Omega))'$, and letting $\langle \cdot, \cdot \rangle$ denote the dual pairing of $(W^{1,p}_0(\Omega))'$ and $W^{1,p}_0(\Omega)$ we thus have
\begin{align*}
|\langle v_k(\tau)-v_k(\tau+h),\varphi\rangle| 
&\leq  
\int^{\tau+h}_\tau \int_\Omega \Big[\big[|A(v_k,\nabla v_k^\beta)|+|\nabla z|^{p-1}\big]|\nabla \varphi| + |f||\varphi|\Big]\d x \d t
\\
&\leq  
c\int^{\tau+h}_\tau \int_\Omega \Big[\big[|\nabla v_k|^{p-1}+L^{\alpha}|\nabla z|^{p-1}\big]|\nabla \varphi| + |f||\varphi|\Big]\d x \d t,
\end{align*}
where in the last line we used the bound for $v_k$ from Lemma~\ref{lem:v_k-unifly-bdd}. 
By H\"older's inequality we continue to estimate
\begin{align*}
|\langle v_k(\tau)-v_k(\tau+h), \varphi \rangle| 
&\leq c\int^{\tau+h}_\tau \bigg[\int_\Omega \big[|\nabla v_k^\beta|^p+|\nabla z|^p\big]\d x \bigg]^\frac{p-1}{p}\norm{\nabla \varphi}_{L^p(\Omega)} \d t
\\
&\quad + c\int^{\tau+h}_\tau \bigg[\int_\Omega |f|^\frac{p}{p-1}\d x\bigg]^\frac{p-1}{p}\norm{\varphi}_{L^p(\Omega)}\d t
\\
& \leq c\, h^\frac{1}{p} \norm{\varphi}_{W^{1,p}(\Omega)}\bigg[\iint_{\Omega_T} \big[|\nabla v_k^\beta|^p+|\nabla z|^p + |f|^\frac{p}{p-1}\big]\d x \d t\bigg]^\frac{p-1}{p},
\end{align*}
where $c=c(\alpha,p,L)$. 
In light of Lemma \ref{lem:grad_of_vk_unifly_bdd}, the expression in the square brackets is bounded by a constant independent of $k$. Thus, by the density of $C^\infty_0(\Omega)$ in $W^{1,p}_0(\Omega)$, we have shown that for almost all $\tau\in(0,T)$ and $h\in(0,T-\tau)$ there holds
\begin{align*}
\norm{v_k(\tau)-v_k(\tau+h)}_{(W^{1,p}_0(\Omega))'}\leq c\, h^\frac{1}{p},
\end{align*}
for a constant $c$ independent of $k$.
Moreover, we recall from Lemmas~\ref{lem:v_k-unifly-bdd} and~\ref{lem:grad_of_vk_unifly_bdd} that $v_k$ is a bounded sequence in $L^\infty(\Omega_T)$ and $v_k^\beta$ in $L^p(0,T;W^{1,p}(\Omega))$. 
We have thus ensured that the assumptions of Corollary~\ref{thm:JS-th6m}\,(i) are satisfied with $Y=(W^{1,p}_0(\Omega))'$, $m=\beta$ and $q=\mu=p$.
Therefore, the application of Corollary~\ref{thm:JS-th6m}\,(i) to $(v_{k_j})$ ensures that $(v^\beta_{k_j})$ is relatively compact in $L^{p}(\Omega_T)$. In particular, there exists a strongly convergent subsequence and by virtue of \eqref{weak-vkj} this implies the strong convergence of the whole sequence $(v^\beta_{k_j})$ to the limit function $w$, i.e.\,we have that 
\begin{equation}\label{strong-vkj}
	v_{k_j}^\beta\to w\quad\mbox{strongly in $L^p(\Omega_T)$.}
\end{equation}
By passing to another subsequence we also obtain that $v_k^\beta$ converges to $w$ pointwise a.e.~in $\Omega_T$ and the uniform boundedness of $v_k$ ensures that also $w\in L^\infty(\Omega_T)$. We now define $v\in L^\infty(\Omega_T)$ via $v^\beta=w$, so that $v_k^\beta\wto v^\beta$ weakly in $L^p(0,T;W^{1,p}(\Omega))$ and $v_k^\beta\to v^\beta$ strongly in $L^p(\Omega_T)$ by \eqref{weak-vkj} and \eqref{strong-vkj}. The uniform boundedness of the sequence $v_k$ together with the strong $L^p$-convergence $v_k^\beta\to v^\beta$ imply that $v_k\to v$ strongly in $L^q(\Omega_T)$ for any $q\ge 1$.
\end{proof}

\medskip
The next step of our argument is to show the strong convergence of the gradients.

\begin{lem}\label{nabla-v_k-convg}
Let the assumptions of Lemma~\ref{lem:strong} be in force. Then, for the subsequence $(k_j)_{j\in\N}$ there additionally holds that 
$$
	\nabla v_{k_j}^\beta\to \nabla v^\beta
	\quad \mbox{strongly in $L^p(\Omega\times I)$}
$$
for any closed subinterval $I$ of $(0,T)$.
\end{lem}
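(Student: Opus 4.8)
I would combine the monotonicity of the vector field $A$ (Remark~\ref{rem:monotone}) with the mollified weak form of Lemma~\ref{lem:weakform_2_mol}. Fix a closed interval $I=[t_1,t_2]$ with $0<t_1<t_2<T$ and pick $\zeta\in W^{1,\infty}([0,T];\R_{\ge0})$ compactly supported in $(0,T)$ with $\zeta\equiv1$ on $I$. Since $A(v_{k_j},\xi)=\beta^{1-p}|\xi+\beta v_{k_j}^{\beta-1}\nabla z|^{p-2}(\xi+\beta v_{k_j}^{\beta-1}\nabla z)$ carries, for fixed $v_{k_j}$, the same shift $\beta v_{k_j}^{\beta-1}\nabla z$ in both of its slots, Remark~\ref{rem:monotone} applies to the monotonicity defect
\[
	\mathcal I_j:=\iint_{\Omega_T}\zeta\,\big(A(v_{k_j},\nabla v_{k_j}^\beta)-A(v_{k_j},\nabla v^\beta)\big)\cdot\big(\nabla v_{k_j}^\beta-\nabla v^\beta\big)\,\d x\d t\ \ge\ 0 .
\]
The crux is to prove $\mathcal I_j\to0$ along the subsequence; the strong $L^p$-convergence of $\nabla v_{k_j}^\beta$ on $\Omega\times I$ then follows from the lower bounds in Remark~\ref{rem:monotone}.

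\textbf{The frozen part.} Writing $\mathcal I_j=\mathcal I_j^{(1)}-\mathcal I_j^{(2)}$ with $\mathcal I_j^{(2)}:=\iint_{\Omega_T}\zeta\,A(v_{k_j},\nabla v^\beta)\cdot(\nabla v_{k_j}^\beta-\nabla v^\beta)\,\d x\d t$, I would first show $\mathcal I_j^{(2)}\to0$. By Lemma~\ref{lem:strong}, $v_{k_j}\to v$ a.e.\ in $\Omega_T$ and $\|v_{k_j}\|_{L^\infty(\Omega_T)}\le L$; since $|A(v_{k_j},\nabla v^\beta)|^{p'}\le c\,(|\nabla v^\beta|+L^{\beta-1}|\nabla z|)^{p}\in L^1(\Omega_T)$ (note $\nabla z\in L^{\sigma\beta p}(\Omega)\subset L^p(\Omega)$ on the bounded $\Omega$), dominated convergence gives $\zeta A(v_{k_j},\nabla v^\beta)\to\zeta A(v,\nabla v^\beta)$ strongly in $L^{p'}(\Omega_T;\R^n)$, while $\nabla v_{k_j}^\beta\wto\nabla v^\beta$ weakly in $L^p(\Omega_T;\R^n)$. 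Hence $\mathcal I_j^{(2)}\to0$, and it remains to prove $\limsup_{j\to\infty}\mathcal I_j^{(1)}\le0$.

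\textbf{The main part via the mollified weak form.} For $h>0$ set $w=w_{j,h}$ with $w_{j,h}^\beta:=k_j^{-\beta}+\Ex{v^\beta}$. Then $w_{j,h}^\beta-k_j^{-\beta}=\Ex{v^\beta}\in L^p(0,T;W_0^{1,p}(\Omega))$ and $\partial_t w_{j,h}^\beta=\tfrac1h(v^\beta-\Ex{v^\beta})\in L^{\frac{\beta+1}{\beta}}(\Omega_T)$ by Lemma~\ref{expmolproperties}, so $w_{j,h}$ is admissible in Lemma~\ref{lem:weakform_2_mol}. Applying that lemma with this $w$ and the above $\zeta$, and splitting $\nabla v_{k_j}^\beta-\nabla v^\beta=(\nabla v_{k_j}^\beta-\nabla\Ex{v^\beta})+(\nabla\Ex{v^\beta}-\nabla v^\beta)$, one rewrites
\begin{align*}
	\mathcal I_j^{(1)}
	&=\iint_{\Omega_T}\zeta'\b[v_{k_j},w_{j,h}]\,\d x\d t
	-\iint_{\Omega_T}\zeta\,\partial_t w_{j,h}^\beta\,(v_{k_j}-w_{j,h})\,\d x\d t
	+\iint_{\Omega_T}\zeta f\big(v_{k_j}^\beta-w_{j,h}^\beta\big)\,\d x\d t\\
	&\quad+k_j^{-\alpha}\iint_{\Omega_T}\zeta|\nabla z|^{p-2}\nabla z\cdot\big(\nabla v_{k_j}^\beta-\nabla\Ex{v^\beta}\big)\,\d x\d t
	+\iint_{\Omega_T}\zeta\,A(v_{k_j},\nabla v_{k_j}^\beta)\cdot\big(\nabla\Ex{v^\beta}-\nabla v^\beta\big)\,\d x\d t .
\end{align*}
Now let $j\to\infty$ (using $v_{k_j}\to v$ in every $L^q(\Omega_T)$ and $v_{k_j}^\beta\to v^\beta$ in $L^p(\Omega_T)$ from Lemma~\ref{lem:strong}, $k_j\to\infty$, and the uniform bounds of Lemmas~\ref{lem:v_k-unifly-bdd} and~\ref{lem:grad_of_vk_unifly_bdd}) and then $h\downarrow0$: the $\zeta'$-term is $\le c\iint_{\Omega_T}|\zeta'|\,|v^\beta-\Ex{v^\beta}|^{\frac{\beta+1}{\beta}}\to0$ by Lemma~\ref{estimates:boundary_terms}\,(iii) and Lemma~\ref{expmolproperties}\,(i); the $f$-term is $\le\|\zeta\|_{L^\infty}\|f\|_{L^{p'}(\Omega_T)}\|v^\beta-\Ex{v^\beta}\|_{L^p(\Omega_T)}\to0$; the $k_j^{-\alpha}$-term vanishes already as $j\to\infty$ because the integral stays bounded; the last term is bounded in modulus, uniformly in $j$, by $c\,\|\nabla\Ex{v^\beta}-\nabla v^\beta\|_{L^p(\Omega_T)}\to0$ (using the uniform $L^{p'}$-bound on $A(v_{k_j},\nabla v_{k_j}^\beta)$ from Lemma~\ref{lem:grad_of_vk_unifly_bdd}); and the parabolic term converges, as $j\to\infty$, to $-\iint_{\Omega_T}\zeta\,\tfrac1h\big(v^\beta-\Ex{v^\beta}\big)\big(v-(\Ex{v^\beta})^{1/\beta}\big)\,\d x\d t\le0$, since $\zeta\ge0$ and $(a^\beta-b^\beta)(a-b)\ge0$ for $a,b\ge0$. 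Collecting these estimates yields $\limsup_{j\to\infty}\mathcal I_j^{(1)}\le\omega(h)$ with $\omega(h)\to0$ as $h\downarrow0$, hence $\limsup_j\mathcal I_j^{(1)}\le0$ and therefore $\mathcal I_j\to0$.

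\textbf{Conclusion and main obstacle.} If $p\ge2$, Remark~\ref{rem:monotone} gives $c\iint_{\Omega_T}\zeta\,|\nabla v_{k_j}^\beta-\nabla v^\beta|^p\le\mathcal I_j\to0$, and since $\zeta\equiv1$ on $I$ this is exactly $\nabla v_{k_j}^\beta\to\nabla v^\beta$ in $L^p(\Omega\times I)$. If $p<2$, Remark~\ref{rem:monotone} instead bounds $\mathcal I_j$ from below by $c\iint_{\Omega_T}\zeta\,W_j^{p-2}|\nabla v_{k_j}^\beta-\nabla v^\beta|^2$ with $W_j:=|\nabla v_{k_j}^\beta+\beta v_{k_j}^{\beta-1}\nabla z|+|\nabla v^\beta+\beta v_{k_j}^{\beta-1}\nabla z|$, and the H\"older argument from the proof of Lemma~\ref{lem:approx-sol} — writing, on the set where the two gradients differ, $|\nabla v_{k_j}^\beta-\nabla v^\beta|^p=(W_j^{p-2}|\nabla v_{k_j}^\beta-\nabla v^\beta|^2)^{p/2}(W_j^p)^{(2-p)/2}$, with $\iint_{\Omega_T}\zeta W_j^p$ bounded uniformly in $j$ by Lemmas~\ref{lem:v_k-unifly-bdd} and~\ref{lem:grad_of_vk_unifly_bdd} — again yields $\iint_{\Omega_T}\zeta\,|\nabla v_{k_j}^\beta-\nabla v^\beta|^p\to0$ and the same conclusion. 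I expect the main obstacle to be the analysis of $\mathcal I_j^{(1)}$: organizing the iterated limit $j\to\infty$ followed by $h\downarrow0$ so that every term is controlled, and in particular exploiting the monotonicity of $s\mapsto s^{1/\beta}$ to discard the parabolic term rather than having to estimate it.
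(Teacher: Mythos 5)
Your proof is correct and takes essentially the same approach as the paper: the same monotonicity decomposition around the ``frozen'' vector field $A(v_{k_j},\nabla v^\beta)$, the same application of Lemma~\ref{lem:weakform_2_mol} with the comparison function $w^\beta=k_j^{-\beta}+\Ex{v^\beta}$, the same use of Lemma~\ref{expmolproperties}\,(ii) to discard the sign-definite parabolic term, and the same H\"older interpolation to pass from $V_k^{p-2}|\cdot|^2$ to $|\cdot|^p$ when $p<2$. The only cosmetic difference is that you fold the paper's two remainder terms $\mathrm{II}_{k}$ and $\mathrm{III}_{k}$ into a single strong-$L^{p'}$-against-weak-$L^p$ pairing, which is equivalent.
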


\begin{proof}[Proof]
Due to Remark~\ref{rem:monotone} we know that for any $k>1$ there holds
\begin{align*}
[A(v_k,\nabla v_k^\beta)&-A(v_k,\nabla v^\beta)] \cdot (\nabla v_k^\beta - \nabla v^\beta)
\geq 
\begin{cases}
c\,|\nabla v_k^\beta-\nabla v^\beta|^p, \hspace{5mm} &p\geq 2
\\[5pt]
c\, V_k^{p-2}|\nabla v_k^\beta-\nabla v^\beta|^2, & p<2,
\end{cases}
\end{align*}
for a constant $c=c(p)$ and where
\begin{align*}
V_k:=\big(|\nabla v_k^\beta+\beta v_k^{\beta-1}\nabla z|^2 + |\nabla v^\beta+\beta v_k^{\beta-1}\nabla z|^2\big)^\frac12.
\end{align*}
The expression $V_k^{p-2}$ which appears in the case $p<2$ is not defined if $V_k=0$, but this can occur only if $\nabla v^\beta=\nabla v_k^\beta$, and will therefore cause no problems. 
Now, let $\zeta \in C^\infty_0((0,T);[0,1])$. When $p<2$, H\"older's inequality implies that 
\begin{align*}
\iint_{\Omega_T} & \zeta|\nabla v_k^\beta-\nabla v^\beta|^p\d x \d t \\
&\leq \iint_{\Omega_T \cap \{ \nabla v_k^\beta \neq \nabla v^\beta \} } \zeta|\nabla v_k^\beta-\nabla v^\beta|^p  V_k^{\frac{p}{2}(p-2)} V_k^{\frac{p}{2}(2-p)}\d x \d t
\\
&\leq \bigg[\iint_{\Omega_T \cap \{ \nabla v_k^\beta \neq \nabla v^\beta \} } \zeta V_k^{p-2}|\nabla v_k^\beta-\nabla v^\beta|^2\d x \d t\bigg]^\frac{p}{2}\bigg[\iint_{\Omega_T}\zeta V_k^p\d x \d t\bigg]^\frac{2-p}{2}
\\
&\leq c\bigg[\iint_{\Omega_T} \zeta[A(v_k,\nabla v_k^\beta)-A(v_k,\nabla v^\beta)] \cdot (\nabla v_k^\beta - \nabla v^\beta)\d x \d t\bigg]^\frac{p}{2}
\\
&\quad \cdot\bigg[\iint_{\Omega_T}|\nabla v_k^\beta|^p + |\nabla v^\beta|^p + v_k^{p(\beta-1)}|\nabla z|^p\d x \d t  \bigg]^\frac{2-p}{2}.
\end{align*}
By Lemma \ref{lem:v_k-unifly-bdd} and Lemma \ref{lem:grad_of_vk_unifly_bdd}, the expression on the last row is bounded by a constant independent of $k$. The previous observations show that regardless of the value of $p$,
\begin{align}\label{kiwi}
\notag \frac{1}{c}\bigg[  \iint_{\Omega_T} &\zeta|\nabla v_k^\beta  -\nabla v^\beta|^p\d x \d t \bigg]^\nu \\\notag
&\leq \iint_{\Omega_T} \zeta[A(v_k,\nabla v_k^\beta)-A(v_k,\nabla v^\beta)] \cdot (\nabla v_k^\beta - \nabla v^\beta) \d x \d t 
\\
&  = \iint_{\Omega_T} \zeta A(v_k,\nabla v_k^\beta) \cdot (\nabla v_k^\beta - \nabla v^\beta) \d x \d t - \iint_{\Omega_T} \zeta A(v,\nabla v^\beta) \cdot (\nabla v_k^\beta - \nabla v^\beta)\d x \d t
\notag\\ 
&\quad + \iint_{\Omega_T} \zeta[A(v,\nabla v^\beta)-A(v_k,\nabla v^\beta)] \cdot (\nabla v_k^\beta - \nabla v^\beta) \d x \d t \notag\\
&=:
\mbox{I}_k + \mbox{II}_k + \mbox{III}_k,
\end{align}
where $\nu= \max\{1,\frac{2}{p}\}$ and the constant $c$ is independent of $k$.
From Lemma~\ref{lem:strong} we know that the subsequence $(v_{k_j}^\beta)$ converges weakly in $L^p(0,T;W^{1,p}(\Omega))$ to $v^\beta$ and that $v_{k_j}$ converges strongly in $L^{q}(\Omega_T)$ for any $q\ge 1$ and pointwise a.e.~in $\Omega_T$ to $v$. 
Since $\zeta A(v,\nabla v^\beta)\in L^{p'}(\Omega_T;\R^n)$, the weak convergence $v_{k_j}^\beta\wto v^\beta$ in $L^p(0,T;W^{1,p}(\Omega))$ shows that the second term on the right-hand side of \eqref{kiwi} vanishes in the limit $j\to\infty$. More precisely, we have
\begin{align*}
\lim_{j\to\infty} \mbox{II}_{k_j}
\equiv
\lim_{j\to\infty} \iint_{\Omega_T} \zeta A(v,\nabla v^\beta) \cdot (\nabla v_{k_j}^\beta - \nabla v^\beta)\d x \d t 
=
0.
\end{align*}
To treat the third term on the right-hand side of \eqref{kiwi}, we note that Lemma \ref{lem:v_k-unifly-bdd}, the pointwise a.e.~convergence $v_{k_j}\to v$ and the dominated convergence theorem guarantee that $A(v_{k_j},\nabla v^\beta)$ converges to $A(v,\nabla v^\beta)$ strongly in $L^{p'}$. The quantity $\nabla v_k^\beta-\nabla v^\beta$ stays bounded in $L^p$ due to Lemma \ref{lem:grad_of_vk_unifly_bdd}, so by H\"older's inequality we conclude that 
\begin{align*}
\lim_{j\to\infty} \mbox{III}_{k_j}
\equiv
\lim_{j\to\infty} \iint_{\Omega_T} \zeta\big[A(v,\nabla v^\beta)-A(v_{k_j},\nabla v^\beta)\big] \cdot (\nabla v_{k_j}^\beta - \nabla v^\beta) \d x \d t
=
0.
\end{align*}
It only remains to control the first term on the right-hand side of \eqref{kiwi}. This term can be re-written as 
\begin{align*}
\mbox{I}_{k} 
&= \iint_{\Omega_T} \zeta A(v_k,\nabla v_k^\beta) \cdot \big(\nabla v_k^\beta - \nabla \Ex{v^\beta}\big) \d x \d t +  \iint_{\Omega_T} \zeta A(v_k,\nabla v_k^\beta) \cdot \big(\nabla \Ex{v^\beta}-\nabla v^\beta\big) \d x \d t \\
&=: 
\mbox{I}_{k}^{(1)} + \mbox{I}_{k}^{(2)},
\end{align*}
where $\Ex{v^\beta}$ denotes the exponential time-mollification defined in \eqref{def:moll}.
Since $A(v_k,\nabla v_k^\beta)$ is bounded in $L^{p'}$ uniformly in $k$ we can use H\"older's inequality to estimate the second term on the right-hand side as
\begin{align*}
|\mbox{I}_{k}^{(2)}|
\le
\| A(v_k,\nabla v_k^\beta)\|_{L^{p'}(\Omega_T)} \norm{\nabla \Ex{v^\beta}-\nabla v^\beta}_{L^p(\Omega_T)} 
\leq 
c\,\norm{\nabla \Ex{v^\beta}-\nabla v^\beta}_{L^p(\Omega_T)},
\end{align*}
with a constant $c$ independent of $k$. 
In order to treat the first term, we use the modified weak formulation \eqref{weakform_2_k} with the comparision function $w_{h,k}$ defined by
$w_{h,k}^\beta=k^{-\beta}+\Ex{v^\beta}$.
The choice of comparison function requires some justification. We know that $k^{-\beta}+\Ex{v^\beta}\in k^{-\beta}+L^p(0,T;W_0^{1,p}(\Omega))$ since $v^\beta\in L^p(0,T;W_0^{1,p}(\Omega))$. By Lemma~\ref{expmolproperties}\,(iii) the exponential time mollification preserves this space. Moreover, by Lemma ~\ref{expmolproperties}\,(ii) and the boundedness of $v$ we see that $\partial_t w_{h,k}^\beta \in L^\frac{\beta+1}{\beta}(\Omega_T)$. Thus, the comparison function $w_{h,k}$ is admissible, so that
\begin{align*}
	\mbox{I}_{k}^{(1)}
	&=
	\iint_{\Omega_T} \Big[\zeta' \b\big[v_k,w_{h,k}\big] -
	\zeta \partial_t \Ex{v^\beta}(v_k-w_{h,k}) \Big]\d x\d t \\
	&\quad +
	\iint_{\Omega_T} \zeta \Big[
	f\big(v_k^\beta-w_{h,k}^\beta\big) + k^{-\alpha} |\nabla z|^{p-2}\nabla z \cdot 
	\big(\nabla v_k^\beta-\nabla \Ex{v^\beta}\big)\Big] \d x\d t .
\end{align*}
By the convergence properties of $v_{k_j}$ we obtain
\begin{align*}
 \limsup_{j\to\infty} \mbox{I}_{k_j}^{(1)}
& = \iint_{\Omega_T} \Big[\zeta' \b\big[v,(\Ex{v^\beta})^{\frac{1}{\beta}}\big] -
	\zeta \partial_t \Ex{v^\beta}\big(v-(\Ex{v^\beta})^{\frac{1}{\beta}}\big) \Big]\d x\d t \\
&\quad+ 
\iint_{\Omega_T} \zeta f \big(v^\beta -\Ex{v^\beta}\big) \d x \d t \\
&\le 
\iint_{\Omega_T} \zeta' \b\big[v,(\Ex{v^\beta})^{\frac{1}{\beta}}\big] \d x\d t + 
\iint_{\Omega_T} \zeta f \big(v^\beta -\Ex{v^\beta}\big) \d x \d t,
\end{align*}
where in the last line we used Lemma \ref{expmolproperties}\,(ii).
Combining the last two estimates and joining the previously obtained bounds and convergence properties for $\mbox{I}_{k_j}$ -- $\mbox{III}_{k_j}$ with \eqref{kiwi}, we end up with
\begin{align*}
&\limsup_{j\to\infty} \bigg[\iint_{\Omega_T} \zeta|\nabla v_{k_j}^\beta  -\nabla v^\beta|^p\d x \d t\bigg]^\nu \\
&\quad\leq  c\big\|\nabla \Ex{v^\beta}-\nabla v^\beta\big\|_{L^p(\Omega_T)}+
c\iint_{\Omega_T} \Big[\zeta' \b\big[v,(\Ex{v^\beta})^{\frac{1}{\beta}}\big]  +  \zeta f \big(v^\beta -\Ex{v^\beta}\big)\Big] \d x \d t.
\end{align*}
By Lemma \ref{expmolproperties}\,(iii) we see that the right-hand side converges to zero as $h\to 0$. Thus, we have shown that
\begin{align*}
\lim_{j\to \infty}\iint_{\Omega_T} \zeta|\nabla v_{k_j}^\beta  -\nabla v^\beta|^p\d x \d t = 0.
\end{align*}
This proves the claim of the lemma, since for any closed subinterval $I \subset (0,T)$ we can choose $\zeta \in C^\infty_0(0,T;[0,1])$ such that $\xi|_I=1$. In this case $\chi_I\leq \xi$ and the result follows.
\end{proof}

\begin{lem}\label{lem:huanggua}
The function $v$ obtained in Lemma~\ref{lem:strong} satisfies
\begin{align}\label{huanggua}
\iint_{\Omega_T} \big[A(v,\nabla v^\beta)\cdot \nabla \varphi - v\partial_t \varphi\big]\d x\d t =\iint_{\Omega_T} f\varphi \d x\d t + \int_\Omega \Psi\varphi(0)\d x,
\end{align}
for every $\varphi \in C^\infty(\bar{\Omega}\times[0,T])$ with support contained in $K\times [0,\tau]$ where $K\subset \Omega$ is compact and $\tau\in(0,T)$.
\end{lem}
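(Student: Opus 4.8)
The plan is to pass to the limit along the subsequence $(k_j)_{j\in\N}$ furnished by Lemma~\ref{lem:strong} in the weak formulation \eqref{v_k_weak_form_with_init_val}, which holds for all sufficiently large $k$ and precisely for test functions $\varphi$ of the type considered in the statement. First I would dispose of the elementary terms. Since $v_{k_j}\to v$ strongly in $L^1(\Omega_T)$ (indeed in every $L^q$) by Lemma~\ref{lem:strong} and $\partial_t\varphi\in L^\infty(\Omega_T)$, the parabolic term converges, $\iint_{\Omega_T}v_{k_j}\partial_t\varphi\,\d x\d t\to\iint_{\Omega_T}v\partial_t\varphi\,\d x\d t$; the source term $\iint_{\Omega_T}f\varphi\,\d x\d t$ is independent of $k$; because $k_j^{-\alpha}\to0$ and $|\nabla z|^{p-1}|\nabla\varphi|\in L^1(\Omega_T)$ (as $\nabla z\in L^{\sigma\beta p}(\Omega)\subset L^{p-1}(\Omega)$ and $\Omega$ is bounded), the term $k_j^{-\alpha}\iint_{\Omega_T}|\nabla z|^{p-2}\nabla z\cdot\nabla\varphi\,\d x\d t$ tends to zero; and since $v_{k_j}(0)=\tfrac1{k_j}+\Psi$ we get $\int_\Omega v_{k_j}(0)\varphi(0)\,\d x=\tfrac1{k_j}\int_\Omega\varphi(0)\,\d x+\int_\Omega\Psi\varphi(0)\,\d x\to\int_\Omega\Psi\varphi(0)\,\d x$.

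The only delicate term is the diffusion term $\iint_{\Omega_T}A(v_{k_j},\nabla v_{k_j}^\beta)\cdot\nabla\varphi\,\d x\d t$, which I would handle by a weak-convergence argument in $L^{p'}$. From the explicit form \eqref{def:A} one has the pointwise bound $|A(v_k,\nabla v_k^\beta)|\le c\big(|\nabla v_k^\beta|^{p-1}+v_k^{(\beta-1)(p-1)}|\nabla z|^{p-1}\big)$, so, using $(p-1)p'=p$ together with the uniform bounds of Lemma~\ref{lem:v_k-unifly-bdd} and Lemma~\ref{lem:grad_of_vk_unifly_bdd} and the fact that $\nabla z\in L^p(\Omega)$, the sequence $\big(A(v_k,\nabla v_k^\beta)\big)$ is bounded in $L^{p'}(\Omega_T;\R^n)$. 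Hence, after passing to a further subsequence, $A(v_{k_j},\nabla v_{k_j}^\beta)\wto\mathcal A$ weakly in $L^{p'}(\Omega_T;\R^n)$ for some $\mathcal A$. To identify $\mathcal A$ I would fix a closed subinterval $I=[\epsilon,T-\epsilon]\subset(0,T)$: by Lemma~\ref{nabla-v_k-convg} we have $\nabla v_{k_j}^\beta\to\nabla v^\beta$ strongly in $L^p(\Omega\times I)$, hence a.e.\ on $\Omega\times I$ along a subsequence, while $v_{k_j}\to v$ a.e.\ in $\Omega_T$ by Lemma~\ref{lem:strong}; the continuity of $A(\cdot,\cdot)$ then gives $A(v_{k_j},\nabla v_{k_j}^\beta)\to A(v,\nabla v^\beta)$ a.e.\ on $\Omega\times I$, and since a weak $L^{p'}$-limit that coincides a.e.\ with a pointwise limit must equal it, we obtain $\mathcal A=A(v,\nabla v^\beta)$ a.e.\ on $\Omega\times I$. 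Letting $\epsilon\downarrow0$, the sets $\Omega\times[\epsilon,T-\epsilon]$ exhaust $\Omega_T$ up to a null set, so $\mathcal A=A(v,\nabla v^\beta)$ a.e.\ in $\Omega_T$; in particular the limit does not depend on the subsequence, so the whole sequence converges weakly and, as $\nabla\varphi\in L^p(\Omega_T;\R^n)$, $\iint_{\Omega_T}A(v_{k_j},\nabla v_{k_j}^\beta)\cdot\nabla\varphi\,\d x\d t\to\iint_{\Omega_T}A(v,\nabla v^\beta)\cdot\nabla\varphi\,\d x\d t$.

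Collecting all these limits in \eqref{v_k_weak_form_with_init_val} yields \eqref{huanggua}. I expect the main obstacle to be exactly the identification of the weak $L^{p'}$-limit of the fluxes $A(v_{k_j},\nabla v_{k_j}^\beta)$: the strong gradient convergence of Lemma~\ref{nabla-v_k-convg} is available only on compact subintervals of $(0,T)$ and not up to $t=0$, so one cannot expect strong convergence of the fluxes on all of $\Omega_T$ and cannot pass to the limit termwise in a naive way. The remedy is to use only weak convergence of the fluxes — which is all that is needed against the bounded test function $\nabla\varphi$ — and to pin down the weak limit by its a.e.\ coincidence with the pointwise limit on an exhausting family of compact subintervals.
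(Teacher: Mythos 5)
Your argument is correct, and it reaches the same conclusion as the paper, but by a genuinely different route for the elliptic term. The elementary terms (parabolic term, source term, $k^{-\alpha}$ term, initial value term) are handled identically. For the flux term, the paper establishes the \emph{strong} $L^1(\Omega_T)$-convergence of the flux difference $A(v_{k_j},\nabla v_{k_j}^\beta)-A(v,\nabla v^\beta)$ by a time-splitting argument: on a short initial slab $\Omega\times[0,\delta]$ the contribution is made small uniformly in $k$ by H\"older's inequality and the uniform $L^{p'}$-bound of the fluxes, while on $\Omega\times[\delta,\tau]$ the flux difference is split as $\big(A(v_k,\nabla v_k^\beta)-A(v_k,\nabla v^\beta)\big) + \big(A(v_k,\nabla v^\beta)-A(v,\nabla v^\beta)\big)$ and each piece is controlled pointwise via Lemma~\ref{p-laplace-estim}, combining the strong gradient convergence from Lemma~\ref{nabla-v_k-convg} on compact subintervals with the dominated convergence theorem. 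Your argument instead uses only \emph{weak} $L^{p'}$-convergence: the uniform $L^{p'}$-bound gives a weak subsequential limit $\mathcal A$, which you identify with $A(v,\nabla v^\beta)$ by matching it against the a.e.~pointwise limit on exhausting compact slabs $\Omega\times[\epsilon,T-\epsilon]$. This buys a cleaner structure and avoids the explicit pointwise estimates, at the cost of a slightly weaker conclusion (weak rather than strong convergence of the fluxes, which suffices here since you pair against the fixed $\nabla\varphi\in L^p$). The one step you should make more explicit is the identification of the weak $L^{p'}$-limit with the a.e.~limit: this is not a formal triviality but a consequence of Vitali's theorem (or uniform integrability), which requires $p'>1$ — a condition that does hold since $p\in(1,\infty)$ — so that $L^{p'}$-boundedness on the finite-measure set $\Omega\times[\epsilon,T-\epsilon]$ upgrades a.e.~convergence to strong $L^1$-convergence there, whence the weak limit must agree with the pointwise one. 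That lemma is precisely the abstract tool replacing the paper's concrete estimation.
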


\begin{proof}[Proof]
We fix a test function $\varphi$ as above, and note that it is sufficient to consider functions satisfying $|\nabla \varphi|\leq 1$. Recall that $v_{k_j}$ satisfies \eqref{v_k_weak_form_with_init_val} with $k=k_j$ and $j$ sufficiently large. The goal is to pass to the limit $j\to \infty$. The limit
\begin{align*}
\lim_{j\to \infty} \iint_{\Omega_T} v_{k_j}\partial_t \varphi\d x \d t = \iint_{\Omega_T} v\partial_t \varphi\d x \d t
\end{align*}
follows from the $L^1$-convergence of $v_{k_j}$ to $v$ and the limits 
\begin{align*}
\lim_{j\to\infty} k_j^{-\alpha}\iint_{\Omega_T} |\nabla z|^{p-2}\nabla z\cdot \nabla \varphi \d x \d t = 0, \hspace{8mm} \lim_{j\to\infty}\int_\Omega v_{k_j}(0)\varphi(0)\d x = \int_\Omega \Psi\varphi(0)\d x,
\end{align*}
are trivial (recall that $v_k(0)=\Psi+\tfrac1k$). It remains to treat the elliptic term. For this, we abbreviate 
$$
	\mathcal A_k
	:=
	A(v_k,\nabla v_k^\beta) - A(v,\nabla v^\beta)
$$
and note that for any $\delta \in (0,\tau)$ we can estimate 
\begin{align}\label{magyar}
 \iint_{\Omega_T} &|A(v_k,\nabla v_k^\beta)\cdot \nabla \varphi - A(v,\nabla v^\beta)\cdot \nabla \varphi|\d x \d t 
\\
\notag &\leq  \iint_{\Omega\times [0,\delta]} |\mathcal A_k|\d x \d t
+ \iint_{\Omega\times [\delta,\tau]} |\mathcal A_k|\d x \d t,
\end{align}
where we have used the Cauchy-Schwarz inequality and the bound on $\nabla \varphi$. Let $\varepsilon>0$. Due to the definition of $A$, and the uniform bounds obtained in Lemma \ref{lem:v_k-unifly-bdd} and Lemma \ref{lem:grad_of_vk_unifly_bdd}, the integrand in the first term on the right-hand side of \eqref{magyar} is bounded in the $L^{p'}$-norm independently of $k$, so H\"older's inequality allows us to conclude that for sufficiently small $\delta>0$, the bound
\begin{align}\label{mongolia}
\iint_{\Omega\times [0,\delta]} |\mathcal A_k|\d x \d t<\frac{\varepsilon}{2},
\end{align}
is satisfied independently of $k\in \N$. With such a fixed $\delta$ we now estimate the second term on the right-hand side of \eqref{magyar}  as 
\begin{align}\label{terms}
\notag \iint_{\Omega\times [\delta,\tau]} &|\mathcal A_k|\d x \d t
\\
&\hspace{-5mm}\leq \iint_{\Omega\times [\delta,\tau]} |A(v_k,\nabla v_k^\beta) - A(v_k,\nabla v^\beta)|\d x \d t + \iint_{\Omega\times [\delta,\tau]} |A(v_k,\nabla v^\beta) - A(v,\nabla v^\beta)|\d x \d t.
\end{align}
We will use Lemma \ref{p-laplace-estim} to treat both terms. This lemma shows that the integrand in the first term may be estimated as
\begin{align*}
|A(v_k,\nabla v_k^\beta) - A(v_k,\nabla v^\beta)|&\leq c \big(|\nabla v^\beta+ \beta v_k^{\beta-1}\nabla z| + |\nabla v^\beta-\nabla v_k^\beta|\big)^{p-2}|\nabla v^\beta-\nabla v_k^\beta|
\\
&\leq  c|\nabla v^\beta-\nabla v_k^\beta|^{p-1}+c\,b_p|\nabla v^\beta+ \beta v_k^{\beta-1}\nabla z|^{p-2}|\nabla v^\beta-\nabla v_k^\beta|,
\end{align*}
where $c=c(p)$ and $b_p=0$ if $p<2$ and $b_p=1$ if $p\ge 2$. Convergence of the integral of the first term is clear as $k=k_j\to \infty$ due to Lemma \ref{nabla-v_k-convg}. In the case $p\geq 2$ the integral of the second term is treated using Lemma \ref{lem:v_k-unifly-bdd}, Lemma \ref{nabla-v_k-convg} and H\"older's inequality:
\begin{align*}
\iint_{\Omega\times [\delta,\tau]} & |\nabla v^\beta+ \beta v_k^{\beta-1}\nabla z|^{p-2}|\nabla v^\beta-\nabla v_k^\beta| \d x\d t 
\\
&\leq \bigg[\iint_{\Omega\times [\delta,\tau]}(|\nabla v^\beta|+\beta L^{\beta-1}|\nabla z |)^\frac{p(p-2)}{p-1} \d x \d t\bigg]^\frac{p-1}{p} \norm{\nabla v^\beta-\nabla v_k^\beta}_{L^p(\Omega\times [\delta,\tau])}.
\end{align*}
The exponent of the integral is less than $p$, so the integral is a finite number. These considerations show that the first term of \eqref{terms} converges to zero as $k\in \{k_j\,|\,j\in \N\}$ approaches infinity. The integrand in the second term of \eqref{terms} can be estimated using Lemma \ref{p-laplace-estim} as
\begin{align*}
|A(v_k,\nabla v^\beta) - A(v,\nabla v^\beta)| &\leq c\big(|\nabla v^\beta+\beta v^{\beta-1}\nabla z| + |\nabla z||v^{\beta-1}-v_k^{\beta-1}|\big)^{p-2}|\nabla z||v^{\beta-1}-v_k^{\beta-1}|
\\
&\leq c |\nabla z|^{p-1} |v^{\beta-1}-v_k^{\beta-1}|^{p-1} \\
&\quad+
c\,b_p |\nabla v^\beta+\beta v^{\beta-1}\nabla z|^{p-2}|\nabla z||v^{\beta-1}-v_k^{\beta-1}|,
\end{align*}
where $c=c(p)$. The terms on the right-hand side can be treated by the dominated convergence theorem. Hence, for $k=k_j$ and for all sufficiently large $j$, we have 
\begin{align}\label{punapapu}
\iint_{\Omega\times [\delta,\tau]} |\mathcal A_k|\d x \d t < \frac{\varepsilon}{2}.
\end{align}
Taking into account \eqref{magyar}, \eqref{mongolia} and \eqref{punapapu}, we have shown that
\begin{align*}
\lim_{j\to \infty} \iint_{\Omega_T}|A(v_{k_j},\nabla v_{k_j}^\beta)\cdot \nabla \varphi - A(v,\nabla v^\beta)\cdot \nabla \varphi|\d x \d t = 0.
\end{align*}
Taking into account all these limits, we have confirmed \eqref{huanggua}. 
\end{proof}

\medskip
Now, we are ready to prove the main theorem.

\begin{proof}[Proof of Theorem~\ref{thm:existence}]
We will show that the function $v$ obtained in Lemma~\ref{lem:strong} is a weak solution to the Cauchy-Dirichlet problem~\eqref{CD-v} in the sense of Definition~\ref{weakdef}. 
Lemma~\ref{lem:huanggua} shows that \eqref{weakform} is valid, since for test functions $\varphi \in C^\infty_0(\Omega_T)$ the second term on the right-hand side of \eqref{huanggua} vanishes. Moreover, by Lemma \ref{lem:time-cont}, we know that $v\in C([0,T];L^{\beta+1}(\Omega))$. It only remains to show that $v(0)=\Psi$. Using \eqref{huanggua} with the test function $\varphi(x)\zeta_\varepsilon(t)$ where $\varphi\in C^\infty_0(\Omega)$ and
\begin{align*}
\zeta_\varepsilon(t)=\begin{cases}
\frac1\varepsilon(\varepsilon-t), &t\in [0,\varepsilon],
\\
0, &t\geq \varepsilon,
\end{cases}
\end{align*}
one obtains
\begin{align*}
\lim_{\varepsilon\to 0} \frac1\varepsilon\int^\varepsilon	_0\int_\Omega v(x,t)\varphi(x)\d x\d t  = \int_\Omega \Psi\varphi\d x.
\end{align*}
On the other hand, since $v\in C([0,T];L^{\beta+1}(\Omega))$, we see that also
\begin{align*}
\lim_{\varepsilon\to 0} \frac1\varepsilon\int^\varepsilon	_0\int_\Omega v(x,t)\varphi(x)\d x\d t  = \int_\Omega v(0)\varphi\d x
\end{align*}
holds. Since $\varphi\in C^\infty_0(\Omega)$ is arbitrary, it follows that $v(0)=\Psi$. 
\end{proof}

\end{document}